\definecolor{plum}{rgb}{0.44, 0.11, 0.11}%persianplum
\definecolor{red}{rgb}{0.8, 0.2, 0.2}%persianred
\definecolor{green}{rgb}{0.07, 0.21, 0.14}%phthalogreen
\definecolor{blue}{rgb}{0.0, 0.33, 0.71}%mediumtealblue
\newcommand{\e}{\varepsilon}
\newcommand{\f}{\varphi}
\newcommand{\Fc}{{\mathcal F}}
\newcommand{\Lc}{{\mathcal L}}
\newcommand{\Nc}{{\mathcal N}}
\newcommand{\N}{{\mathbb N}}
\newcommand{\Z}{{\mathbb Z}}
\newcommand{\R}{{\mathbb R}}
\newcommand{\C}{{\mathbb C}}
\newcommand{\br}[1]{\left\langle #1 \right\rangle}
\newcommand{\wh}{\widehat}
\newcommand{\wt}{\widetilde}
\newcommand{\wb}{\overline}
\DeclareMathOperator{\im}{{\rm im}}
\DeclareMathOperator{\supp}{{\rm supp}}
\DeclareMathOperator{\id}{{\rm id}}
\DeclareMathOperator{\Cay}{{\rm Cay}}
\DeclareMathOperator{\Prob}{{\bf P}}
\DeclareMathOperator{\E}{{\bf E}}
\DeclareMathOperator{\Hess}{{\rm Hess}}
\newcommand{\1}{{\bf 1}}
\newcommand{\mb}[1]{\boldsymbol{#1}}
\newcommand{\dr}[2]{\chi_#1(#2)} %drift; non-reversibility
\newcommand{\rad}{{\rm rad}} %spectral radius
\newcommand{\diag}{{\rm diag}}
\newcommand{\TV}{{\rm TV}}
\newcommand{\uth}{{\underline{\vartheta}}}
\numberwithin{equation}{section}
\newtheorem{theorem}{Theorem}[section]
\newtheorem{proposition}[theorem]{Proposition}
\newtheorem{lemma}[theorem]{Lemma}
\newtheorem{fact}[theorem]{Fact}
\newtheorem{facts}[theorem]{Facts}
\newtheorem{corollary}[theorem]{Corollary}
\theoremstyle{definition}
\newtheorem{remark}[theorem]{Remark}
\newtheorem{example}[theorem]{Example}
\begin{document}

\title[Noise sensitivity on virtually abelian groups]
      {Noise sensitivity on virtually abelian groups}
\author{J\'er\'emie Brieussel}
\author{Ryokichi Tanaka}
\date{\today}

\newcommand{\jb}{
\begin{flushleft}
\textsc{J\'er\'emie Brieussel}\\
Institut Montpellierain Alexander Grothendieck,\\
Universit\'e de Montpellier, CNRS, Montpellier, France\\
\textit{E-mail address}: {\tt jeremie.brieussel@umontpellier.fr}
\end{flushleft}
}

\newcommand{\rt}{
\begin{flushleft}
\textsc{Ryokichi Tanaka}\\
Department of Mathematics,\\
Kyoto University,
Kyoto, Japan\\
\textit{E-mail address}: {\tt rtanaka@math.kyoto-u.ac.jp}
\end{flushleft}
}

\maketitle

\begin{abstract}
We show that aperiodic random walks with finite second moment on virtually abelian groups are noise sensitive in total variation if and only if the group admits no nonzero homomorphism onto the infinite cyclic group.
\end{abstract}

\section{Introduction}\label{sec:intro}

Let $\Gamma$ be a countable group and $\mu$
be a probability measure on the group.
The $\mu$-\emph{random walk} 
$\{w_n\}_{n \in \N}$
starting from the identity $\id$ is defined by $w_n:=\gamma_1\cdots \gamma_n$ and $w_0:=\id$
for an independent, identically distributed sequence $\gamma_1, \gamma_2, \dots$ with the law $\mu$.
The distribution of $w_n$ coincides with the $n$-fold convolution $\mu_n:=\mu^{\ast n}$.
The \emph{noise sensitivity problem} on $\mu$-random walks asks the following.
For each fixed real $\rho \in (0, 1)$,
let $w_n^\rho:=\gamma^\rho_1\cdots \gamma^\rho_n$,
where $\gamma_i^\rho$ is resampled with the same law $\mu$ with probability $\rho$,
or is retained as $\gamma_i$ with probability $1-\rho$,
independently in every step.
Is the pair $w_n$ and $w_n^\rho$ nearly independent or highly correlated after a large enough time $n$?

More precisely, for each $\rho \in [0, 1]$,
let
\[
\pi^\rho:=\rho (\mu\times \mu)+(1-\rho)\mu_\diag \quad \text{on $\Gamma \times \Gamma$},
\]
where $\mu\times \mu$ denotes the product measure and $\mu_\diag((\gamma, \gamma')):=\mu(\gamma)$ if $\gamma=\gamma'$ and $0$ if else.
For a $\pi^\rho$-random walk $\{\mb{w}_n\}_{n \in \N}$ starting from $\id$ on $\Gamma\times \Gamma$,
the distribution $\pi^\rho_n$ coincides with that of the pair $(w_n, w_n^\rho)$.
We say that a $\mu$-random walk on $\Gamma$ is \emph{noise sensitive} in total variation (or, equivalently in $\ell^1$) if 
\[
\lim_{n \to \infty}\|\pi^\rho_n-\mu_n\times\mu_n\|_\TV=0
\quad \text{for each $\rho \in (0, 1]$}.
\]
Recall that the total variation distance between two probability measures $\nu_i$ for $i=1, 2$ on $\Gamma \times \Gamma$ is given by $\|\nu_1-\nu_2\|_\TV:=\max_{A \subset \Gamma\times \Gamma}|\nu_1(A)-\nu_2(A)|=(1/2)\|\nu_1-\nu_2\|_1$.

Roughly speaking,
if a $\mu$-random walk on $\Gamma$ is noise sensitive,
then even small $\rho$-portion of resampling in increments causes a significant effect,
producing an independent copy of the walk eventually.
The noise sensitivity problem on random walks on groups was introduced by Benjamini and the first named author in \cite{bb} (see also a related discussion in \cite[Section 3.3.4]{kalai}).
In that paper, the authors discuss other variants of the notion in terms of entropy and of different measurements for the distance.
In the present paper, we focus on noise sensitivity in total variation,
and thus we call it noise sensitivity for brevity.
A basic question one can ask: What groups admit noise sensitive random walks?

Both examples and non-examples of noise sensitive random walks have been discovered. 
On the one hand, there are two obstructions to noise sensitivity that apply to large families of groups~\cite[Theorem 1.1]{bb}. 
First, if $\Gamma$ admits a nonzero homomorphism onto $\Z$, then a $\mu$-random walk is not noise sensitive.
This follows from an inspection on the central limit theorem. 
Second, if $(\Gamma, \mu)$ is non-Liouville, i.e.\ $\Gamma$ admits a non-constant bounded $\mu$-harmonic functions,
then a $\mu$-random walk is not noise sensitive.
This follows by using the representations of bounded $\mu$-harmonic functions in terms of $\mu$-stationary distributions.
So far, these two results are the only known obstructions for noise sensitivity. 

On the other hand, there are some examples of noise-sensitive random walks. Besides finite groups which are obviously noise sensitive~\cite[Proposition 5.1]{bb}, the infinite dihedral group, and more generally every affine Weyl group admits some noise sensitive random walk  by~\cite[Theorem 1.1]{t-ns} (see also \cite[Theorem 1.4]{bb} for the dihedral group).
Note that these groups admit finite index abelian subgroups but do not admit nonzero homomorphisms onto the infinite cyclic group $\Z$ since they have a set of generators consisting of finite order elements.

The present paper deals with the class of finitely generated virtually abelian groups, i.e.\ groups which admit finite index abelian subgroups. 
Random walks on such groups are Liouville (see, e.g., \cite{fhtvf}).
We show that, in this class,
the existence of nonzero homomorphisms onto $\Z$ is the only obstruction to noise sensitivity for {\it aperiodic} random walks. This provides new examples of noise sensitive random walks, see Examples~\ref{ex:di} and~\ref{ex:tri} below.

Recall that $\mu$ is \emph{aperiodic} if there exist at least two positive probability random walk paths from $\id$ to $\id$ such that their lengths are mutually prime, e.g., if $\mu(\id)>0$
(cf.\ Section \ref{sec:char}). Also recall that given a word metric $|\cdot|$ in $\Gamma$, a probability measure $\mu$ has \emph{finite second moment} if
$\sum_{\gamma \in \Gamma}|\gamma|^2\mu(\gamma)<\infty$.
This condition is independent of the choice of word metrics since arbitrary such metrics are bi-Lipschitz equivalent.

\begin{theorem}\label{thm:intro}
Let $\Gamma$ be a finitely generated virtually abelian group and $\mu$ be an aperiodic probability measure on $\Gamma$ with finite second moment such that the support generates the group as a semigroup.
Then the $\mu$-random walk on $\Gamma$ is noise sensitive in total variation, i.e.\ 
\[
\left\|\pi^\rho_n-\mu_n\times \mu_n\right\|_\TV \to 0 \quad \text{as $n\to \infty$},
\]
for all $\rho \in (0, 1]$
if and only if $\Gamma$ does not admit nonzero homomorphisms onto $\Z$.
\end{theorem}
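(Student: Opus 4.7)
The forward implication of Theorem~\ref{thm:intro} is \cite[Theorem 1.1]{bb}, so the task is to prove the converse. My plan is a Fourier-analytic argument on a finite-index normal abelian subgroup, with the no-$\Z$-homomorphism hypothesis entering as the vanishing of a cross-covariance obstruction. I begin by reducing to the case where $\Gamma$ fits in a short exact sequence $1\to A\to\Gamma\to F\to 1$ with $A\cong\Z^d$ normal and $F$ finite. A short auxiliary lemma is needed to ensure that noise sensitivity and the hypotheses on $\mu$ pass between $\Gamma$ and a finite-index subgroup via an induced first-return measure, and to further arrange $\Gamma = A\rtimes F$ where possible (a nonsplit cocycle can be carried through transparently otherwise). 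The hypothesis $\mathrm{Hom}(\Gamma,\Z)=0$ translates to $(A\otimes_\Z\R)^F = 0$: any nonzero $F$-invariant vector in $\R^d$ would yield by duality a $\Gamma$-invariant linear form on $A$ which integrality of the $F$-action promotes to a nonzero surjection $\Gamma\twoheadrightarrow\Z$.

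The main technical tool is a $\chi$-twisted transfer operator. For each character $\chi\in\hat A\cong(\R/\Z)^d$, define $T(\chi):\ell^2(F)\to\ell^2(F)$ by
\[
(T(\chi)h)(f') \;=\; \sum_{(b,g)\in A\times F}\mu(b,g)\,\chi(f'\cdot b)\,h(f'g),
\]
where $f'\cdot b$ denotes the $F$-action on $A$. Fourier-decomposing $\ell^2(\Gamma)$ along $A$ encodes the $\chi$-fiber of $\mu_n$ as $T(\chi)^n$. At $\chi=0$, $T(0)$ is the transition operator of the aperiodic $\bar\mu$-walk on $F$ and converges exponentially to uniform. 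Standard Perron--Frobenius perturbation gives a dominant simple eigenvalue $\lambda(\chi) = 1 + i\,\alpha(\chi) - \tfrac12 Q(\chi) + O(|\chi|^3)$ near $0$, with $\alpha$ a linear form valued in $(\R^d)^F$ by $F$-equivariance. The hypothesis forces $\alpha\equiv 0$, while the second moment together with semigroup generation keep $Q$ positive definite. Away from $\chi=0$, aperiodicity yields a uniform spectral-radius gap $|\lambda(\chi)|\le 1-c$.

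For the joint walk on $\Gamma\times\Gamma$, the transfer operator on $\ell^2(F\times F)$ decomposes over $\hat A\times\hat A$ as $T^\rho(\chi_1,\chi_2) = \rho\,T(\chi_1)\otimes T(\chi_2) + (1-\rho)\,S(\chi_1,\chi_2)$, where $S$ arises from $\mu_\diag$ by twisting a single shared increment by $f_1\cdot\chi_1 + f_2\cdot\chi_2$ and advancing both coordinates by the same $g$. A parallel perturbation analysis yields a top eigenvalue $\lambda^\rho(\chi_1,\chi_2) = 1 - \tfrac12 Q^\rho(\chi_1,\chi_2) + O(|\chi|^3)$. The crucial claim is that $Q^\rho(\chi_1,\chi_2) = Q(\chi_1) + Q(\chi_2)$, i.e.\ the cross term in $\chi_1\chi_2$ vanishes: the contribution of $(1-\rho)S$ produces a bilinear pairing symmetrized by the $F$-action on both coordinates, and after the change of variable $h=f_1^{-1}f_2$ the coupled $F$-average collapses to the projector onto $(\R^d)^F = 0$. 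This is precisely the mechanism that fails on $\Z^d$, where a surviving cross term $(1-\rho)\chi_1^T\Sigma\chi_2$ blocks noise sensitivity. Combined with the Cauchy--Schwarz bound
\[
\|\pi^\rho_n - \mu_n\times\mu_n\|_\TV^2 \;\le\; \tfrac14\,|B_n|\cdot\|\pi^\rho_n-\mu_n\times\mu_n\|_2^2
\]
against an effective support of size $|B_n|=O(n^d)$ (local CLT on $A$ plus exponential mixing on $F$), and Plancherel on $\hat A\times\hat A$, the rescaling $\chi_j=\xi_j/\sqrt{n}$ and the cross-term cancellation yield $\|\pi^\rho_n-\mu_n\times\mu_n\|_2^2=o(n^{-d})$, enough to conclude $\TV$-convergence.

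The main obstacle I anticipate is making the second-order perturbation uniform as $(\chi_1,\chi_2)\to(0,0)$ and extracting the cross-term cancellation from the leading eigenvalues alone: $T^\rho$ and $T\otimes T$ have genuinely distinct subdominant spectra at $(0,0)$, so the contributions from the subdominant sector must be shown to be of lower order in $n$ after $n$-th powers and integration against $d\chi_1\,d\chi_2$. Handling the non-principal characters of $\hat A$ whose $F$-stabilizer is nontrivial is a secondary technical task, controlled by the uniform spectral gap of $T(\chi)$ together with the mixing of the $\bar\mu$-walk on $F$.
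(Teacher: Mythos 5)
Your overall architecture — Fourier analysis on the $\Z^d$-fiber, a family of twisted transfer operators $T(\chi)$, Perron--Frobenius perturbation near the trivial character, and the observation that $\mathrm{Hom}(\Gamma,\Z)=0$ is equivalent to $(\R^d)^F=0$ — is the same as the paper's, which works with the transfer operators $\Lc_{\wh v}$ on the quotient diagram $G=\Lambda\backslash\Cay(\Gamma,S)$ and with the transfer homomorphism $\vartheta:\Gamma\to\Lambda$ (whose normalized version $\uth$ is exactly the projector onto $(\R^d)^F$). The endgames differ: you propose an $\ell^2$/Plancherel bound against an effective support, whereas the paper proves a local CLT in total variation (Theorem~\ref{thm:lclt}), showing both $\pi^\rho_n$ and $\mu_n\times\mu_n$ converge in TV to the \emph{same} Gaussian $\Nc_{n,\Sigma,0}$ and then applying the triangle inequality. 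The paper's route is cleaner because you never have to separately control the effective support; the concentration inequality (Lemma~\ref{lem:martingale}) and Proposition~\ref{prop:lclt} do that once and for all.

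There is one genuine gap, and it sits at the heart of your argument: the cross-term cancellation $Q^\rho(\chi_1,\chi_2)=Q(\chi_1)+Q(\chi_2)$. You assert that after the change of variable $h=f_1^{-1}f_2$ the coupled $F$-average collapses to the projector onto $(\R^d)^F$. This is the right intuition, but as written it is not a proof. The issue is that the second-order coefficient $Q^\rho$ is the Hessian of the top eigenvalue $\lambda^\rho$, and that eigenvalue is not a raw $F$-average of the symbol of the transfer operator; it mixes the defect term $d\f$ (the non-harmonic part of the potential) into the expansion. The paper resolves this by introducing the harmonic $1$-form decomposition $\wh v=u+df$ (Lemma~\ref{lem:decomposition}), proving that the eigenvalue depends only on the harmonic part $u$, computing the Hessian in terms of $u$ and the conductance $c$ (Lemma~\ref{lem:hessian}), and then showing that $\sum_{x}u(x,s)=\langle v,\vartheta(s)\rangle$ (Lemma~\ref{lem:diagonal}), which vanishes for $v\perp\im(\vartheta)$ — i.e.\ for \emph{all} $v$ when $\mathrm{Hom}(\Gamma,\Z)=0$. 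Finally, the product structure of the quotient diagram is used (Fact~\ref{fact:u1} and Theorem~\ref{thm:structure}) to factor the bilinear form and expose the vanishing of the off-diagonal block. Without this machinery (or an equivalent), your claim that the cross term vanishes is an assertion about the eigenvalue perturbation that you would need to establish directly; the naive $F$-average of the twisted operator is not what appears in the Hessian of $\lambda^\rho$.

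Two secondary remarks. First, the detour through an ``induced first-return measure'' on a finite-index subgroup is unnecessary and, as stated, unjustified: the paper works directly with the short exact sequence $1\to\Lambda\to\Gamma\to F\to 1$ without changing the step distribution, and it is not clear that noise sensitivity of $\mu$ on $\Gamma$ transfers back and forth under a first-return transformation (the first-return walk is a different Markov chain, and the noising operation does not commute with inducing). Second, you correctly flag the subdominant-spectrum concern as the main technical obstacle; the paper handles this cleanly by proving the uniform exponential decay $\sup_{v\in D\setminus D_\delta}|\Lc_{\wh v}^n\1_x(x_0)|\le C_\delta e^{-c_\delta n}$ (Lemma~\ref{lem:aperiodic}) via aperiodicity, and by isolating the dominant eigenprojection in the perturbative regime, so the subdominant part contributes only an $O(e^{-r_\delta n})$ error after $n$-th powers.
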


This Theorem \ref{thm:intro} generalizes the known result on affine Weyl groups with $\mu$ supported on the canonical set of generators and the identity, established by the second named author \cite[Theorem 1.1]{t-ns}.
%Note that $(\Gamma, \mu)$ is Liouville in Theorem \ref{thm:intro}.
If the group $\Gamma$ admits a nonzero homomorphism onto $\Z$,
then the asymptotic behavior of the total variation distance depends on the parameter $\rho$.

\begin{theorem}\label{thm:nonzerohom_intro}
Let $\Gamma$ be a finitely generated virtually abelian group and $\mu$ be a finite second moment probability measure on $\Gamma$ such that the support generates the group as a semigroup.
Then
\[
\lim_{\rho \to 1}\limsup_{n \to \infty}\left\|\pi^\rho_n-\mu_n\times \mu_n\right\|_\TV=0. 
\]
If moreover $\Gamma$ admits a nonzero homomorphism onto $\Z$,
then
\[%\quad \text{and} \quad
\lim_{\rho \to 0}\liminf_{n \to \infty}\left\|\pi^\rho_n-\mu_n\times \mu_n\right\|_\TV=1,
\]
in particular, the $\mu$-random walk on $\Gamma$ is not noise sensitive in total variation.
\end{theorem}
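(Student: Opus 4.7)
The plan is to handle the two displayed limits separately. For the $\rho \to 0$ lower bound (assuming a nonzero homomorphism $\phi \colon \Gamma \to \Z$), I would push forward through the map $f(g, g') := \phi(g) - \phi(g')$ and exhibit an event distinguishing $\pi_n^\rho$ from $\mu_n \times \mu_n$. Setting $\sigma^2 := \Var_\mu(\phi) > 0$ (positive because $\phi$ is nonzero and $\supp \mu$ generates $\Gamma$), the classical CLT on $\Z$ gives $f/\sqrt n \Rightarrow \Nc(0, 2\sigma^2)$ under $\mu_n \times \mu_n$, while under $\pi_n^\rho$ the $i$-th increment $\phi(\gamma_i) - \phi(\gamma_i^\rho)$ vanishes on retained steps (probability $1-\rho$) and has variance $2\sigma^2$ on resampled ones, so $f/\sqrt n \Rightarrow \Nc(0, 2\rho\sigma^2)$. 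Taking $A_\rho := \{|f| \leq \rho^{1/4}\sqrt n\}$ and using that total variation contracts under pushforwards, I would conclude
\[
\liminf_{n \to \infty} \|\pi_n^\rho - \mu_n \times \mu_n\|_\TV \geq \Prob\bigl(|Z| \leq 1/(\rho^{1/4}\sqrt{2\sigma^2})\bigr) - \Prob\bigl(|Z| \leq \rho^{1/4}/\sqrt{2\sigma^2}\bigr)
\]
for $Z \sim \Nc(0,1)$, and the right-hand side tends to $1$ as $\rho \to 0$.

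For the $\rho \to 1$ upper bound I would reduce to the abelian case by fixing a normal finite index abelian subgroup $H \leq \Gamma$ and projecting the joint step to the free abelian part of $H$, identified with $\R^d$. The projected joint step has covariance matrix
\[
\Sigma_\rho = \begin{pmatrix} \Sigma & (1-\rho)\Sigma \\ (1-\rho)\Sigma & \Sigma \end{pmatrix},
\]
where $\Sigma$ is the covariance of the projected $\mu$-step; the off-diagonal block is $(1-\rho)\Sigma$ because retained steps (probability $1-\rho$) contribute cross covariance $\Sigma$ while resampled steps contribute zero. A multivariate local CLT on virtually abelian groups then yields that $\pi_n^\rho$ and $\mu_n \times \mu_n$ are each close in total variation to Gaussian approximations with covariances $n\Sigma_\rho$ and $n(\Sigma \oplus \Sigma)$. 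Since the total variation distance between two Gaussians on $\R^{2d}$ depends continuously on the covariance matrices and vanishes when they coincide, passing $\rho \to 1$ gives $\limsup_n \|\pi_n^\rho - \mu_n \times \mu_n\|_\TV \to 0$.

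The hard part will be the local CLT in the upper bound, because the present theorem omits the aperiodicity hypothesis of Theorem \ref{thm:intro}, and the classical lattice local CLT on $\Z^d$ requires aperiodicity. My plan would be to decompose the walk according to its position in the finite quotient $\Gamma/H$ and apply an aperiodic local CLT coset by coset, exploiting that the diagonal component of $\pi^\rho$ drives mixing between cosets and making the Gaussian approximation uniform in $\rho$ near $1$.
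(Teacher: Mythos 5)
Your treatment of the $\rho\to 0$ direction is correct and is a more elementary variant of the paper's argument. The paper (Corollary~\ref{thm:nonzerohom}) pushes forward through $\vartheta\times\vartheta$ to the abelian group $\im(\vartheta)\times\im(\vartheta)$ and invokes a cited result on free abelian groups, whereas you push all the way to $\Z$ via $f(g,g')=\phi(g)-\phi(g')$ and close with the classical central limit theorem, which needs only a finite second moment and no aperiodicity. The threshold $\rho^{1/4}\sqrt n$ is a clean choice and the resulting lower bound does tend to $1$ as $\rho\to 0$.

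The $\rho\to 1$ direction (the paper's Corollary~\ref{cor:rho1}) has two concrete gaps in your plan. First, the claimed off-diagonal block $(1-\rho)\Sigma$ for $\Sigma_\rho$ is wrong when $\Gamma$ is not abelian, and the reasoning that produces it is the source of the error. Since $\Phi$ is $\Lambda$-equivariant but not a group homomorphism, $\Phi(w_n)$ is a Markov additive process, not a random walk on $\Z^m$; the covariance appearing in the local CLT is the Hessian of the leading eigenvalue of the transfer operator (Lemma~\ref{lem:hessian}), expressed through harmonic $1$-forms on the quotient diagram, and is \emph{not} the covariance of the single projected step. Proposition~\ref{thm:structurepi} shows that even in the semi-direct product case the off-diagonal block equals $(1-\rho)\uth^{\ast}\Sigma_\mu\uth$: only the part of $\Sigma_\mu$ supported on $\im(\uth)\times\im(\uth)$ carries the factor $1-\rho$, the rest vanishing identically. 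In the non-split case the paper gives no closed formula; it only establishes the continuity $\rho\mapsto\Sigma_\rho$ via Remark~\ref{rem:continuity}, which is precisely what you need for $\Sigma_\rho\to\Sigma\oplus\Sigma$. Your end conclusion is correct but your computation does not establish it.

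Second, the periodicity issue is real but your proposed fix does not address it. The period $q$ of $\mu$ is the gcd of lengths of loops at the identity in the Cayley diagram; it is not governed by the coset structure $\Gamma/H$, and the ``diagonal component of $\pi^\rho$ driving mixing between cosets'' has no bearing on it. The observation used in the paper is simply that $\supp\pi^\rho=\supp\mu\times\supp\mu$ for every $\rho\in(0,1]$, so $\pi^\rho$ and $\mu\times\mu$ have the \emph{same} period $q$ and occupy the same support at each time $n$; moreover the supports of $\pi^\rho_{qn+i}$, $i=0,\dots,q-1$, are pairwise disjoint, so the individual total-variation distances add up to the total-variation distance of the averaged measures $q^{-1}\sum_i\pi^\rho_{qn+i}$, which is exactly what the periodic local CLT (Theorem~\ref{thm:lclt}) controls. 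You should replace the coset-mixing heuristic with this.
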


Note that in the above Theorem \ref{thm:nonzerohom_intro}, the probability measure $\mu$ is not necessarily aperiodic.
%In fact, we will establish strengthened versions of Theorems \ref{thm:intro} and \ref{thm:nonzerohom_intro} under the finite second moment condition in Theorems \ref{thm:zerohom} and \ref{thm:nonzerohom} respectively in Section \ref{sec:ns}.
In the course of proving Theorem \ref{thm:intro},
we observe the following more general ``decoupling'' phenomenon on a product group $\Gamma_1 \times \Gamma_2$, where $\Gamma_1$ and $\Gamma_2$ may not be isomorphic.
Two coupled random walks on virtually abelian groups $\Gamma_1$ and $\Gamma_2$, potentially highly correlated though, eventually behave as two independent random walks if one of the factors admits no nonzero homomorphism onto $\Z$.

\begin{theorem}\label{thm:zerohom_intro}
For $i=1, 2$, let $\Gamma_i$ be a finitely generated virtually abelian group,
and $\nu$ be an aperiodic probability measure with finite second moment on $\Gamma_1\times \Gamma_2$ such that the support of $\nu$ generates $\Gamma_1\times \Gamma_2$ as a semigroup and $\nu$ has the marginal $\mu^{(i)}$ on $\Gamma_i$.
If $\Gamma_1$ admits no nonzero homomorphism onto $\Z$,
then 
\[
\big\|\nu_n-\mu^{(1)}_n\times \mu^{(2)}_n\big\|_\TV \to 0 \quad \text{as $n \to \infty$}.
\]
\end{theorem}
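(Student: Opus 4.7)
My plan is to reduce the theorem to a Gaussian comparison via the local central limit theorem (LCLT) on virtually abelian groups, and then show that the limiting Gaussian of $\nu_n$ coincides with the product of the limiting Gaussians of $\mu_n^{(1)}$ and $\mu_n^{(2)}$. First I would pass to a torsion-free, finite-index, normal abelian subgroup of product form $A = A_1 \times A_2 \triangleleft \Gamma_1 \times \Gamma_2$ with finite quotient $F = F_1 \times F_2$. Setting $V_i = A_i \otimes \R$, the hypothesis that $\Gamma_1$ admits no nonzero homomorphism onto $\Z$ is equivalent to $V_1^{F_1} = 0$ (equivalently, the Reynolds projector $P_1 = \frac{1}{|F_1|}\sum_{x \in F_1} x$ vanishes), and under aperiodicity this means the one-step averaging operator $R_1 \in \mathrm{End}(V_1)$ of the walk on $F_1$ has spectral radius strictly less than $1$.

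Writing $w_n = b_n X_n$ with $b_n \in A$, $X_n \in F$ via a set-theoretic section, one has $b_n = \sum_{i=1}^n X_{i-1} \cdot a_i$, where $a_i \in A$ is the $A$-component of step $i$. The LCLT approximates $\nu_n$ in $\|\cdot\|_\TV$ by a discrete Gaussian on $A$-cosets with drift $nm$ and covariance $n\Sigma$, and likewise each $\mu_n^{(i)}$ by a Gaussian with drift $nm^{(i)}$ and covariance $n\Sigma^{(i)}$. Since the marginals of $\nu$ are $\mu^{(i)}$, we have $m = (m^{(1)}, m^{(2)})$ and $\Sigma$ has diagonal blocks $\Sigma^{(i)}$; moreover $m^{(1)} \in V_1^{F_1} = 0$. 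The theorem thus reduces to showing the cross-block $\Sigma^{12}$ vanishes asymptotically, i.e.\ $\mathrm{Cov}(b_n^{(1)}, b_n^{(2)}) = o(n)$.

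The key step is this cross-covariance estimate. Splitting $\sum_{i,j}\mathrm{Cov}(X_{i-1}^{(1)}a_i^{(1)}, X_{j-1}^{(2)}a_j^{(2)})$ into $i = j$, $i > j$, and $i < j$ contributions, each reduces by conditioning to operator expressions involving $R_1$, $R_2 \in \mathrm{End}(V_i)$ and $E = \E[x^{(1)} \otimes x^{(2)}] \in \mathrm{End}(V_1 \otimes V_2)$, applied to tensors built from $\alpha^{(i)} = \E[a^{(i)}]$; in particular $\E[X_{i-1}^{(1)} \otimes X_{i-1}^{(2)}] = E^{i-1}$, and the $i < j$ contribution takes the closed form $(E^{i-1} - R_1^{i-1} \otimes R_2^{i-1})(\alpha^{(1)} \otimes R_2^{j-i}\alpha^{(2)})$ up to a small lower-order correction. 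The key structural observation is that $E$ and $R_1 \otimes R_2$ act identically on $V_1 \otimes V_2^{F_2}$ (both as $R_1 \otimes \id$), so the operator difference vanishes on that subspace; combined with $\|R_2^k - P_2\| = O(\mu_2^k)$ from aperiodicity of the $F_2$-walk, the cross-covariance of pair $(i,j)$ admits a product decay $O(\lambda'^{\min(i,j)-1} \mu_2^{|i-j|})$ which sums to $O(1)$. Therefore $\mathrm{Cov}(b_n^{(1)}, b_n^{(2)}) = O(1) = o(n)$, the limiting covariance is block-diagonal, the joint Gaussian factors as the product of marginal Gaussians, and a triangle inequality via the LCLT yields $\|\nu_n - \mu_n^{(1)} \times \mu_n^{(2)}\|_\TV \to 0$.

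The main obstacle is this sharpening of the cross-covariance estimate: a naive Cauchy--Schwarz bound on individual pairs gives only $O(n)$, insufficient for CLT scaling. The finer bound requires both the spectral gap of $R_1$ (from the hypothesis $V_1^{F_1} = 0$) and the structural cancellation on $V_1 \otimes V_2^{F_2}$, which together yield independent decay in both $i$ and $|i-j|$ and render the double sum bounded.
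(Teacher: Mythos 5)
Your plan follows the same high-level scheme as the paper (reduce to a Gaussian comparison via the local CLT and show the cross-covariance block vanishes), but the mechanism by which you identify the vanishing is genuinely different: where the paper computes $\Sigma_\nu^{12}$ exactly via the explicit harmonic $1$-form formula of Theorem~\ref{thm:lclt} and Lemma~\ref{lem:diagonal}, you estimate $\mathrm{Cov}(\Phi_1(w_n),\Phi_2(w_n))$ probabilistically term by term and prove it is $O(1)$, using the spectral gap of the averaging operator $R_1$ (which you correctly derive from aperiodicity together with $V_1^{F_1}=0$), the exponential mixing of the $F_2$-chain, and the observation that $E$ and $R_1\otimes R_2$ agree on $V_1\otimes V_2^{F_2}$. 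That observation is a reasonable probabilistic analogue of Lemma~\ref{lem:diagonal}, and your exponential decay in both $\min(i,j)$ and $|i-j|$ is the right shape.

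However, there is a real gap in the key formula $b_n=\sum_{i=1}^n X_{i-1}\cdot a_i$. This identity holds only if the extension $1\to\Lambda\to\Gamma\to F\to1$ splits, i.e.\ if $\Gamma=\Lambda\rtimes F$; the paper explicitly disclaims this assumption (see the remark just after \eqref{eq:exactseq}). For a set-theoretic section $\sigma$, the correct identity is $\Phi(w_n)=\sum_i \alpha([w_{i-1}],\gamma_i)$, and writing $\gamma_i=a_i\sigma(f_i)$ unpacks this as $\sum_i \mathrm{Ad}(X_{i-1})a_i+\sum_i c(X_{i-1},f_i)$ for the group $2$-cocycle $c$; the second sum is a nontrivial $O(n)$-scale contribution that your decomposition drops. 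There are virtually abelian groups with no nonzero homomorphism to $\Z$ for which the extension does not split (e.g.\ Bieberbach groups with finite abelianization and torsion-free $\Gamma$), so this case cannot be ignored. Your argument can be repaired by replacing $X_{i-1}\cdot a_i$ throughout with the cocycle term $\alpha([w_{i-1}],\gamma_i)$: the decay in $|i-j|$ still comes from the mixing of the quotient chain, while the decay in $\min(i,j)$ then comes from the fact that the stationary mean of $x\mapsto\sum_s\mu^{(1)}(s)\alpha_1(x,s)$ is $\frac{1}{\#F_1}\vartheta_1(\cdot)=0$, which is exactly where the hypothesis ``no nonzero homomorphism to $\Z$'' enters — this is the same input that drives Lemma~\ref{lem:diagonal}. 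Two further points worth making explicit: (i) you tacitly assume both $\Gamma_i$ are infinite (since you pass to $V_i$), whereas the theorem also covers finite factors, which need a separate and easy argument; and (ii) passing from $\mathrm{Cov}(\Phi_1(w_n),\Phi_2(w_n))=o(n)$ to the identification of the cross-block of the LCLT covariance $\Sigma_\nu$ requires a moment-convergence argument (e.g.\ uniform integrability from the finite second moment and the martingale concentration of Lemma~\ref{lem:martingale}), which should be stated rather than assumed.
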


We apply this result to $\pi^\rho$ for $\rho\in (0, 1]$ and deduce noise sensitivity for $\mu$-random walks on groups $\Gamma$ with no nonzero homomorphisms to $\Z$.

Before we describe the outlines of the proofs, we discuss some explicit examples.

\begin{example}\label{ex:di}
Let us consider the infinite dihedral group 
\[
D_\infty:=\br{s_1, s_2 \mid s_1^2=s_2^2=\id}.
\]
The group $D_\infty$ is isomorphic to the semi-direct product $\Z\rtimes (\Z/2)$. It does not admit nonzero homomorphisms onto $\Z$ since it is generated by two involutions $s_1$ and $s_2$.
Let $\mu_{\rm LSRW}$ be the uniform distribution on $\{s_1, s_2, \id\}$.
Then, the $\mu_{\rm LSRW}$-random walk (``lazy simple random walk'') on $D_\infty$ is noise sensitive by Theorem \ref{thm:intro}.
The noise sensitivity of this special random walk on $D_\infty$ has already been proven by a coupling method in \cite[Section 6]{bb}.
Let us provide another example.
Let $\mu_{\rm ape}$ be the uniform distribution on $\{s_1, s_2, s_1s_2, s_2s_1\}$.
Then, $\mu_{\rm ape}$ is aperiodic.
This follows since the $\mu_{\rm ape}$-random walk starting from $\id$ returns to $\id$ with positive probability at times $2$ and $3$.
Theorem \ref{thm:intro} implies that the $\mu_{\rm ape}$-random walk on $D_\infty$ is noise sensitive.
\end{example}

In the above Example \ref{ex:di}, 
if $\mu_{\rm SRW}$ denotes the uniform distribution on $\{s_1, s_2\}$,
then the $\mu_{\rm SRW}$-random walk (``simple random walk'') on $D_\infty$ is not noise sensitive.
Indeed, the support of $(\mu_{\rm SRW})^{\ast 2}$ is included in $\Z$ under the identification $D_\infty=\Z\rtimes (\Z/2)$, and $(\mu_{\rm SRW})^{\ast 2}$ defines the lazy simple random walk on $\Z$ with the holding probability $1/2$. 
Furthermore, for each $\rho \in (0, 1]$,
and for the probability measure $\pi^\rho$, the support of $(\pi^\rho)^{\ast 2}$ is included in the subgroup $\Z^2$ of $D_\infty^2$ under the identification $D_\infty^2=\Z^2\rtimes (\Z/2)^2$.
If the $\mu_{\rm SRW}$-random walk on $D_\infty$ were noise sensitive,
then we would have 
\[
\|\pi^\rho_{2n}-(\mu_{\rm SRW})_{2n}\times (\mu_{\rm SRW})_{2n}\|_\TV \to 0 \quad \text{
as $n \to \infty$}.
\]
However, this does not hold for the $(\pi^\rho)^{\ast 2}$-random walk on $\Z^2$: the total variation distance does not tend to $0$ for $\rho$ close enough to $0$.
Indeed, the $(\pi^\rho)^{\ast 2}$-random walk on $\Z$ is not noise sensitive, which follows from the proof of Theorem \ref{thm:nonzerohom_intro}.
% (cf.\ Remark \ref{rem:continuity}).
This in particular indicates that the property of being noise sensitive does depend on the random walk (not solely on the group), as already pointed out in \cite[Theorem 1.4]{bb}.

\begin{figure}[h]
\centering
%%begin%%tikz%%%%%%%%%%%%%%%%%%%%%%%%%%%%%%%%%%%%%%%%%%%

\scalebox{0.8}[0.8]{
\begin{tikzpicture}[
dot/.style={draw, coordinate},
ver/.style={draw, circle, scale=0.4, fill=black},
]
%nodes

\node[dot] at (-8,{-4*sqrt(3)}) (-8,{-4*sqrt(3)}) {};
\node[dot] at (-4,{-4*sqrt(3)}) (-4,{-4*sqrt(3)}) {};
\node[dot] at (0,{-4*sqrt(3)}) (0,{-4*sqrt(3)}) {};
\node[dot] at (4,{-4*sqrt(3)}) (4,{-4*sqrt(3)}) {};
\node[dot] at (8,{-4*sqrt(3)}) (8,{-4*sqrt(3)}) {};

\node[dot] at (-6, {-2*sqrt(3)}) (-6, {-2*sqrt(3)}) {};
\node[dot] at (-2, {-2*sqrt(3)}) (-2, {-2*sqrt(3)}) {};
\node[dot] at (2, {-2*sqrt(3)}) (2, {-2*sqrt(3)}) {};
\node[dot] at (6, {-2*sqrt(3)}) (6, {-2*sqrt(3)}) {};
\node[dot] at (10, {-2*sqrt(3)}) (10, {-2*sqrt(3)}) {};

\node[dot] at (-8,0) (-8,0) {};
\node[dot] at (-4,0) (-4,0) {};
\node[dot] at (0,0) (0,0) {};
\node[dot] at (4,0) (4,0) {};
\node[dot] at (8,0) (8,0) {};

\node[dot] at (-6, {2*sqrt(3)}) (-6, {2*sqrt(3)}) {};
\node[dot] at (-2, {2*sqrt(3)}) (-2, {2*sqrt(3)}) {};
\node[dot] at (2, {2*sqrt(3)}) (2, {2*sqrt(3)}) {};
\node[dot] at (6, {2*sqrt(3)}) (6, {2*sqrt(3)}) {};
\node[dot] at (10, {2*sqrt(3)}) (10, {2*sqrt(3)}) {};

\node[dot] at (-8,{4*sqrt(3)}) (-8,{4*sqrt(3)}) {};
\node[dot] at (-4,{4*sqrt(3)}) (-4,{4*sqrt(3)}) {};
\node[dot] at (0,{4*sqrt(3)}) (0,{4*sqrt(3)}) {};
\node[dot] at (4,{4*sqrt(3)}) (4,{4*sqrt(3)}) {};
\node[dot] at (8,{4*sqrt(3)}) (8,{4*sqrt(3)}) {};

\draw[dotted, fill=gray!20] (4,0)--(2,{2*sqrt(3)})--(-2,{2*sqrt(3)})--(-4,0)--(-2,{-2*sqrt(3)})--(2,{-2*sqrt(3)})--cycle;
\draw[dotted, fill=gray!50] (2,{2*sqrt(3)})--(-2,{2*sqrt(3)})--(0,0)--cycle;
\draw[dotted, fill=gray!100] (0,0)--(4,0)--(2,{2*sqrt(3)})--cycle;

%paths
%horizontal

\draw[dotted] (-8,{-4*sqrt(3)})--(-4,{-4*sqrt(3)});
\draw[dotted] (-4,{-4*sqrt(3)})--(0,{-4*sqrt(3)});
\draw[dotted] (0,{-4*sqrt(3)})--(4,{-4*sqrt(3)});
\draw[dotted] (4,{-4*sqrt(3)})--(8,{-4*sqrt(3)});

\draw[dotted] (-8, {-2*sqrt(3)})--(-6, {-2*sqrt(3)});
\draw[dotted] (-6, {-2*sqrt(3)})--(-2, {-2*sqrt(3)});
\draw[dotted] (-2, {-2*sqrt(3)})--(2, {-2*sqrt(3)});
\draw[dotted] (2, {-2*sqrt(3)})--(6, {-2*sqrt(3)});
\draw[dotted] (6, {-2*sqrt(3)})--(8, {-2*sqrt(3)});

\draw[dotted] (-8,0)--(-4,0);
\draw[dotted] (-4,0)--(0,0);
\draw[dotted] (0,0)--(4,0);
\draw[dotted] (4,0)--(8,0);

\draw[dotted] (-8, {2*sqrt(3)})--(-6, {2*sqrt(3)});
\draw[dotted] (-6, {2*sqrt(3)})--(-2, {2*sqrt(3)});
\draw[dotted] (-2, {2*sqrt(3)})--(2, {2*sqrt(3)});
\draw[dotted] (2, {2*sqrt(3)})--(6, {2*sqrt(3)});
\draw[dotted] (6, {2*sqrt(3)})--(8, {2*sqrt(3)});

\draw[dotted] (-8,{4*sqrt(3)})--(-4,{4*sqrt(3)});
\draw[dotted] (-4,{4*sqrt(3)})--(0,{4*sqrt(3)});
\draw[dotted] (0,{4*sqrt(3)})--(4,{4*sqrt(3)});
\draw[dotted] (4,{4*sqrt(3)})--(8,{4*sqrt(3)});

%cross

\draw[dotted] (-8,{-4*sqrt(3)})--(-6, {-2*sqrt(3)});
\draw[dotted] (-4,{-4*sqrt(3)})--(-6, {-2*sqrt(3)});
\draw[dotted] (-4,{-4*sqrt(3)})--(-2, {-2*sqrt(3)});
\draw[dotted] (0,{-4*sqrt(3)})--(-2, {-2*sqrt(3)});
\draw[dotted] (0,{-4*sqrt(3)})--(2, {-2*sqrt(3)});
\draw[dotted] (4,{-4*sqrt(3)})--(2, {-2*sqrt(3)});
\draw[dotted] (4,{-4*sqrt(3)})--(6, {-2*sqrt(3)});
\draw[dotted] (8,{-4*sqrt(3)})--(6, {-2*sqrt(3)});

\draw[dotted] (-8,0)--(-6, {-2*sqrt(3)});
\draw[dotted] (-4,0)--(-6, {-2*sqrt(3)});
\draw[dotted] (-4,0)--(-2, {-2*sqrt(3)});
\draw[dotted] (0,0)--(-2, {-2*sqrt(3)});
\draw[dotted] (0,0)--(2, {-2*sqrt(3)});
\draw[dotted] (4,0)--(2, {-2*sqrt(3)});
\draw[dotted] (4,0)--(6, {-2*sqrt(3)});
\draw[dotted] (8,0)--(6, {-2*sqrt(3)});

\draw[dotted] (-8,0)--(-6, {2*sqrt(3)});
\draw[dotted] (-4,0)--(-6, {2*sqrt(3)});
\draw[dotted] (-4,0)--(-2, {2*sqrt(3)});
\draw[dotted] (0,0)--(-2, {2*sqrt(3)});
\draw[dotted] (0,0)--(2, {2*sqrt(3)});
\draw[dotted] (4,0)--(2, {2*sqrt(3)});
\draw[dotted] (4,0)--(6, {2*sqrt(3)});
\draw[dotted] (8,0)--(6, {2*sqrt(3)});

\draw[dotted] (-8,{4*sqrt(3)})--(-6, {2*sqrt(3)});
\draw[dotted] (-4,{4*sqrt(3)})--(-6, {2*sqrt(3)});
\draw[dotted] (-4,{4*sqrt(3)})--(-2, {2*sqrt(3)});
\draw[dotted] (0,{4*sqrt(3)})--(-2, {2*sqrt(3)});
\draw[dotted] (0,{4*sqrt(3)})--(2, {2*sqrt(3)});
\draw[dotted] (4,{4*sqrt(3)})--(2, {2*sqrt(3)});
\draw[dotted] (4,{4*sqrt(3)})--(6, {2*sqrt(3)});
\draw[dotted] (8,{4*sqrt(3)})--(6, {2*sqrt(3)});

%Cayley graph
%edges

%horizontal

\draw[latex-, color = plum, line width=0.8mm] (-8, {(2/3)*sqrt(3)})--(-6, {(2/3)*sqrt(3)});
\draw[latex-, color = green, line width=0.8mm] (-4, {(2/3)*sqrt(3)})--(-2, {(2/3)*sqrt(3)});
\draw[color = green, line width=0.8mm] (-6, {(2/3)*sqrt(3)})--(-2, {(2/3)*sqrt(3)});
\draw[latex-, color = red, line width=0.8mm] (0, {(2/3)*sqrt(3)})--(2, {(2/3)*sqrt(3)});
\draw[color = red, line width=0.8mm] (-2, {(2/3)*sqrt(3)})--(2, {(2/3)*sqrt(3)}) node[pos=0.5, below=0.1mm] {$s_1$};
\draw[latex-, color = plum, line width=0.8mm] (4, {(2/3)*sqrt(3)})--(6, {(2/3)*sqrt(3)});
\draw[color = plum, line width=0.8mm] (2, {(2/3)*sqrt(3)})--(6, {(2/3)*sqrt(3)}) node[pos=0.5, below=0.1mm] {$s_2$};
\draw[color = green, line width=0.8mm] (6, {(2/3)*sqrt(3)})--(8, {(2/3)*sqrt(3)});

\draw[latex-, color = red, line width=0.8mm] (-6, {(8/3)*sqrt(3)})--(-4, {(8/3)*sqrt(3)});
\draw[color = red, line width=0.8mm] (-8, {(8/3)*sqrt(3)})--(-4, {(8/3)*sqrt(3)});
\draw[latex-, color = plum, line width=0.8mm] (-2, {(8/3)*sqrt(3)})--(0, {(8/3)*sqrt(3)});
\draw[color = plum, line width=0.8mm] (-4, {(8/3)*sqrt(3)})--(0, {(8/3)*sqrt(3)});
\draw[latex-, color = green, line width=0.8mm] (2, {(8/3)*sqrt(3)})--(4, {(8/3)*sqrt(3)});
\draw[color = green, line width=0.8mm] (0, {(8/3)*sqrt(3)})--(4, {(8/3)*sqrt(3)})node[pos=0.5, below=1.0mm] {$s_3$};
\draw[latex-, color = red, line width=0.8mm] (6, {(8/3)*sqrt(3)})--(8, {(8/3)*sqrt(3)});
\draw[color = red, line width=0.8mm] (4, {(8/3)*sqrt(3)})--(8, {(8/3)*sqrt(3)});

\draw[latex-, color = red, line width=0.8mm] (-6, {-(4/3)*sqrt(3)})--(-4, {-(4/3)*sqrt(3)});
\draw[color = red, line width=0.8mm] (-8, {-(4/3)*sqrt(3)})--(-4, {-(4/3)*sqrt(3)});
\draw[latex-, color = plum, line width=0.8mm] (-2, {-(4/3)*sqrt(3)})--(0, {-(4/3)*sqrt(3)});
\draw[color = plum, line width=0.8mm] (-4, {-(4/3)*sqrt(3)})--(0, {-(4/3)*sqrt(3)});
\draw[latex-, color = green, line width=0.8mm] (2, {-(4/3)*sqrt(3)})--(4, {-(4/3)*sqrt(3)});
\draw[color = green, line width=0.8mm] (0, {-(4/3)*sqrt(3)})--(4, {-(4/3)*sqrt(3)});
\draw[latex-, color = red, line width=0.8mm] (6, {-(4/3)*sqrt(3)})--(8, {-(4/3)*sqrt(3)});
\draw[color = red, line width=0.8mm] (4, {-(4/3)*sqrt(3)})--(8, {-(4/3)*sqrt(3)});

\draw[latex-, color = plum, line width=0.8mm] (-8, {-(10/3)*sqrt(3)})--(-6, {-(10/3)*sqrt(3)});
\draw[color = plum, line width=0.8mm] (-8, {-(10/3)*sqrt(3)})--(-6, {-(10/3)*sqrt(3)});
\draw[latex-, color = green, line width=0.8mm] (-4, {-(10/3)*sqrt(3)})--(-2, {-(10/3)*sqrt(3)});
\draw[color = green, line width=0.8mm] (-6, {-(10/3)*sqrt(3)})--(-2, {-(10/3)*sqrt(3)});
\draw[latex-, color = red, line width=0.8mm] (0, {-(10/3)*sqrt(3)})--(2, {-(10/3)*sqrt(3)});
\draw[color = red, line width=0.8mm] (-2, {-(10/3)*sqrt(3)})--(2, {-(10/3)*sqrt(3)});
\draw[latex-, color = plum, line width=0.8mm] (4, {-(10/3)*sqrt(3)})--(6, {-(10/3)*sqrt(3)});
\draw[color = plum, line width=0.8mm] (2, {-(10/3)*sqrt(3)})--(6, {-(10/3)*sqrt(3)});
\draw[color = green, line width=0.8mm] (6, {-(10/3)*sqrt(3)})--(8, {-(10/3)*sqrt(3)});

%cross

\draw[-latex, color = plum, line width=0.8mm] (-8, {(8/3)*sqrt(3)})--(-7, {(11/3)*sqrt(3)});
\draw[color = green, line width=0.8mm] (-5, {(11/3)*sqrt(3)})--(-4, {(8/3)*sqrt(3)});
\draw[-latex, color = green, line width=0.8mm] (-4, {(8/3)*sqrt(3)})--(-3, {(11/3)*sqrt(3)});
\draw[color = red, line width=0.8mm] (-1, {(11/3)*sqrt(3)})--(0, {(8/3)*sqrt(3)});
\draw[-latex, color = red, line width=0.8mm] (0, {(8/3)*sqrt(3)})--(1, {(11/3)*sqrt(3)});
\draw[color = plum, line width=0.8mm] (3, {(11/3)*sqrt(3)})--(4, {(8/3)*sqrt(3)});
\draw[-latex, color = plum, line width=0.8mm] (4, {(8/3)*sqrt(3)})--(5, {(11/3)*sqrt(3)});
\draw[color = green, line width=0.8mm] (7, {(11/3)*sqrt(3)})--(8, {(8/3)*sqrt(3)});

\draw[-latex, color = red, line width=0.8mm] (-8, {(8/3)*sqrt(3)})--(-7, {(5/3)*sqrt(3)});
\draw[color = red, line width=0.8mm] (-8, {(8/3)*sqrt(3)})--(-6, {(2/3)*sqrt(3)});
\draw[-latex, color = red, line width=0.8mm] (-6, {(2/3)*sqrt(3)})--(-5, {(5/3)*sqrt(3)});
\draw[color = red, line width=0.8mm] (-6, {(2/3)*sqrt(3)})--(-4, {(8/3)*sqrt(3)});
\draw[-latex, color = plum, line width=0.8mm] (-4, {(8/3)*sqrt(3)})--(-3, {(5/3)*sqrt(3)});
\draw[color = plum, line width=0.8mm] (-4, {(8/3)*sqrt(3)})--(-2, {(2/3)*sqrt(3)});
\draw[-latex, color = plum, line width=0.8mm] (-2, {(2/3)*sqrt(3)})--(-1, {(5/3)*sqrt(3)});
\draw[color = plum, line width=0.8mm] (-2, {(2/3)*sqrt(3)})--(0, {(8/3)*sqrt(3)});
\draw[-latex, color = green, line width=0.8mm] (0, {(8/3)*sqrt(3)})--(1, {(5/3)*sqrt(3)});
\draw[color = green, line width=0.8mm] (0, {(8/3)*sqrt(3)})--(2, {(2/3)*sqrt(3)}) node[pos=0.6, above=3.0mm] {$s_3$};
\draw[-latex, color = green, line width=0.8mm] (2, {(2/3)*sqrt(3)})--(3, {(5/3)*sqrt(3)});
\draw[color = green, line width=0.8mm] (2, {(2/3)*sqrt(3)})--(4, {(8/3)*sqrt(3)}) node[pos=0.4, above=3.0mm] {$s_3$};
\draw[-latex, color = red, line width=0.8mm] (4, {(8/3)*sqrt(3)})--(5, {(5/3)*sqrt(3)});
\draw[color = red, line width=0.8mm] (4, {(8/3)*sqrt(3)})--(6, {(2/3)*sqrt(3)});
\draw[-latex, color = red, line width=0.8mm] (6, {(2/3)*sqrt(3)})--(7, {(5/3)*sqrt(3)});
\draw[color = red, line width=0.8mm] (6, {(2/3)*sqrt(3)})--(8, {(8/3)*sqrt(3)});

\draw[-latex, color = plum, line width=0.8mm] (-8, {-(4/3)*sqrt(3)})--(-7, {-(1/3)*sqrt(3)});
\draw[color = plum, line width=0.8mm] (-8, {-(4/3)*sqrt(3)})--(-6, {(2/3)*sqrt(3)});
\draw[-latex, color = green, line width=0.8mm] (-6, {(2/3)*sqrt(3)})--(-5, {-(1/3)*sqrt(3)});
\draw[color = green, line width=0.8mm] (-6, {(2/3)*sqrt(3)})--(-4, {-(4/3)*sqrt(3)});
\draw[-latex, color = green, line width=0.8mm] (-4, {-(4/3)*sqrt(3)})--(-3, {-(1/3)*sqrt(3)});
\draw[color = green, line width=0.8mm] (-4, {-(4/3)*sqrt(3)})--(-2, {(2/3)*sqrt(3)});
\draw[-latex, color = red, line width=0.8mm] (-2, {(2/3)*sqrt(3)})--(-1, {-(1/3)*sqrt(3)});
\draw[color = red, line width=0.8mm] (-2, {(2/3)*sqrt(3)})--(0, {-(4/3)*sqrt(3)}) node[pos=0.6, above=3.0mm] {$s_1$};
\draw[-latex, color = red, line width=0.8mm] (0, {-(4/3)*sqrt(3)})--(1, {-(1/3)*sqrt(3)});
\draw[color = red, line width=0.8mm] (0, {-(4/3)*sqrt(3)})--(2, {(2/3)*sqrt(3)})node[pos=0.4, above=3.0mm] {$s_1$};
\draw[-latex, color = plum, line width=0.8mm] (2, {(2/3)*sqrt(3)})--(3, {-(1/3)*sqrt(3)});
\draw[color = plum, line width=0.8mm] (2, {(2/3)*sqrt(3)})--(4, {-(4/3)*sqrt(3)})  node[pos=0.6, above=2.0mm] {$s_2$};
\draw[-latex, color = plum, line width=0.8mm] (4, {-(4/3)*sqrt(3)})--(5, {-(1/3)*sqrt(3)});
\draw[color = plum, line width=0.8mm] (4, {-(4/3)*sqrt(3)})--(6, {(2/3)*sqrt(3)}) node[pos=0.4, above=3.0mm] {$s_2$};
\draw[-latex, color = green, line width=0.8mm] (6, {(2/3)*sqrt(3)})--(7, {-(1/3)*sqrt(3)});
\draw[color = green, line width=0.8mm] (6, {(2/3)*sqrt(3)})--(8, {-(4/3)*sqrt(3)});

\draw[-latex, color = red, line width=0.8mm] (-8, {-(4/3)*sqrt(3)})--(-7, {-(7/3)*sqrt(3)});
\draw[color = red, line width=0.8mm] (-8, {-(4/3)*sqrt(3)})--(-6, {-(10/3)*sqrt(3)});
\draw[-latex, color = red, line width=0.8mm] (-6, {-(10/3)*sqrt(3)})--(-5, {-(7/3)*sqrt(3)});
\draw[color = red, line width=0.8mm] (-6, {-(10/3)*sqrt(3)})--(-4, {-(4/3)*sqrt(3)});
\draw[-latex, color = plum, line width=0.8mm] (-4, {-(4/3)*sqrt(3)})--(-3, {-(7/3)*sqrt(3)});
\draw[color = plum, line width=0.8mm] (-4, {-(4/3)*sqrt(3)})--(-2, {-(10/3)*sqrt(3)});
\draw[-latex, color = plum, line width=0.8mm] (-2, {-(10/3)*sqrt(3)})--(-1, {-(7/3)*sqrt(3)});
\draw[color = plum, line width=0.8mm] (-2, {-(10/3)*sqrt(3)})--(0, {-(4/3)*sqrt(3)});
\draw[-latex, color = green, line width=0.8mm] (0, {-(4/3)*sqrt(3)})--(1, {-(7/3)*sqrt(3)});
\draw[color = green, line width=0.8mm] (0, {-(4/3)*sqrt(3)})--(2, {-(10/3)*sqrt(3)});
\draw[-latex, color = green, line width=0.8mm] (2, {-(10/3)*sqrt(3)})--(3, {-(7/3)*sqrt(3)});
\draw[color = green, line width=0.8mm] (2, {-(10/3)*sqrt(3)})--(4, {-(4/3)*sqrt(3)});
\draw[-latex, color = red, line width=0.8mm] (4, {-(4/3)*sqrt(3)})--(5, {-(7/3)*sqrt(3)});
\draw[color = red, line width=0.8mm] (4, {-(4/3)*sqrt(3)})--(6, {-(10/3)*sqrt(3)});
\draw[-latex, color = red, line width=0.8mm] (6, {-(10/3)*sqrt(3)})--(7, {-(7/3)*sqrt(3)});
\draw[color = red, line width=0.8mm] (6, {-(10/3)*sqrt(3)})--(8, {-(4/3)*sqrt(3)});

\draw[color = plum, line width=0.8mm] (-7, {-(13/3)*sqrt(3)})--(-6, {-(10/3)*sqrt(3)});
\draw[-latex, color = green, line width=0.8mm] (-6, {-(10/3)*sqrt(3)})--(-5, {-(13/3)*sqrt(3)});
\draw[color = green, line width=0.8mm] (-3, {-(13/3)*sqrt(3)})--(-2, {-(10/3)*sqrt(3)});
\draw[-latex, color = red, line width=0.8mm] (-2, {-(10/3)*sqrt(3)})--(-1, {-(13/3)*sqrt(3)});
\draw[color = red, line width=0.8mm] (1, {-(13/3)*sqrt(3)})--(2, {-(10/3)*sqrt(3)});
\draw[-latex, color = plum, line width=0.8mm] (2, {-(10/3)*sqrt(3)})--(3, {-(13/3)*sqrt(3)});
\draw[color = plum, line width=0.8mm] (5, {-(13/3)*sqrt(3)})--(6, {-(10/3)*sqrt(3)});
\draw[-latex, color = green, line width=0.8mm] (6, {-(10/3)*sqrt(3)})--(7, {-(13/3)*sqrt(3)});

%vertices

\node[ver] at (-6, {-(10/3)*sqrt(3)}) (-6, {-(10/3)*sqrt(3)}) {};
\node[ver] at (-2, {-(10/3)*sqrt(3)}) (-2, {-(10/3)*sqrt(3)}) {};
\node[ver] at (2, {-(10/3)*sqrt(3)}) (2, {-(10/3)*sqrt(3)}) {};
\node[ver] at (6, {-(10/3)*sqrt(3)}) (6, {-(10/3)*sqrt(3)}) {};

%\node[circle, draw] at (0, 0) (0, 0) {};
%\node[circle, draw] at (4, 0) (4, 0) {};
%\node[circle, draw] at (2, {(6/3)*sqrt(3)}) (2, {(6/3)*sqrt(3)}) {};

\node[ver] at (-8, {-(4/3)*sqrt(3)}) (-8, {-(4/3)*sqrt(3)}) {};
\node[ver] at (-4, {-(4/3)*sqrt(3)}) (-4, {-(4/3)*sqrt(3)}) {};
\node[ver] at (0, {-(4/3)*sqrt(3)}) (0, {-(4/3)*sqrt(3)}) {};
\node[ver] at (4, {-(4/3)*sqrt(3)}) (4, {-(4/3)*sqrt(3)}) {};
\node[ver] at (8, {-(4/3)*sqrt(3)}) (8, {-(4/3)*sqrt(3)}) {};

\node[ver] at (-6, {(2/3)*sqrt(3)}) (-6, {(2/3)*sqrt(3)}) {};
\node[ver] at (-2, {(2/3)*sqrt(3)}) (-2, {(2/3)*sqrt(3)}) {};
\node[ver] at (2, {(2/3)*sqrt(3)}) (2, {(2/3)*sqrt(3)}) {}; %C0
\node[] at ({2+0.3}, {(2/3)*sqrt(3)-0.2}) ({2+0.3}, {(2/3)*sqrt(3)-0.2}) {$o$}; %C0
\node[ver] at (6, {(2/3)*sqrt(3)}) (6, {(2/3)*sqrt(3)}) {};

\node[ver] at (-8, {(8/3)*sqrt(3)}) (-8, {(8/3)*sqrt(3)}) {};
\node[ver] at (-4, {(8/3)*sqrt(3)}) (-4, {(8/3)*sqrt(3)}) {};
\node[ver] at (0, {(8/3)*sqrt(3)}) (0, {(8/3)*sqrt(3)}) {};
\node[ver] at (4, {(8/3)*sqrt(3)}) (4, {(8/3)*sqrt(3)}) {};
\node[ver] at (8, {(8/3)*sqrt(3)}) (8, {(8/3)*sqrt(3)}) {};

\end{tikzpicture}
}
\caption{A part of the Cayley diagram of the ``triangle" group $\Gamma_{\rm tri}$ with respect to $\{s_1, s_2, s_3\}$ in Example \ref{ex:tri}.
For $i=1, 2, 3$, each $s_i$ acts as a rotation of degree $2\pi/3$ around one of three corners of the triangle indicated by the dark gray.}
\label{fig:tri}
\end{figure}
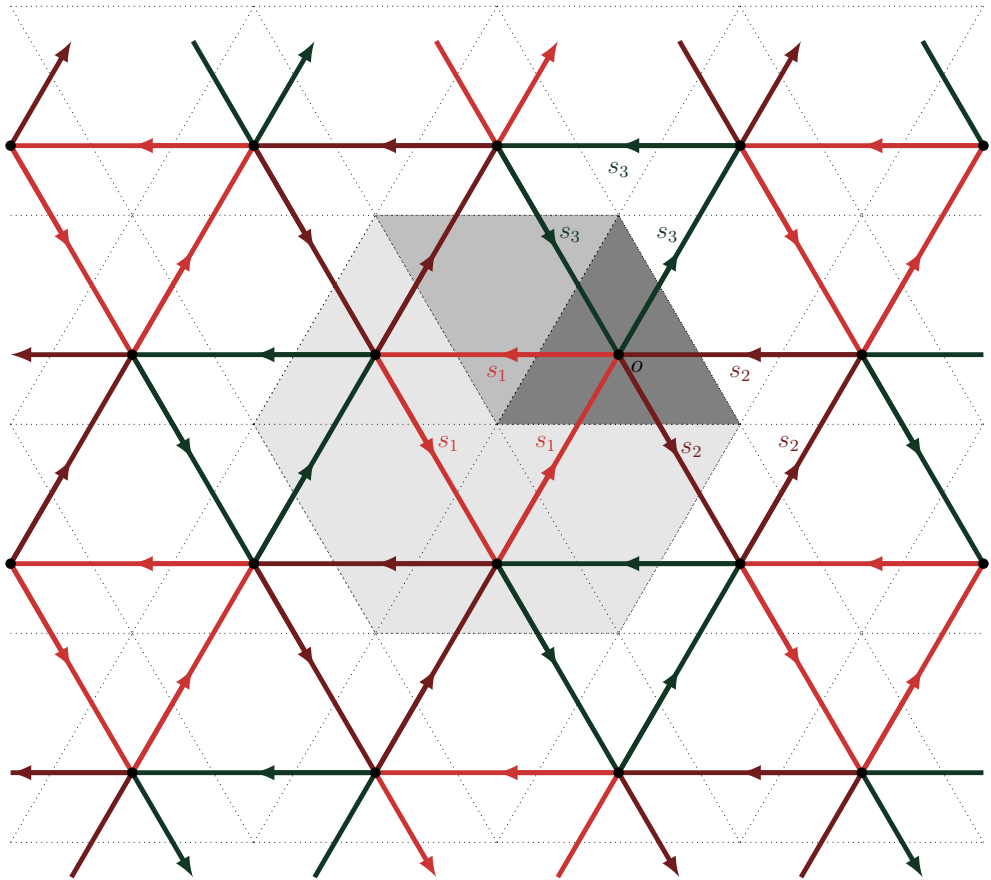

%%end%%tikz%%%%%%%%%%%%%%%%%%%%%%%%%%%%%%%%%%%%%%%%%%%

\begin{example}\label{ex:tri}
Let us consider the following ``triangle'' group
\[
\Gamma_{\rm tri}:=\br{s_1, s_2, s_3 \mid s_1^3=s_2^3=s_3^3=s_1s_2s_3=\id}.
\]
This is an index $2$ subgroup of the affine Weyl group of type $\wt A_2$,
\[
W:=\br{\sigma_1, \sigma_2, \sigma_3 \mid \sigma_1^2=\sigma_2^2=\sigma_3^2=(\sigma_1\sigma_2)^3=(\sigma_2\sigma_3)^3=(\sigma_3\sigma_1)^3=\id}.
\]
The group $\Gamma_{\rm tri}$ is isomorphic to the subgroup generated by $\sigma_1 \sigma_2$, $\sigma_2 \sigma_3$, $\sigma_3 \sigma_1$ in $W$ via the homomorphism induced by $s_i\mapsto \sigma_i \sigma_{i+1}$ where the index $i$ is modulo $3$ for $i=1, 2, 3$.
The affine Weyl group $W$ acts on the Euclidean plane by isometries where $\sigma_i$ acts as a reflection.
The action of $W$ has an equilateral triangle as a fundamental domain (indicated by the dark gray triangle in Figure \ref{fig:tri}),
whereas the action of $\Gamma_{\rm tri}$ has a rhombus as a fundamental domain (indicated by the union of gray and dark gray triangles in Figure \ref{fig:tri}).
The Cayley diagram of $\Gamma_{\rm tri}$ with respect to $\{s_1, s_2, s_3\}$ together with their inverses is indicated in Figure \ref{fig:tri} (where the directed edges corresponding to the inverses $s_i^{-1}$ are omitted).
This is a directed graph with edges labeled by $s_i$ or the inverse (cf.\ Section \ref{sec:cayleydiagram}).
The group $\Gamma_{\rm tri}$ is isomorphic to $\Z^2\rtimes (\Z/3)$, and $\Gamma_{\rm tri}$ does not admit nonzero homomorphisms onto $\Z$ since the group is generated by three elements $s_1$, $s_2$, and $s_3$ of order three.
Let $\mu_{\rm tri}$ be the uniform distribution on $\{s_1, s_2, s_3, s_1^{-1}, s_2^{-1}, s_3^{-1}\}$.
Then, $\mu_{\rm tri}$ is aperiodic since the $\mu_{\rm tri}$-random walk starting from $\id$ returns $\id$ with positive probability after time $2$ and $3$.
Theorem \ref{thm:intro} implies that the $\mu_{\rm tri}$-random walk on $\Gamma_{\rm tri}$ is noise sensitive.
\end{example}

\subsection*{Outlines of the proofs}
Let us briefly explain the proof of Theorems \ref{thm:intro} on noise sensitivity and that of Theorem \ref{thm:zerohom_intro} on more general decoupling.
We will also outline the proof of Theorem \ref{thm:nonzerohom_intro} on non-noise sensitivity.

An infinite finitely generated virtually abelian group $\Gamma$ admits a finite index normal subgroup $\Lambda$ isomorphic to $\mathbb{Z}^m$ for some $m\ge 1$.
The main technical ingredient is to establish the following form of local central limit theorem (CLT) for the $\mu$-random walk on $\Gamma$.
Defining an appropriate ``Gaussian distribution'' $\mathcal{N}_{n, \Sigma, \zeta}$ on $\Gamma$ with mean $n\zeta$ and covariance $n\Sigma$,
we have
\[
\|\mu_n-\mathcal{N}_{n,\Sigma, \zeta}\|_\TV \to 0 \quad \text{as $n \to \infty$}.
\]
In the above, we use the $\Lambda$-coset decomposition to define $\Nc_{n, \Sigma, \zeta}$, as this measure is  a product of an appropriate distribution on the quotient $\Lambda \backslash \Gamma$ and a Gaussian on $\R^m$.

This local CLT holds for an aperiodic $\mu$ with finite second moment, and can be generalized to a non-aperiodic measure once the period is taken into account. See Theorem~\ref{thm:lclt} in Section~\ref{sec:lclt} for the precise statement.
We note that this is a special case of local CLT which has already been proven in \cite{ks} (where the Markov chain is called ``random walks with internal degrees of freedom'').
We, however, establish an explicit formula to compute the covariance matrix $\Sigma$ in terms of harmonic $1$-forms on a finite directed graph associated with the virtually abelian group structure.
This explicit formula is essential to verifying decoupling in Theorem~\ref{thm:zerohom_intro} and noise sensitivity in Theorem~\ref{thm:intro}.
The overall strategy follows the line of proof in the case of affine Weyl groups in \cite{t-ns}; 
however, we need to generalize and strengthen the technical details.

We apply the local CLT to the product group $\Gamma_1 \times \Gamma_2$ and show that if both $\Gamma_1$ and $\Gamma_2$ have no nonzero homomorphisms onto $\Z$, 
then the drift $\zeta$ vanishes and $\Sigma$ has a block diagonal form along the product structure.
(The fact that the drift vanishes follows from the general result \cite{Karlsson-Ledrappier}, see Section \ref{sec:ns}.)
This part requires a group-theoretic consideration.
We consider a canonical group homomorphism $\vartheta: \Gamma \to \Z^m$ called {\it transfer}, defined in terms of the cocycle of the action on $\Lambda\backslash \Gamma$.
If $\Gamma$ admits no nonzero homomorphism onto $\Z$,
then $\vartheta$ has to be the zero homomorphism.
It turns out that this has a strong restriction on the behavior of random walks on $\Gamma$.

If $\Gamma$ has nonzero homomorphisms onto $\Z$,
then the local CLT shows that the covariance matrix detects a correlation between two factors.
This is used to show Theorem \ref{thm:nonzerohom_intro} on non-noise sensitivity.
A part of the proof is reduced to the case of $\Z^m$.

In fact, the explicit form of the covariance matrix of the asymptotic Gaussian distribution allows us to deduce structural results, stated as Theorem~\ref{thm:structure} and Proposition~\ref{thm:structurepi}, related to the transfer homomorphism $\vartheta : \Gamma \to \Z^m$ 
as described in Section~\ref{sec:pre}. 
We will provide the precise statements in Section~\ref{sec:ns}.

\subsection*{Organization} In the preliminary Section~\ref{sec:wdiagram}, we present the notion of weighted diagrams and their harmonic $1$-forms. In Section~\ref{sec:pre}, we devote ourselves to the basics of random walks on virtually abelian groups: the transfer homomorphism, the associated weighted diagram and the potentials corresponding to the cocycle of finite quotient action.
In Section~\ref{sec:lcltmain}, we establish the local CLT for random walks on the groups we are considering, and this will be of independent interest.
In Section \ref{sec:ns},
we establish the general structural result Theorem~\ref{thm:structure}  
on the covariance matrices in the local CLTs. We
apply it to deduce noise sensitivity and decoupling, which provide the proofs of Theorems~\ref{thm:intro}, \ref{thm:nonzerohom_intro} and \ref{thm:zerohom_intro}.

\section{Weighted diagrams and their harmonic $1$-forms}\label{sec:wdiagram}

In this section, we introduce weighted diagrams and their harmonic $1$-forms.

\subsection{Weighted diagram} A diagram $G$ is a multi-digraph with set of vertices $V(G)$ and set of directed edges $E(G)$. To each edge is associated its \emph{original} vertex $oe$ and  its \emph{terminal} vertex $te$.  In our definition, we assume that $E(G)$ is closed under taking reverse edges: to each $e\in E(G)$ there corresponds an edge $\bar{e}$ with $o\bar{e}=te$, $t\bar{e}=oe$ and $\bar{\bar{e}}=e$. Note that we allow the possibility that there are several edges going from a vertex $x=oe$ to another vertex $y=te$, as well as loops, i.e.\ edges with $te=oe$. We will assume that the vertex set $V(G)$ is finite, but even in this case the edge set $E(G)$ can be infinite.

A \emph{weighted diagram} is a diagram equipped with a weight map $p:E(G) \to [0,1]$ such that 
\[
\sum_{e:oe=x} p(e)=1 \quad \text{for all $x \in V(G)$}.
\]
It defines a Markov chain on $V(G)$ with transition probabilities
\[
p(x,y)=\sum_{e:oe=x,te=y} p(e) \quad \text{for all $x,y \in V(G)$}.
\]
If the Markov chain is irreducible, i.e. the probability to reach any vertex starting from any vertex is positive, then by the Perron-Frobenius Theorem, it admits a unique stationary distribution $\pi$ on $V(G)$, i.e.,
\[
\sum_{e : te=x}\pi(oe)p(e)=\pi(x) \quad \text{for each $x\in V(G)$}.
\]
The \emph{conductance} of an edge $e \in E(G)$ is 
$c(e):=\pi(oe)p(e)$. The conductance $c$ can be viewed as a probability measure on $E(G)$. The stationary measure $\pi$ and the weight map~$p$ can be recovered from $c$.
The Markov chain is \emph{reversible} if $c(e)=c(\wb e)$ for every $e \in E(G)$. Note that it is important for our purpose to consider also non-reversible Markov chains on $V(G)$.

\subsection{Harmonic $1$-forms on the weighted diagram $(G, c)$}\label{sec:harmonic}

Let us define the $\C$-linear space of complex-valued functions on $V(G)$,
\[
C^0(G, \C):=\Bigl\{f:V(G) \to \C\Bigr\}.
\]
This space is endowed with the inner product $\br{f_1, f_2}_\pi:=\sum_{x \in V(G)}f_1(x)\wb{f_2(x)}\pi(x)$,
where $\wb \alpha$ denotes the complex-conjugate of a complex number $\alpha$. 
Analogously,
$C^0(G, \R)$ denotes the $\R$-linear space of real-valued functions on $V(G)$ equipped with the inner product~$\br{\cdot,\cdot}_\pi$.
We say that a (real-valued) function on $E(G)$ is a  \emph{$1$-form} if $\omega(\wb e)=-\omega(e)$ for all $e \in E(G)$.
Let $C^1(G, \R)$ denote the space of $1$-forms on $G$.
We define the real Hilbert space of square integrable $1$-forms on $G$ by
\[
\ell_c^2(G, \R):=\Bigl\{\omega \in C^1(G, \R) \ : \ \|\omega\|_c<\infty\Bigr\}.
\]
In the above, $\|\omega\|_c$ denotes the induced norm and the inner product is defined by 
\[
\br{\omega_1, \omega_2}_c:=\frac{1}{2}\sum_{e\in E(G)}\omega_1(e)\omega_2(e)c(e).
\]

For $f\in C^0(G, \R)$,
the \emph{differential} $df$ is the $1$-form defined by
$df(e):=f(te)-f(oe)$ for $e \in E(G)$.
Note that
\begin{align*}
\|df\|_c^2
=\frac{1}{2}\sum_{e \in E(G)}|df(e)|^2 c(e)
&=\frac{1}{2}\sum_{x, y\in V(G)}|f(y)-f(x)|^2\sum_{e: oe=x, te=y}\pi(x)p(e)\\
&\le \frac{1}{2}\sum_{x, y \in V(G)}|f(y)-f(x)|^2\,\pi(y)<\infty.
\end{align*}
In the above, the last summation runs over a finite set.
This shows that 
\[
d:C^0(G, \R) \to \ell_c^2(G, \R) \subset C^1(G,\R)
\]
is a well-defined $\R$-linear operator.
For $\omega \in \ell_c^2(G, \R)$,
let
\[
d^\ast \omega(x):=-\frac{1}{\pi(x)}\sum_{e:oe=x}c(e)\omega(e) \quad \text{for $x \in V(G)$}.
\]
This is well-defined by the Cauchy-Schwarz inequality and defines an $\R$-linear operator $d^\ast: \ell_c^2(G, \R) \to C^0(G, \R)$.
The transition operator $P$ on $C^0(G, \R)$ to itself is defined by
\[
P f(x):=\frac{1}{\pi(x)}\sum_{e: oe=x}c(e)f(te) \quad \text{for $x \in V(G)$}.
\]
Letting $I$ denote the identity operator,
we have
\[
d^\ast d=I-P.
\]

\begin{remark}
When the weighted diagram is reversible, and in this case only, 
the transition operator $P$ is self-adjoint
and $d^\ast$ is the adjoint of $d$, i.e.\ $\br{df, \omega}_c=\br{f, d^\ast \omega}_\pi$ for all $f \in C^0(G, \R)$ and $\omega \in \ell_c^2(G, \R)$.  We insist on the fact that in the general case, $d^\ast$ is \emph{not} the adjoint of $d$.
\end{remark}

For every $\omega \in \ell_c^2(G, \R)$,
let us define
\[
\dr{c}{\omega}:=\sum_{e\in E(G)}\omega(e)c(e).
\]
This is well-defined by the Cauchy-Schwarz inequality.
Let us define the $\R$-linear subspace of \emph{harmonic $1$-forms} by
\[
H^1:=\Bigl\{\omega \in \ell_c^2(G, \R) \ : \ d^\ast \omega+\dr{c}{\omega}=0\Bigr\}.
\]
We have $\dr{c}{\omega}=0$ for all $\omega \in \ell_c^2(G, \R)$ if and only if $c(e)=c(\wb e)$ for all $e \in E(G)$.

\begin{remark}\label{rem:KS} This notion of harmonic $1$-forms was introduced by Kotani and Sunada~\cite{ks-ldp}. In their paper, they highlight the role of the homology class of $\sum_{e \in E(G)}c(e)e$, which they call the {\em homological direction} of a random walk.
\end{remark}
The following lemma appears in \cite[Lemma 5.2]{ks-ldp}, which we prove in an extended form.

\begin{lemma}\label{lem:decomposition}
Letting $\im d:=dC^0(G, \R)$ be the image of $d$ in $\ell_c^2(G, \R)$,
we have the direct sum (but not necessarily orthogonal) decomposition
\[
\ell_c^2(G, \R)=H^1\oplus \im d.
\]
In particular, for every $\omega \in \ell_c^2(G, \R)$, there exists a unique $u \in H^1$ having some $f \in C^0(G, \R)$ such that $\omega=u+df$.
Moreover, $\dr{c}{\omega}=\dr{c}{u}$.
\end{lemma}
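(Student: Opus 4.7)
\emph{Plan of proof.} The plan is to reduce the direct-sum decomposition to a solvability question for the finite-dimensional operator $I-P = d^*d$ on $C^0(G,\R)$, carefully tracking the scalar twist $\dr{c}{\cdot}$ that compensates for the failure of $d^*$ to be the honest adjoint of $d$ in the non-reversible setting.

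First I would record the key compatibility identity: $\dr{c}{df} = 0$ for every $f \in C^0(G,\R)$. This is immediate from $c(e) = \pi(oe) p(e)$ together with the stationarity of $\pi$, by reindexing $\sum_e (f(te)-f(oe))c(e)$ and using that $\sum_{e: te = y} c(e) = \pi(y) = \sum_{e: oe = y} c(e)$. With this in hand, the triviality $H^1 \cap \im d = \{0\}$ is automatic: if $\omega = df$ lies in $H^1$, then $\dr{c}{\omega}=0$, so the defining condition of $H^1$ collapses to $d^*df = (I-P)f = 0$, and the standing irreducibility of the finite Markov chain forces $f$ to be constant by the maximum principle (equivalently Perron--Frobenius), whence $\omega = df = 0$.

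For existence, given $\omega \in \ell^2_c(G,\R)$, I look for $f \in C^0(G,\R)$ such that $u := \omega - df$ lies in $H^1$. Using $\dr{c}{df}=0$ and $d^*d = I-P$, this rewrites as the inhomogeneous equation
\[
(I-P)f = d^*\omega + \dr{c}{\omega},
\]
which is finite-dimensional since $V(G)$ is finite. By Perron--Frobenius the left kernel of $I-P$ is spanned by $\pi$, so the range of $I-P$ equals $\{g \in C^0(G,\R) : \sum_x g(x)\pi(x) = 0\}$; solvability thus reduces to the single identity $\sum_x \bigl(d^*\omega(x) + \dr{c}{\omega}\bigr)\pi(x) = 0$. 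A direct computation from the definition of $d^*$ gives $\sum_x (d^*\omega)(x)\pi(x) = -\sum_e c(e)\omega(e) = -\dr{c}{\omega}$, and the two terms cancel. Any solution $f$ then yields $u := \omega - df \in H^1$ with $\omega = u + df$, and the closing equality $\dr{c}{\omega} = \dr{c}{u}$ is immediate from $\dr{c}{df}=0$.

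The main obstacle is precisely the non-reversibility: one cannot invoke the standard orthogonal Hodge decomposition, because $d^*$ fails to be the adjoint of $d$ whenever $c(e) \ne c(\wb e)$, and correspondingly $H^1 \oplus \im d$ is in general not an orthogonal decomposition. The twist $\dr{c}{\cdot}$ is the minimal correction restoring solvability of the Poisson-type equation, and the identity $\sum_x (d^*\omega)(x)\pi(x) = -\dr{c}{\omega}$ is exactly the algebraic cancellation that makes this correction do the right job.
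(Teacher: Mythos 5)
Your proof is correct and follows essentially the same route as the paper's: establish $\dr{c}{df}=0$ via stationarity, deduce $H^1\cap\im d=\{0\}$ from irreducibility of $P$, and reduce existence to solvability of the Poisson-type equation $(I-P)f=d^*\omega+\dr{c}{\omega}$, which holds because the right-hand side integrates to zero against $\pi$. The only cosmetic difference is that you phrase the solvability criterion via the left kernel of $I-P$ (spanned by $\pi$), whereas the paper uses the equivalent statement $\ker(I-P^*)=\R\1$ in the $\pi$-weighted inner product.
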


\proof
Note that $\im d$ is a finite dimensional closed subspace in $\ell_c^2(G, \R)$.
Furthermore, $H^1$ is a closed subspace in $\ell_c^2(G, \R)$ since $d^\ast \omega_n(x) \to 0$ for each $x \in V(G)$ and $\dr{c}{\omega_n}\to 0$ if a sequence $\omega_n$ tends to $0$ in $\ell_c^2(G, \R)$ by the Cauchy-Schwarz inequality.

Let $\omega \in H^1 \cap \im d$.
Since $\omega \in \im d$,
there exists $f \in C^0(G, \R)$ such that $\omega=df$.
We have $\dr{c}{df}=0$.
Indeed,
\begin{align}\label{eq:lem:decomposition:df}
\dr{c}{df}=\sum_{e\in E(G)}\left(f(te)-f(oe)\right)c(e)
&=\sum_{x\in V(G)}\sum_{e: te=x}f(te)c(e)-\sum_{x \in V(G)}f(x)\pi(x)\nonumber\\
&=\sum_{x\in V(G)}f(x)\left(\sum_{e: te=x}\pi(oe)p(e)-\pi(x)\right)=0.
\end{align}
In the above, the last equality holds since $\pi$ is the stationary distribution.
Therefore, since $\omega=df \in H^1$,
we have
\[
0=d^\ast d f+\dr{c}{df}=(I-P)f.
\]
Noting that $P$ defines an irreducible Markov chain on $V(G)$ shows that $f$ is constant,
whence $\omega=df=0$.
Hence $H^1\cap \im d=\{0\}$.

Let us show that $\ell_c^2(G, \R)=H^1+\im d$.
We will show that for every $\omega \in \ell_c^2(G, \R)$
there exists some $f \in C^0(G, \R)$ such that
\begin{equation}\label{eq:lem:decomposition}
d^\ast \omega+\dr{c}{\omega}-(I-P)f=0.
\end{equation}
This implies the claim.
Indeed, since $\dr{c}{df}=0$ as shown before,
$(I-P)f=d^\ast df$ and
\[
d^\ast(\omega-df)+\dr{c}{\omega-df}=d^\ast \omega+\dr{c}{\omega}-d^\ast df=0,
\]
we have $\omega-df\in H^1$ and $\omega=(\omega-df)+df\in H^1+\im d$.

Let us show \eqref{eq:lem:decomposition}.
Let $P^\ast$ denote the adjoint of $P$ with respect to the inner product in $C^0(G, \R)$.
We have the orthogonal decomposition
$C^0(G, \R)=\im(I-P)\oplus \ker(I-P^\ast)$ in the finite dimensional Hilbert space.
The associated Markov chain with $P^\ast$ (which is the reversed chain of irreducible $P$) is irreducible on $V(G)$,
and thus $\ker (I-P^\ast)$ consists of constant functions $\R\1$.
Therefore, for all $f \in C^0(G, \R)$,
\[
f\in \im(I-P) \iff \br{f, \1}_\pi=0.
\]
Let $\omega \in \ell_c^2(G, \R)$.
By definition of $d^\ast$, we have
\[
\br{d^\ast \omega+\dr{c}{\omega}, \1}_\pi
=-\sum_{x\in V(G)}\sum_{e:oe=x}c(e)\omega(e)+\dr{c}{\omega}\sum_{x\in V(G)}\pi(x)
=-\dr{c}{\omega}+\dr{c}{\omega}=0.
\]
This implies that $d^\ast \omega+\dr{c}{\omega} \in \im(I-P)$,
and thus \eqref{eq:lem:decomposition} holds.
This completes the first claim.
The second claim directly follows from the first claim.
The last claim holds since $\dr{c}{df}=0$ for all $f \in C^0(G, \R)$ by \eqref{eq:lem:decomposition:df}.
\qed

\begin{remark}\label{rem:continuity} In the proof of Lemma \ref{lem:decomposition}, the operator $I-P$ is invertible on the orthogonal complement of $\R\1$ in $C^0(G, \R)$. In the case of Corollary~\ref{cor:rho1}, the conductance $c$ depends continuously on a real one-parameter $\rho \in (0, 1]$, and the stationary distribution on $V(G)$ is uniform independently of $\rho$. If $\omega$ belongs to $\ell_c^2(G, \R)$ for all $c$ in the whole parameter range, the harmonic part $u$ of $\omega$ depends continuously in $\rho \in (0, 1]$. 
Indeed, (cf.\ \eqref{eq:lem:decomposition})
\[
f=(I-P)^{-1}(d^\ast \omega+\dr{c}{\omega})=\sum_{n=0}^\infty P^n(d^\ast \omega +\dr{c}{\omega})
\]depends continuously on $\rho \in (0, 1]$, so does $u=\omega-df$.\end{remark}

\section{Random walks on virtually abelian groups}\label{sec:pre}

An infinite  finitely generated group $\Gamma$ is \emph{virtually abelian} if it admits an abelian subgroup $\Lambda$ of finite index.
Such a  subgroup $\Lambda$ is finitely generated and abelian.
Thus, passing to a subgroup if necessary,
$\Lambda$ is a free abelian group $\Z^m$ for some integer $m\ge 1$
since $\Gamma$ is assumed to be infinite.
Passing to a subgroup further if necessary,
we may assume that $\Lambda$ is a normal subgroup of $\Gamma$.
Namely, one obtains a short exact sequence of groups:
\begin{equation}\label{eq:exactseq}
1 \to \Lambda \to \Gamma \to F \to 1,
\end{equation}
where $\Lambda$ is isomorphic to $\Z^m$ and $F$ is a finite group.
Note that we {\em do not} assume that $\Gamma$ has the form of semidirect product $\Z^m\rtimes F$, except in Lemma~\ref{lem:ucst} and Proposition~\ref{thm:structurepi}.
%Moreover, for the moment (until we discuss random walks), we consider $\Lambda$ merely as a subgroup of $\Gamma$.

Let $\Gamma=\bigsqcup_{x\in \Delta}\Lambda x$ be a left coset decomposition,
where $\Delta$ is a finite set of representatives containing $\id$.
The group $\Gamma$ acting on itself by right translation induces an action on $\Delta$ by permutations $x\mapsto x^\gamma$ for $x \in \Delta$ and $\gamma \in \Gamma$.
Further, this action induces a \emph{cocycle} 
\[
\alpha:\Delta \times \Gamma \to \Lambda, \quad (x, \gamma)\mapsto \alpha(x, \gamma),
\]
where $x\gamma=\alpha(x, \gamma)x^\gamma$ in the unique coset decomposition.
The right action of $\Gamma$ on $\Delta$ reads $x(\gamma \gamma')=(x\gamma)\gamma'$,
from which the \emph{cocycle identity} follows:
\begin{equation}\label{eq:cocycleid}
\alpha(x, \gamma\gamma')=\alpha(x, \gamma)+\alpha(x^\gamma, \gamma')
\quad
\text{for $x \in \Delta$ and $\gamma, \gamma' \in \Gamma$}.
\end{equation}
The additive notation is used for group operation in abelian groups.

\subsection{Transfer and nonzero homomorphisms onto $\Z$}\label{sec:transfer}
The \emph{transfer homomorphism} %(or Verlagerung)
$\vartheta : \Gamma \to \Lambda$ is defined by
\[
\vartheta(\gamma):=\sum_{x\in \Delta}\alpha(x, \gamma).
\]

\begin{lemma}
The transfer $\vartheta : \Gamma \to \Lambda$ defines a group homomorphism. It is independent of the set of representatives $\Delta$.
\end{lemma}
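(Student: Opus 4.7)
The plan is to verify both assertions by direct manipulation of the cocycle identity \eqref{eq:cocycleid}, exploiting the fact that $\Lambda$ is abelian so that summation over $\Delta$ is unordered, and that right multiplication by $\gamma$ permutes $\Delta$.

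First, for the homomorphism property, I would apply \eqref{eq:cocycleid} inside the defining sum:
\[
\vartheta(\gamma\gamma') = \sum_{x\in\Delta}\alpha(x,\gamma\gamma') = \sum_{x\in\Delta}\alpha(x,\gamma) + \sum_{x\in\Delta}\alpha(x^\gamma,\gamma').
\]
The first summand is $\vartheta(\gamma)$ by definition. For the second, the map $x \mapsto x^\gamma$ is a bijection of $\Delta$ (it is the permutation induced by right translation), so reindexing $y := x^\gamma$ gives $\sum_{x}\alpha(x^\gamma,\gamma') = \sum_{y}\alpha(y,\gamma') = \vartheta(\gamma')$. Hence $\vartheta(\gamma\gamma') = \vartheta(\gamma) + \vartheta(\gamma')$.

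Second, to verify independence of $\Delta$, let $\Delta' = \{x' : x \in \Delta\}$ be another set of representatives, where $x'$ lies in the same left $\Lambda$-coset as $x$, say $x' = \lambda_x x$ for some $\lambda_x \in \Lambda$. Because $\Lambda$ is normal in $\Gamma$, conjugation by any $\gamma$ stabilizes $\Lambda$, so one may rewrite $\lambda_x \, x\gamma = (\lambda_x \cdot \alpha(x,\gamma)) x^\gamma$, i.e.\ I would compute the new cocycle $\alpha'$ attached to $\Delta'$ by observing
\[
x' \gamma = \lambda_x x \gamma = \lambda_x \, \alpha(x,\gamma)\, x^\gamma = \bigl(\lambda_x + \alpha(x,\gamma) - \lambda_{x^\gamma}\bigr)\, (x^\gamma)',
\]
so $\alpha'(x',\gamma) = \alpha(x,\gamma) + \lambda_x - \lambda_{x^\gamma}$. (When $\Lambda$ is not the centralizer one must be slightly careful: the element $\lambda_{x^\gamma}$ may have to be replaced by its conjugate, but that conjugate still lies in $\Lambda$, and after summing over $\Delta$ this makes no difference, as we verify next.) Summing over $x \in \Delta$ gives
\[
\vartheta'(\gamma) = \sum_{x\in\Delta}\alpha(x,\gamma) + \sum_{x\in\Delta}\lambda_x - \sum_{x\in\Delta}\lambda_{x^\gamma}.
\]
The last two sums are equal because $x \mapsto x^\gamma$ is a permutation of $\Delta$, so $\vartheta'(\gamma) = \vartheta(\gamma)$.

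The only subtle point is the normality issue noted above: a priori $\lambda_x$ and $\alpha(x,\gamma)$ do not commute with $x$ in $\Gamma$, so the factorization producing $\alpha'(x',\gamma)$ involves a conjugate $x^{-1}\lambda_x x$ lying in $\Lambda$. I would clean this up by recording that such a conjugation induces a bijection $\Lambda \to \Lambda$ whose contribution is absorbed when passing to the sum, because $\Lambda \cong \Z^m$ and the action of $F$ on $\Lambda$ acts by automorphisms that, after summing the telescoping $\lambda_x - \lambda_{x^\gamma}$ terms, cancel by the permutation argument. This is the only place where some care is required; once dispatched, both statements follow.
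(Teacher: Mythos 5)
Your argument is correct and follows essentially the same route as the paper: the cocycle identity plus the permutation $x\mapsto x^\gamma$ gives the homomorphism property, and writing $x'=\lambda_x x$ (the paper's $\Phi(x')$) gives $\alpha'(x',\gamma)=\lambda_x+\alpha(x,\gamma)-\lambda_{x^\gamma}$, whose sum over $\Delta$ telescopes by the same permutation. The ``subtle point'' you flag at the end is in fact a non-issue: in the chain $x'\gamma=\lambda_x\alpha(x,\gamma)x^\gamma=\lambda_x\alpha(x,\gamma)\lambda_{x^\gamma}^{-1}(x^\gamma)'$ all the $\Lambda$-elements stay to the left and are never moved past $x$ or $x^\gamma$, so no conjugation ever enters and no extra care is needed.
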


\proof
By the cocycle identity \eqref{eq:cocycleid}, we have
\[
\vartheta(\gamma\gamma')=\sum_{x\in \Delta} \alpha(x,\gamma\gamma')=\sum_{x\in \Delta} \alpha(x,\gamma)+\sum_{x\in \Delta}\alpha(x^\gamma,\gamma')=\vartheta(\gamma)+\vartheta(\gamma')
\]
as $x \mapsto x^\gamma$ is a permutation of $\Delta$.
Let us show that $\vartheta$ is independent of the choice of $\Delta$.
For another set of representatives $\Delta'$,
a natural bijection $\Delta\to \Delta'$, $x\mapsto x'$, is $\Gamma$-equivariant since both $\Delta$ and $\Delta'$ are sets of representatives on the right cosets $\Lambda\backslash \Gamma$.
For each $x$ in $\Delta$,
there exists a unique $\Phi(x')$ in $\Lambda$ such that $x'=\Phi(x')x$.
We have  $x'\gamma=\Phi(x')x\gamma=\Phi(x')\alpha(x, \gamma)x^\gamma$
and, by the $\Gamma$-equivariance of $x\mapsto x'$,
\[
x'\gamma=\alpha(x', \gamma)(x')^\gamma=\alpha(x', \gamma)(x^\gamma)'=\alpha(x', \gamma)\Phi\left((x^\gamma)'\right)x^\gamma.
\]
Hence $\alpha(x, \gamma)=\Phi(x')^{-1}\alpha(x', \gamma)\Phi\left((x^\gamma)'\right)$.
Switching to additive notations and taking sums,
we obtain
\[
\sum_{x\in \Delta}\alpha(x, \gamma)=\sum_{x'\in \Delta'}-\Phi(x')+\alpha(x', \gamma)+\Phi\left((x^\gamma)'\right)=\sum_{x'\in \Delta'}\alpha(x', \gamma),
\]
as required.
\qed

\medskip

In general, the transfer is defined for a group $\Gamma$ and a finite index subgroup $\Lambda$ as a group homomorphism from $\Gamma$ to the abelianization $\Lambda/[\Lambda, \Lambda]$ 
(cf.\ \cite[pp.\ 296-297]{neukirch}, where the definition uses the left cosets instead).
However, we do not discuss the transfer in this generality.

The behavior of the transfer will play a crucial role in our discussion.
On the one hand, if the image $\im(\vartheta)\neq\{0\}$,
then $\Gamma$ admits a nonzero homomorphism onto $\Z$.
On the other hand, if $\im(\vartheta)=\{0\}$,
then for every $\gamma \in \Gamma$,
\[
\sum_{x\in \Delta}\alpha(x, \gamma)=0.
\]
This observation will be used in the subsequent sections.

\subsection{The weighted diagram of a random walk on a virtually abelian group}\label{sec:cayleydiagram}

Let $\mu$ be a probability measure on $\Gamma$. Denote $S=\supp\mu$ (not necessarily symmetric). We assume that $\mu$ is \emph{non-degenerate}, i.e.\ that the sub-semigroup generated by $S$ is $\Gamma$ itself. 
We denote by $\mu_n$ the $n$th convolution power of $\mu$ and $w_n$ the random variable of law $\mu_n$, i.e. the position of the random walk at time $n$.

Let $\Cay(\Gamma, S)$ be the right \emph{Cayley diagram} for $(\Gamma, S\cup S^{-1})$,
i.e.\ the labeled multi-digraph on the set of vertices $\Gamma$ and the set of labeled directed edges from $\gamma$ to $\gamma s$ with label $s$ for each $(\gamma, s) \in \Gamma \times (S\cup S^{-1})$.
The notation $(\gamma, s)$ is used for the directed edge with label~$s$ originating in $\gamma$.

By left translations, every  subgroup $\Lambda$ of $\Gamma$ acts on $\Cay(\Gamma, S)$, preserving the labeled multi-digraph structure. We write $[\gamma]:=\Lambda \gamma$ for the vertex associated to a group element $\gamma$ under the quotient map $\Gamma \to \Lambda\backslash \Gamma$.
Let us define the \emph{quotient diagram} $G:=\Lambda\backslash \Cay(\Gamma, S)$.
This is a labeled multi-digraph on the set of vertices $V(G)=\Lambda\backslash \Gamma$. A directed edge of $G$ is defined by its original vertex $[\gamma]$ and a label $s \in S\cup S^{-1}$. We denote $e=([\gamma], s) \in E(G)$, where $oe:=[\gamma]$ and $te:=[\gamma s]$. The set of labeled directed edges is denoted by
\[
E(G)=\left\{([\gamma], s) \ : \ [\gamma]\in \Lambda\backslash \Gamma, s \in S\cup S^{-1} \right\}.
\]
Furthermore, the reversed edge of $e$ has the inverse label $\wb e=([\gamma s], s^{-1})$.
The set $V(G)$ is finite as we assume $\Lambda$ has finite index in $\Gamma$. The set $E(G)$ is infinite when the measure $\mu$ has infinite support. 

The probability measure $\mu$ on $\Gamma$ induces a weight on $G$ given by
\[
p(e):=\mu(s) \quad \text{for all $e=([\gamma],s) \in E(G)$}.
\]
As $\mu$ is non-degenerate, the induced Markov chain is irreducible. When $\Lambda$ is a normal subgroup (which we will assume) the Markov chain induced on $G$ is identified with the random walk induced in the quotient group $\Lambda \backslash \Gamma=F$. In this case the stationary measure $\pi$ is the uniform measure on $V(G)$.

\subsection{Equivariant maps and potentials}\label{sec:potential}

Let us fix a set $\Delta \subset \Gamma$ of representatives of $\Lambda\backslash \Gamma$. We identify $\Delta$ with $\Lambda\backslash \Gamma=V(G)$ via the quotient map $x \mapsto [x]$. Denote $x_0:=[\id]$. Let us also fix an identification $\Lambda=\Z^m$. The following discussion {\em does} depend on the choice of $\Delta$ and this identification.

Let us define
\[
\Phi: \Gamma \to \Z^m, \quad \gamma=vx \mapsto v,
\]
where $\gamma=vx$ is the unique coset decomposition with $v\in \Lambda=\Z^m$ and $x\in \Delta$.
Note that $\Phi$ is \emph{$\Lambda$-equivariant},
i.e.\ $\Phi(v\gamma)=v+\Phi(\gamma)$ for all $(v, \gamma)\in \Lambda\times \Gamma$.

So for each edge $e=([\gamma],s) \in E(G)$, we may define \[
\Phi_e:=\Phi(t\wt e)-\Phi(o\wt e)=\Phi(\gamma s)-\Phi(\gamma)
\]
where $\tilde{e}=(\gamma,s)$ is an arbitrary lift in the Cayley diagram $\Cay(\Gamma, S)$. Observe that this is the cocycle of the $\Gamma$-action on $\Lambda\backslash\Gamma$:
\begin{equation}\label{eq:phialpha}
\Phi_{([\gamma],s)}=\alpha([\gamma],s).
\end{equation}
Indeed, let $\gamma=v x$ with $v \in \Lambda=\Z^m$ and $x=[\gamma]\in\Delta$. Then $\Phi(\gamma)=v$. For $s \in \Gamma$, we have $\gamma s=v x s =v \alpha(x,s) x^ s$, whence $\Phi(\gamma s)=v+\alpha(x,s)$.

Let us define the vector associated with $\Phi$ by
\[
\zeta_c:=\sum_{e \in E(G)}c(e)\Phi_e.
\]
As will be clear from Theorem~\ref{thm:lclt}, this is the drift of the process $\Phi(w_n)$ taking values in~$\Z^m$.
For each $v \in \R^m$,
let
\[
\wh v(e):=\br{v, \Phi_e} \quad \text{for each $e \in E(G)$},
\]
where $\br{\cdot,\cdot}$ denotes the standard inner product in $\R^m$.
This $\wh v$ defines a $1$-form on $G$ as $\Phi_{\bar{e}}=-\Phi_e$.

Let us fix a word metric $|\cdot|$ in $\Gamma$ and consider a probability measure $\mu$ on $\Gamma$ with finite second moment (see the definition before the statement of Theorem \ref{thm:intro}).

\begin{lemma}\label{lem:second}
If a probability measure $\mu$ on $\Gamma$ has finite second moment,
then the $1$-form $\wh v$ on the quotient diagram $G$ belongs to $\ell_c^2(G, \R)$ for each $v \in \R^m$.
Moreover, $\zeta_c=\sum_{e \in E(G)}c(e)\Phi_e$ is absolutely convergent, and 
$\dr{c}{\wh v}=\br{v, \zeta_c}$	for all $v \in \R^m$.
\end{lemma}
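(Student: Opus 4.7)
The plan is to reduce every assertion of the lemma to the finite second moment hypothesis via a single linear bound $\|\Phi_e\|\le C(|s|+1)$ for every edge $e=([\gamma], s)\in E(G)$, where $|\cdot|$ is the chosen word metric on $\Gamma$ and $\|\cdot\|$ is any fixed norm on $\R^m\supset \Lambda$. To establish this bound, I would use \eqref{eq:phialpha}, which gives $\Phi_e=\alpha([\gamma], s)$; taking the representative $x=[\gamma]\in\Delta$, the defining identity $xs=\alpha(x, s)x^s$ yields $\alpha(x, s)=xs(x^s)^{-1}$ in $\Gamma$, so $|\alpha(x, s)|\le 2K+|s|$ with $K:=\max_{x\in\Delta}|x|<\infty$, since $\Delta$ is finite. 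Because $\Lambda$ has finite index in $\Gamma$, its word metric is bi-Lipschitz equivalent to the restriction of $|\cdot|$ to $\Lambda$, and on $\Z^m$ all norms are equivalent, which gives the claim.

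Given this bound, the three assertions follow by direct estimates. Since $\pi$ is uniform on the finite set $V(G)=F$, one has $c(e)=\mu(s)/|F|$, whence
\[
\|\wh v\|_c^2=\frac{1}{2}\sum_{e\in E(G)}\br{v, \Phi_e}^2 c(e)\le \frac{\|v\|^2 C^2}{2|F|}\sum_{x \in \Delta}\sum_{s \in \Gamma}(|s|+1)^2\mu(s),
\]
and since $|\Delta|=|F|$, this is bounded by $(\|v\|^2 C^2/2)\sum_{s\in \Gamma}(|s|+1)^2\mu(s)<\infty$ by the finite second moment. The same estimate with $(|s|+1)^2$ replaced by $(|s|+1)$, together with the fact that finite second moment implies finite first moment, yields absolute convergence of $\zeta_c=\sum_{e}c(e)\Phi_e$.

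Once absolute convergence is in hand, the final identity is immediate from linearity and Fubini:
\[
\dr{c}{\wh v}=\sum_{e\in E(G)}\br{v, \Phi_e}c(e)=\Bigl\langle v, \sum_{e\in E(G)}c(e)\Phi_e\Bigr\rangle=\br{v, \zeta_c}.
\]
There is no real obstacle beyond the linear growth bound $\|\Phi_e\|\le C(|s|+1)$; the lemma is essentially a bookkeeping exercise built on this single estimate.
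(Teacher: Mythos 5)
Your proposal is correct and takes essentially the same approach as the paper: both reduce the lemma to a linear growth bound $\|\Phi_e\|=\|\alpha([\gamma],s)\|=O(|s|)$ and then apply the finite second (resp.\ first) moment hypothesis. The only cosmetic difference is how the linear bound is obtained --- the paper derives $\|\alpha(x,s)\|\le \kappa|s|$ directly from the cocycle identity by decomposing $s$ into generators, whereas you read it off the closed formula $\alpha(x,s)=xs(x^s)^{-1}$ together with the bi-Lipschitz embedding of $\Lambda\cong\Z^m$ in $\Gamma$; both are standard and give the same estimate.
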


\proof
Let $S_0$ be a finite symmetric set of generators defining the word norm.
Note that $\Phi: (\Gamma, |\cdot|)\to (\R^m, \|\cdot\|)$ is Lipschitz, i.e.\ $\|\Phi(\gamma)\|\le \kappa|\gamma|$ where $\kappa:=\max_{x\in \Delta, s \in S_0}\|\alpha(x, s)\|$.
This follows from the cocycle identity of $\alpha(x, \gamma)$ for $x \in \Delta$ and $\gamma \in \Gamma$ (cf.\ \eqref{eq:cocycleid} and \eqref{eq:phialpha}).
Therefore, we have
\begin{align*}
\sum_{e \in E(G)}|\wh v(e)|^2\,c(e)
&\le \sum_{e \in E(G)}\|v\|^2\|\Phi_e\|^2\,c(e)
=\sum_{x \in V(G)}\sum_{s \in S}\|v\|^2 \|\Phi(s)\|^2\,\pi(x)\mu(s)\\
&\le \|v\|^2\sum_{x \in V(G)}\pi(x)\sum_{s\in S}\kappa^2|s|^2\,\mu(s)=\kappa^2\|v\|^2\sum_{s \in S}|s|^2\,\mu(s)<\infty.
\end{align*}
This shows the first claim.
The second claim follows since
\[
\sum_{e \in E(G)}\|\Phi_e\|\,c(e) \le \kappa\sum_{s \in S}|s|\,\mu(s)<\infty,
\]
and $\dr{c}{\wh{v}}=\sum_{e \in E(G)}c(e)\langle v, \Phi_e\rangle=\langle v, \zeta_c\rangle$.
\qed

\bigskip

We will assume that $\mu$ has finite second moment in the sequel.
The following lemma, which will be used in Section \ref{sec:ns}, indicates how the transfer $\vartheta$ comes into the play (cf.\ Section \ref{sec:transfer}).
Informally speaking, if the transfer $\vartheta$ has zero image, then there is no ``drift'';
patterns of directions encoded by the weighted diagram are canceled out on average.
We show the lemma in a form including the case when $\im(\vartheta)\neq\{0\}$.
Note that the stationary distribution $\pi$ is uniform in the case when $\Lambda\backslash \Gamma=F$ is a finite group (which we assume eventually, cf.\ Section \ref{sec:transfer}).
Recall that $\Lambda$ is identified with $\Z^m$ in $\R^m$, and that $S=\supp \mu$.

\begin{lemma}\label{lem:diagonal}
Let $(G, c)$ be the weighted diagram of the $\mu$-random walk on $\Gamma$. 
Let us consider the orthogonal decomposition
\[
\R^m=\mathrm{span}(\im(\vartheta))\oplus \im(\vartheta)^\perp.
\]
Let $v \in \R^m$.
Denote $\wh{v}=u+df$ where $u$ is the unique harmonic part in $H^1$. Then,
\[
v \in \mathrm{im}(\vartheta)^\perp \quad \Longleftrightarrow  \sum_{x\in V(G)}u(x, s)=0 \quad \text{for all $s \in S$}.
\]
In particular, if $\pi$ is uniform on $V(G)$, then $\dr{c}{\wh{v}}=\dr{c}{u}=0$ for all $v \in \im(\vartheta)^\perp$.
\end{lemma}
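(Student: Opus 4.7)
The plan is to unfold the definitions and express the quantity $\sum_{x\in V(G)}\wh v(x,s)$ directly in terms of the transfer $\vartheta(s)$. Two complementary computations do all the work. First, by identity~\eqref{eq:phialpha}, $\wh v(x,s)=\br{v,\alpha(x,s)}$, so summing over $x\in\Delta=V(G)$ and using the definition of the transfer yields
\[
\sum_{x\in V(G)}\wh v(x,s)=\br{v,\,\sum_{x\in\Delta}\alpha(x,s)}=\br{v,\vartheta(s)}.
\]
Second, the contribution of the exact part $df$ to such a sum vanishes: since $\Lambda$ is normal in $\Gamma$, the map $x\mapsto[xs]$ is a permutation of $V(G)=\Lambda\backslash\Gamma$, whence $\sum_{x\in V(G)}df(x,s)=\sum_{x\in V(G)}(f([xs])-f(x))=0$. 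Writing $\wh v=u+df$ and combining these two identities gives
\[
\sum_{x\in V(G)}u(x,s)=\br{v,\vartheta(s)}\quad\text{for every }s\in S.
\]

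The equivalence in the lemma then falls out from this display. The direction $(\Rightarrow)$ is immediate. For $(\Leftarrow)$, the hypothesis that $S$ generates $\Gamma$ as a semigroup, together with $\vartheta$ being a homomorphism, shows that $\vartheta(S)$ generates the subgroup $\im(\vartheta)\subset\Z^m$ additively, so $\br{v,\vartheta(s)}=0$ for all $s\in S$ forces $\br{v,\vartheta(\gamma)}=0$ for every $\gamma\in\Gamma$, i.e.\ $v\in\im(\vartheta)^\perp$.

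For the final ``in particular'' assertion, assume $\pi$ is uniform. Then $c(e)=\mu(s)/|V(G)|$ for $e=(x,s)$, and rearranging (justified by the absolute convergence in Lemma~\ref{lem:second}) gives
\[
\dr{c}{\wh v}=\frac{1}{|V(G)|}\sum_{s\in S}\mu(s)\br{v,\vartheta(s)},
\]
which vanishes whenever $v\in\im(\vartheta)^\perp$. The identity $\dr{c}{\wh v}=\dr{c}{u}$ is then the last assertion of Lemma~\ref{lem:decomposition}. I do not anticipate any genuine obstacle in this argument; the only point demanding care is the bootstrap from ``$\br{v,\vartheta(s)}=0$ on $S$'' to ``$v\perp\im(\vartheta)$'', which rests precisely on the semigroup-generation hypothesis.
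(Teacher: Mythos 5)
Your proposal is correct and follows essentially the same route as the paper: compute $\sum_{x}\wh v(x,s)=\br{v,\vartheta(s)}$ from the cocycle/transfer definitions, show $\sum_x df(x,s)=0$ because right multiplication by $s$ permutes the cosets $V(G)=\Lambda\backslash\Gamma$ (the paper phrases this as the $s$-labeled edges forming a union of cycles, which is the same fact), and then use semigroup generation by $S$ to pass between $\vartheta(S)^\perp$ and $\im(\vartheta)^\perp$. One tiny point: your appeal to normality of $\Lambda$ when asserting that $x\mapsto[xs]$ permutes $V(G)$ is superfluous — right multiplication permutes right cosets of any subgroup — but this is harmless.
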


\proof
Observe that $\sum_{x\in V(G)}df(x, s)=0$ for each $s\in S$.
Indeed, in $G$, the directed edges labeled $s$ form a (disjoint) union of cycles: $e_0, \dots, e_{n-1}$ such that $o e_i=t e_{i+1}$ for all $i \mod n$.
This follows since the vertex set consists of the right cosets $\Lambda \backslash \Gamma$.
The sum of $df$ along each circle equals $0$,
and thus the indicated sum equals $0$. We have for all $s\in S$,
\[
\sum_{x\in V(G)}u(x,s)=\sum_{x\in V(G)}\wh{v}(x,s)=\sum_{x\in V(G)}\langle v,\alpha(x,s)\rangle=\langle v,\vartheta(s)\rangle.
\]
As $S$ is a generating set of $\Gamma$, the span of $\vartheta(S)$ equals the span of the image of $\vartheta$. This concludes the first claim.

We have $\dr{c}{\wh{v}}=\dr{c}{u}$ by Lemma \ref{lem:decomposition}.
If $\pi$ is uniform on $V(G)$,
then for $v \in \im(\vartheta)^\perp$,
\[
\dr{c}{u}=\sum_{e \in E(G)}c(e)u(e)=\sum_{s\in S}\pi(x)\mu(s)\sum_{x \in V(G)}u(x, s)=0,
\]
where we have used the first claim in the last equality.
\qed

\subsection{The normalized transfer homomorphism as a projection}

The results in this subsection will be used in Section \ref{sec:ns}.

The short exact sequence (\ref{eq:exactseq}) provides an action of $F$ on $\Lambda$ by conjugacy. 
Indeed, if $\gamma=vx$ with $v\in \Lambda$, then for every $\lambda \in \Lambda$, as $\Lambda$ is normal and abelian, we have
\[
\gamma\lambda\gamma^{-1}=vx\lambda x^{-1}v^{-1}=x\lambda x^{-1} \in \Lambda.
\]
This action of $F$ is by automorphisms of the torsion free abelian $\Lambda$, isomorphic to $\mathbb{Z}^m$ for some $m\ge1$. Under this identification, $F$ is isomorphic to a finite subgroup of ${\rm GL}(m,\mathbb{Z})$, acting on $\mathbb{R}^m$ where $\Z^m$ is identified with $\Lambda$. It follows that we can endow $\mathbb{R}^m$ with a scalar product preserved by the $F$-action. We get the adjoint representation $\mathrm{Ad}:F\to {\rm O}(m,\mathbb{R})$ in the orthogonal group.

We define the \emph{normalized transfer homomorphism} $\uth:\R^m \to \R^m$ by
\[
\uth(v):=\frac{1}{\#F}\sum_{f \in F} \mathrm{Ad}(f)v.
\]

\begin{lemma}\label{lem:projector}
The normalised transfer homomorphism $\uth$ is the orthogonal projection of $\R^m$ onto the span of the image of the transfer homomorphism $\vartheta$. In particular, $\mathrm{im}(\uth)=\mathrm{span}(\mathrm{im}(\vartheta))$.
\end{lemma}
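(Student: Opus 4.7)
The plan is to split the lemma into two parts: (i) $\uth$ is the orthogonal projection of $\R^m$ onto the $F$-invariant subspace $(\R^m)^F:=\{v\in\R^m : \mathrm{Ad}(f)v=v \text{ for all } f\in F\}$, and (ii) $(\R^m)^F=\mathrm{span}(\im(\vartheta))$.

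For (i), since $\mathrm{Ad}:F\to \mathrm{O}(m,\R)$ is a homomorphism into the orthogonal group for the chosen $F$-invariant inner product, idempotence $\uth\circ\uth=\uth$ follows by the reindexing $g\mapsto gf$ in $F$, and self-adjointness $\uth^\ast=\uth$ follows from $\mathrm{Ad}(f)^\ast=\mathrm{Ad}(f^{-1})$ combined with $f\mapsto f^{-1}$. Together with linearity, these identify $\uth$ as an orthogonal projection whose image is exactly $(\R^m)^F$.

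For the inclusion $(\R^m)^F\subseteq\mathrm{span}(\im(\vartheta))$, I would restrict $\vartheta$ to $\Lambda$. For $\lambda\in\Lambda$, the normality of $\Lambda$ forces $x^\lambda=x$, so $\alpha(x,\lambda)=x\lambda x^{-1}=\mathrm{Ad}([x])\lambda$, and hence $\vartheta(\lambda)=\sum_{x\in\Delta}\mathrm{Ad}([x])\lambda=\#F\cdot\uth(\lambda)$. Since $\Lambda$ has rank $m$ in $\R^m$, we obtain $(\R^m)^F=\uth(\R^m)=\uth(\mathrm{span}(\Lambda))\subseteq\mathrm{span}(\im(\vartheta))$.

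For the reverse inclusion $\im(\vartheta)\subseteq(\R^m)^F$, I fix $g\in\Gamma$ and consider the group homomorphism
\[
\Psi_g:\Gamma\to\R^m,\qquad \Psi_g(\gamma):=\mathrm{Ad}([g])\vartheta(\gamma)-\vartheta(\gamma).
\]
Using the coset decomposition $gx=\lambda_x\cdot y_x$ with $\lambda_x\in\Lambda$ and $y_x\in\Delta$, together with the abelianness of $\Lambda$, one checks that $(gx)\lambda(gx)^{-1}=y_x\lambda y_x^{-1}$ for every $\lambda\in\Lambda$; since $x\mapsto y_x$ is a permutation of $\Delta$ (using that $\Lambda$ is normal), summing yields $\mathrm{Ad}([g])\vartheta(\lambda)=\vartheta(\lambda)$. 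Therefore $\Psi_g$ vanishes on $\Lambda$ and factors through the finite quotient $F=\Gamma/\Lambda$; the target $\R^m$ being torsion-free forces $\Psi_g\equiv 0$, which gives $\mathrm{Ad}([g])\vartheta(\gamma)=\vartheta(\gamma)$ for every $g,\gamma\in\Gamma$. The main conceptual point is this last step: rather than manipulating the cocycle $\alpha(x,\gamma)$ directly for arbitrary $\gamma$, the torsion-freeness of $\R^m$ reduces $F$-invariance of $\vartheta$ on all of $\Gamma$ to the easy case $\gamma\in\Lambda$.
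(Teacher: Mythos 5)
Your argument is correct. The first half — verifying self-adjointness and idempotence of $\uth$, and using the identity $\alpha(x,\lambda)=x\lambda x^{-1}=\mathrm{Ad}([x])\lambda$ for $\lambda\in\Lambda$ to obtain $\vartheta|_\Lambda=\#F\cdot\uth|_\Lambda$ — matches the paper's argument exactly. Where you diverge is in finishing the identification $\im(\uth)=\mathrm{span}(\im(\vartheta))$. The paper records only the restriction $\vartheta|_\Lambda=\#F\,\uth|_\Lambda$, which immediately gives $\mathrm{span}(\vartheta(\Lambda))=\uth(\mathrm{span}(\Lambda))=\uth(\R^m)=\im(\uth)$; the remaining inclusion $\vartheta(\gamma)\in\mathrm{span}(\vartheta(\Lambda))$ for general $\gamma\in\Gamma$ is left implicit (it follows, for example, from $\gamma^{\#F}\in\Lambda$ and $\#F\,\vartheta(\gamma)=\vartheta(\gamma^{\#F})\in\vartheta(\Lambda)$). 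You close this step by a genuinely different route: you identify $\im(\uth)$ with the $F$-fixed subspace $(\R^m)^F$ and then prove $\im(\vartheta)\subseteq(\R^m)^F$ directly, via the auxiliary homomorphism $\Psi_g=\mathrm{Ad}([g])\circ\vartheta-\vartheta$, which you show vanishes on $\Lambda$ and hence factors through the finite group $F$ into the torsion-free group $\R^m$, forcing $\Psi_g\equiv 0$. This is a clean alternative that has a modest extra payoff: it shows $\im(\vartheta)$ is pointwise $F$-invariant, which is slightly stronger than the span equality asserted in the lemma, and it avoids any divisibility-by-$\#F$ manipulation. The cost is a bit more setup (introducing $(\R^m)^F$ and $\Psi_g$), whereas the paper's route, once one notes $\gamma^{\#F}\in\Lambda$, reaches the span equality in one line.
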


\begin{proof}
The scalar product is chosen so that $\mathrm{Ad}(F)$ is orthogonal, so $\uth$ is self-adjoint:
\[
\left\langle \uth(v),u \right\rangle = \frac{1}{\#F}\sum_{f\in F} \langle \mathrm{Ad}(f)v,u\rangle=\frac{1}{\#F}\sum_{f\in F} \langle v, \mathrm{Ad}(f^{-1})u\rangle=\left\langle v,\uth(u)\right\rangle \quad \textrm{for }v,u \in \mathbb{R}^m.
\]
Moreover, as  $f'\mapsto ff'$ is a bijection of $F$,  we have
\begin{align*}
\uth^2(v) &= \frac{1}{(\#F)^2}\sum_{f,f' \in F} \mathrm{Ad}(f) \mathrm{Ad}(f') v=\frac{1}{\#F}\sum_{f \in F} \frac{1}{\#F} \sum_{f'\in F} \mathrm{Ad}(ff') v = \uth(v),
\end{align*}
so $\uth$ is a projection. Finally, as 
\begin{equation}\label{eq:transad}
\alpha(x,v)=xvx^{-1}=\mathrm{Ad}(\Lambda x)v \quad \textrm{for }x\in \Delta, v\in \Lambda,
\end{equation}
we get $\vartheta(v)=\#F\uth(v)$.
\end{proof}

Recall that for $v\in \mathbb{R}^m$, the $1$ form $\wh v(e):=\br{v, \Phi_e}=\br{v,\alpha(x,s)}$, with $e=(x,s)$,
decomposes as $\wh v([x],s)=u(x,s)+df(x,s)$ by Lemma~\ref{lem:decomposition}.

In the next lemma, we assume that the virtually abelian group $\Gamma$ is a semi-direct product $\Gamma =\Lambda \rtimes F$.

\begin{lemma}\label{lem:ucst}
Assume that $\Gamma =\Lambda \rtimes F$ is a semi-direct product and that $\Delta$ is a subgroup isomorphic to $F$. Let $v\in \mathrm{im}(\uth)$. Then for each $s \in \Gamma$, the harmonic part $u(x,s)$ of the $1$-form $\wh v$ does not depend on $x$.
\end{lemma}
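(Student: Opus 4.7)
The plan is to compute $\wh v$ explicitly from the semi-direct product structure and show that it already lies in $H^1$ whenever $v$ is $F$-invariant, so that the uniqueness in Lemma~\ref{lem:decomposition} forces $u=\wh v$ and $df=0$.

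First I would unpack the hypothesis $v\in \im(\uth)$. Since $\uth=\frac{1}{\#F}\sum_{f\in F}\mathrm{Ad}(f)$ is an idempotent whose image is the subspace of $F$-invariants in $\R^m$, the assumption is equivalent to $\mathrm{Ad}(x)v=v$ for every $x\in\Delta$ (under the identification $\Delta\cong F$).

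Next I would exploit that $\Gamma=\Lambda\rtimes F$ with $\Delta$ a subgroup. Every $s\in\Gamma$ decomposes uniquely as $s=wy$ with $w\in\Lambda$ and $y\in\Delta$, so $\Phi(s)=w$. For $x\in\Delta$, the product $xs=xwy=(xwx^{-1})(xy)$ is already a coset decomposition because $xy\in\Delta$, hence $\alpha([x],s)=\mathrm{Ad}(x)\Phi(s)$. Using the orthogonality of $\mathrm{Ad}$ and the $F$-invariance of $v$, we find
\[
\wh v([x],s)=\br{v,\mathrm{Ad}(x)\Phi(s)}=\br{\mathrm{Ad}(x)^{-1}v,\Phi(s)}=\br{v,\Phi(s)},
\]
which is already independent of $x$.

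To conclude, I would verify $\wh v\in H^1$. Since $\Lambda$ is normal, $V(G)=F$ is a group and $\pi$ is uniform. A direct computation using $c([x],s)=\pi(x)\mu(s)$ gives
\[
d^\ast\wh v(x)=-\sum_{s\in S\cup S^{-1}}\mu(s)\br{v,\Phi(s)},\qquad \dr{c}{\wh v}=\sum_{s\in S\cup S^{-1}}\mu(s)\br{v,\Phi(s)},
\]
both constant in $x$, and their sum vanishes. Hence $\wh v\in H^1$, and by uniqueness in Lemma~\ref{lem:decomposition} we conclude $u=\wh v$, so $u([x],s)=\br{v,\Phi(s)}$ does not depend on $x$. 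The main subtlety is exactly where the subgroup hypothesis on $\Delta$ enters: for a general transversal, $xy$ would not sit in $\Delta$ and the clean formula $\alpha([x],s)=\mathrm{Ad}(x)\Phi(s)$ would acquire correction terms breaking the $x$-independence.
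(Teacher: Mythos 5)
Your proof is correct and essentially reproduces the paper's argument. Your formula $\alpha([x],s)=\mathrm{Ad}(x)\Phi(s)$ packages in one step what the paper obtains by splitting $s=\lambda y$ and handling $\alpha(x,\lambda)$ and $\alpha(x,y)$ separately, and your use of the $F$-invariance $\mathrm{Ad}(x)v=v$ replaces the paper's self-adjointness manipulation $\langle v,\mathrm{Ad}(x)\lambda\rangle=\langle v,\uth\lambda\rangle$; both yield the same quantity $\langle v,\Phi(s)\rangle$. You additionally spell out why constancy of $\wh v(x,s)$ in $x$ forces $\wh v\in H^1$ and hence $u=\wh v$ by uniqueness in Lemma~\ref{lem:decomposition}, a step the paper leaves implicit but which is a clean way to pass from $\wh v$ to its harmonic part.
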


\begin{proof}
Write $s=\lambda y$ with $\lambda \in \Lambda$ and $y \in \Delta$. By the cocycle relation we have
\[
\wh v(x,s)=\langle v, \alpha(x,\lambda y)\rangle = \langle v,\alpha(x,\lambda)\rangle +\langle v,\alpha(x,y)\rangle,
\]
as $x^\lambda=x$ and $x\lambda=(x\lambda x^{-1})x$, where we note that $\Lambda$ is a normal subgroup in $\Gamma$. 
On the one hand, using Lemma~\ref{lem:projector} and (\ref{eq:transad}), we have
\begin{align*}
\langle v,\alpha(x,\lambda)\rangle &=\langle \uth(v),\mathrm{Ad}(\Lambda x) \lambda \rangle = \langle v,\uth\mathrm{Ad}(\Lambda x) \lambda \rangle \\ &= \left\langle v,\frac{1}{\#F} \sum_{f \in F}\mathrm{Ad}(f)\mathrm{Ad}(\Lambda x)\lambda \right\rangle=\langle v,\uth(\lambda)\rangle,
\end{align*}
independent of $x$.
On the other hand, as $\Gamma$ is a semi-direct product with $\Delta$ isomorphic to $F$, we have $\alpha(x,y)=\Phi(xy)-\Phi(x)=0$, because $xy\in \Delta$ and $\Phi(x)=0$ for all $x \in \Delta$ by definition.
\end{proof}

\section{Local CLTs on virtually abelian groups}\label{sec:lcltmain}

We establish local CLTs for random walks on finitely generated infinite virtually abelian groups.
In this section, we keep the notations for weighted diagrams $(G, c)$ associated to random walks on virtually abelian groups in the previous Section~\ref{sec:pre}.
In Section \ref{sec:perturbation},
we define transfer operators, discuss their perturbations on weighted diagrams,
and use them to establish the variance formula in terms of harmonic forms in the local CLT.
In Section \ref{sec:char},
we introduce characteristic functions to apply the Fourier analysis and discuss the periodicity problem arising in Markov chains.
In Section \ref{sec:lclt},
we establish the local CLT based on the transfer operator method.

\subsection{Transfer operators and their perturbations}\label{sec:perturbation}

For each $\omega \in C^1(G, \R)$,
the \emph{transfer operator} on $C^0(G, \C)$ is defined by
\[
\Lc_\omega f(x):=\sum_{e:oe=x}p(e)e^{2\pi i\omega(e)}f(te) \quad \text{for $x\in V(G)$}.
\]
In the definition, $i=\sqrt{-1}$ and $\pi$ denotes the half of the circumference of the unit circle.
Note that the operator norm of $\Lc_\omega$ is uniformly bounded $\|\Lc_\omega\|\le 1$ for all $\omega \in C^1(G, \R)$.
The map $\omega\mapsto \Lc_\omega$ defines a twice continuously differentiable ($C^2$-)map from $\ell_c^2(G, \R)$ to the complex Banach space of bounded operators on $C^0(G, \C)$, where the space of operators is a finite dimensional space of matrices.
Since $\Lc_0=P$ and $P$ has a simple eigenvalue $1$,
we have an open neighborhood $U$ of $0$ in $\ell_c^2(G, \R)$ and a $C^2$-function $\lambda: U \to \R$ such that $\Lc_\omega$ has a simple eigenvalue $\lambda(\omega)$ for all $\omega \in U$ and $\lambda(0)$=1.
This follows from the implicit function theorem in the Hilbert space applied to the characteristic polynomial.
Furthermore, the corresponding eigenfunction $f_\omega$ can be chosen so that $\omega\mapsto f_\omega$ is a $C^2$-map from $U$ to $C^0(G, \R)$ and that $f_\omega$ is normalized, i.e.\ $\br{f_\omega, \1}_\pi=1$ for all $\omega \in U$.
In summary,
we have $\lambda(0)=1$, $f_0=\1$,
\[
\Lc_\omega f_\omega=\lambda(\omega)f_\omega \quad \text{and} \quad \br{f_\omega, \1}_\pi=1 \quad \text{for all $\omega \in U$}.
\]
Moreover, $\lambda(\omega)$ and $f_\omega$ are $C^2$-maps in $\omega \in U$ respectively. Replacing $U$ with a smaller open neighborhood of $0$ if necessary, we take a branch of the logarithm and define
\[
\beta(\omega):=\log \lambda(\omega)\quad \text{for $\omega \in U$ such that $\beta(0)=0$}.
\]

The following observation is crucial to our discussion:
If $\omega \in \ell_c^2(G, \R)$ and $\f \in C^0(G, \R)$,
then $\omega+d\f\in \ell_c^2(G, \R)$ and $\Lc_{\omega+d\f}=e^{-2\pi i \f}\Lc_\omega e^{2\pi i\f}$ where $e^\f f(x):=e^{\f(x)}f(x)$ for $x \in V(G)$.
Therefore for all $\omega \in U$ and all $\f \in C^0(G, \R)$,
\[
\lambda(\omega+d\f)=\lambda(\omega).
\]
This implies that $\lambda(\omega)$ and thus $\beta(\omega)$ depend only on the harmonic part of $\omega$ in the decomposition (cf.\ Lemma \ref{lem:decomposition}).

\begin{lemma}\label{lem:pressure}
Let $r, r_i$, $i=1, 2$, be real parameters.
\begin{itemize}
\item[(1)]\label{it:1}
For every $1$-form $\omega \in \ell_c^2(G, \R)$,
\begin{equation}\label{eq:lem:pressure1}
\frac{d}{dr}\Big|_{r=0}\beta(r\omega)=2\pi i\dr{c}{\omega}.
\end{equation}
\item[(2)]\label{it:2}
For every harmonic $1$-form $u$ in $\ell_c^2(G, \R)$,
\begin{equation}\label{eq:lem:eigen}
\frac{d}{dr}\Big|_{r=0}f_{ru}(x)=0 \quad \text{for all $x\in V(G)$}.
\end{equation}
\item[(3)]\label{it:3}
For all harmonic $1$-forms $u_i$, $i=1, 2$, in $\ell_c^2(G, \R)$,
\begin{align}\label{eq:lem:pressure2}
\frac{\partial^2}{\partial r_1\partial r_2}&\Big|_{(r_1, r_2)=(0, 0)}\beta(r_1 u_1+r_2 u_2) \nonumber\\ &=-4\pi^2\left(\sum_{e\in E(G)}u_1(e)u_2(e)c(e)-\sum_{e\in E(G)}u_1(e)c(e)\sum_{e\in E(G)}u_2(e)c(e)\right)
\end{align}
\item[(4)]\label{it:4}
If $c(e)=c(\wb e)$ for all $e \in E(G)$,
then all (existing) odd time derivatives of $\beta$ at $0$ vanish.
\end{itemize}
\end{lemma}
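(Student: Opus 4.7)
The plan is to exploit the eigenvalue equation $\Lc_\omega f_\omega = \lambda(\omega) f_\omega$ together with the normalization $\br{f_\omega, \1}_\pi = 1$, and take inner products against $\1$ to isolate derivatives of $\lambda$. For (1), I would differentiate the eigen-equation once in the direction $\omega$ at $r=0$ to obtain
\[
\dot\Lc\,\1 + P\dot f = \dot\lambda\,\1 + \dot f,
\]
where $\dot\Lc f(x) = 2\pi i \sum_{e:oe=x} p(e)\omega(e)f(te)$. Pairing with $\1$ and using $\br{Pg,\1}_\pi=\br{g,\1}_\pi$ (stationarity of $\pi$) together with $\br{\dot f,\1}_\pi=0$ (differentiating the normalization) gives $\dot\lambda = \br{\dot\Lc\,\1,\1}_\pi = 2\pi i\sum_e c(e)\omega(e) = 2\pi i\dr{c}{\omega}$, and then $\dot\beta=\dot\lambda$ since $\lambda(0)=1$.

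For (2), taking $\omega = u$ harmonic I would rearrange the first-order equation as $(I-P)\dot f = \dot\Lc\,\1 - \dot\lambda\,\1$. The key observation is that
\[
\dot\Lc\,\1(x) = 2\pi i\sum_{e:oe=x}p(e)u(e) = -2\pi i\, d^\ast u(x) = 2\pi i\dr{c}{u} = \dot\lambda,
\]
using exactly the defining identity $d^\ast u + \dr{c}{u}=0$ of $H^1$. Hence $(I-P)\dot f = 0$, so $\dot f$ is constant by irreducibility, and the normalization forces $\dot f \equiv 0$.

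For (3), I would differentiate the eigen-equation a second time. Because the first derivatives $f_i|_0$ vanish by (2), all cross terms disappear and only $\Lc_{12}\,\1 + P f_{12} = \lambda_{12}\,\1 + f_{12}$ remains. Pairing again with $\1$ yields $\lambda_{12} = \br{\Lc_{12}\,\1,\1}_\pi = -4\pi^2\sum_e c(e)u_1(e)u_2(e)$. Converting derivatives of $\beta=\log\lambda$ to those of $\lambda$ at the base point (where $\lambda=1$) gives $\beta_{12} = \lambda_{12} - \lambda_1\lambda_2$, and plugging in (1) produces exactly the claimed formula.

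For (4), I would exploit the symmetry $\omega(\wb e) = -\omega(e)$ together with $c(\wb e)=c(e)$. A direct computation shows that $\Lc_\omega$ is self-adjoint with respect to $\br{\cdot,\cdot}_\pi$ under reversibility, so $\lambda(\omega)\in\R$. On the other hand, $\overline{\Lc_\omega f}=\Lc_{-\omega}\bar f$ always gives $\lambda(-\omega)=\overline{\lambda(\omega)}$. Combining these, $\lambda(-\omega)=\lambda(\omega)$, so $r\mapsto\beta(r\omega)$ is even (as an analytic function of $r$ on the finite-dimensional line through $\omega$), whence all odd derivatives at $0$ vanish. The main obstacle I anticipate is the delicate bookkeeping in step (2): one must recognize that the obstruction to $\dot f$ being trivial is exactly the harmonicity condition $d^\ast u+\dr{c}{u}=0$, so that the computation genuinely uses $u\in H^1$ rather than being an artifact of any $1$-form; the rest of the argument is mechanical once this step is in place.
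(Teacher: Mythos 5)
Your proposal is correct and follows essentially the same route as the paper: differentiate the eigen-equation, pair with $\1$ using $P^\ast\1=\1$ and the normalization $\br{f_\omega,\1}_\pi=1$, and for the harmonic case invoke irreducibility of $P$ to kill the first derivative of $f$. The only cosmetic difference is in (3), where you differentiate $\Lc f=\lambda f$ and convert to $\beta=\log\lambda$ afterwards, while the paper differentiates $\Lc f=e^{\beta} f$ directly and picks up the $\beta_1\beta_2$ term from the exponential; these are the same computation reorganized.
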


\proof
For each $1$-form $\omega \in U$,
we have $r\omega \in U$ for all small enough $r$.
The normalization $\br{f_{r\omega}, \1}_\pi=1$ implies that 
\begin{equation}\label{eq:lem:eigen1}
\br{\frac{d}{dr}\Big|_{r=0}f_{r\omega}, \1}_\pi=0.
\end{equation}
Furthermore, since $P^\ast \1=\1$,
this \eqref{eq:lem:eigen1} shows that
\begin{equation}\label{eq:lem:eigen2}
\br{P\left(\frac{d}{dr}\Big|_{r=0}f_{r\omega}\right), \1}_\pi
=\br{\frac{d}{dr}\Big|_{r=0}f_{r\omega}, P^\ast \1}_\pi
=\br{\frac{d}{dr}\Big|_{r=0}f_{r\omega}, \1}_\pi=0.
\end{equation}
Note that identities analogous to \eqref{eq:lem:eigen1} and \eqref{eq:lem:eigen2} also  hold  for second derivatives (and in fact for any order derivatives if they exist) by the normalization condition on $f_{r\omega}$.

Differentiating $\Lc_{r\omega}f_{r\omega}=e^{\beta(r\omega)}f_{r\omega}$ at $r=0$, we obtain for each $x \in V(G)$,
\begin{equation}\label{eq:lem:pressure:diff}
\sum_{e:oe=x}\left(p(e)(2\pi i\omega(e))+p(e)\frac{d}{dr}\Big|_{r=0}f_{r\omega}(te)\right)
=\frac{d}{dr}\Big|_{r=0}\beta(r\omega)+\frac{d}{dr}\Big|_{r=0}f_{r\omega}(x),
\end{equation}
where we have used $f_0=\1$ and $\beta(0)=0$.
Taking the inner products with $\1$ in both sides yields by \eqref{eq:lem:eigen1} and \eqref{eq:lem:eigen2},
\[
\sum_{x\in V(G)}\pi(x)\sum_{e:oe=x}p(e)(2\pi i\omega(e))=\frac{d}{dr}\Big|_{r=0}\beta(r\omega).
\]
The left hand side equals $2\pi i \dr{c}{\omega}$, and this shows \eqref{eq:lem:pressure1}.

Let us take a harmonic $1$-form $\omega=u$ in $\ell_c^2(G, \R)$.
By \eqref{eq:lem:pressure:diff} and \eqref{eq:lem:pressure1},
we have
\[
-2\pi id^\ast u (x)+P\left(\frac{d}{dr}\Big|_{r=0}f_{ru}\right)(x)=2\pi i \dr{c}{u}+\frac{d}{dr}\Big|_{r=0}f_{ru}(x).
\]
Since $u$ is a harmonic $1$-form, i.e.\ $d^\ast u+\dr{c}{u}=0$,
the above equation implies that
\[
P\left(\frac{d}{dr}\Big|_{r=0}f_{ru}\right)(x)=\frac{d}{dr}\Big|_{r=0}f_{ru}(x) \quad \text{for each $x \in V(G)$}.
\]
Furthermore, since $P$ is irreducible, $(d/dr)|_{r=0}f_{r u}$ is a constant, which has to be $0$ by \eqref{eq:lem:eigen1}.
This shows \eqref{eq:lem:eigen}.

For all small enough $r_i$ for $i=1, 2$,
letting $\beta_{r_1, r_2}:=\beta(r_1u_1+r_2u_2)$ and $f_{r_1, r_2}:=f_{r_1u_1+r_2u_2}$ for harmonic $1$-forms $u_i$,
we have for each $x\in V(G)$,
\[
\Lc_{r_1 u_1+r_2 u_2}f_{r_1, r_2}(x)=e^{\beta_{r_1, r_2}}f_{r_1, r_2}(x).
\]
Computing the second derivatives at $(r_1, r_2)=(0, 0)$ yields by \eqref{eq:lem:eigen},
\begin{align*}
&-4\pi^2\sum_{e\in E_x}p(e)u_1(e)u_2(e)
+\sum_{e:oe=x}p(e)\frac{\partial^2}{\partial r_1\partial r_2}\Big|_{(0, 0)}f_{r_1, r_2}(x)\\
&\qquad \qquad=\frac{\partial^2}{\partial r_1\partial r_2}\Big|_{(0, 0)}\beta_{r_1, r_2}+\frac{\partial^2}{\partial r_1\partial r_2}\Big|_{(0, 0)}f_{r_1, r_2}(x) + \left(\frac{\partial}{\partial r_1}\Big|_{(0, 0)}\beta_{r_1, r_2} \right)\left(\frac{\partial}{\partial r_2}\Big|_{(0, 0)}\beta_{r_1, r_2}\right).
\end{align*}
Taking the inner products with $\1$ in both sides and using (\ref{eq:lem:pressure1}) yields
\[
-4\pi^2\left(\sum_{e\in E(G)}c(e)u_1(e)u_2(e)-\sum_{e\in E(G)}c(e)u_1(e)\sum_{e\in E(G)}c(e)u_2(e)\right)=\frac{\partial^2}{\partial r_1\partial r_2}\Big|_{(r_1, r_2)=(0, 0)}\beta_{r_1, r_2}.
\]
To deduce the equality, we have used the analogous identities for second derivatives to \eqref{eq:lem:eigen1} and \eqref{eq:lem:eigen2}.
This concludes \eqref{eq:lem:pressure2}.

Furthermore, if $c(e)=c(\wb e)$ for all $e \in E(G)$,
then $\Lc_\omega$ is self-adjoint for each $1$-form~$\omega$.
Hence $\lambda(\omega)$ is real,
and since $\wb{\Lc_\omega f_\omega}=\Lc_{-\omega}\wb{f_\omega}$,
we have $\lambda(-\omega)=\wb{\lambda(\omega)}=\lambda(\omega)$ for all $\omega \in U$.
This implies that all odd time derivatives of $\beta$ at $0$ vanish, concluding the last claim.
\qed

\bigskip

Let us consider the transfer operators $\Lc_{\wh v}$ with \emph{potentials} $\wh v$ associated with $\Phi$ for $v \in \R^m$.
For brevity,
let us denote $\lambda(v):=\lambda(\wh v)$ and $\beta(v):=\beta(\wh v)$
for all $v$ near $0$ in $\R^m$.

\begin{lemma}\label{lem:hessian}
Let us denote the Hessian of $\beta$ at $0$ in $\R^m$ by
\[
\Hess_0\beta:=\left(\frac{\partial^2}{\partial r_i \partial r_j}\Big|_{(r_1, \dots, r_m)=(0, \dots, 0)}\beta(r_1, \dots, r_m)\right)_{i, j=1, \dots, m}.
\]
The $\Hess_0 \beta$ is non-degenerate and negative definite, and satisfies
\begin{equation}\label{eq:lem:hessian}
\br{v_1, \Hess_0 \beta\, v_2}=-4\pi^2 \left(\sum_{e\in E(G)}u_1(e)u_2(e)c(e)-\sum_{e\in E(G)}c(e)u_1(e)\sum_{e\in E(G)}c(e)u_2(e)\right)
\end{equation}
where $u_i$ is the unique harmonic part of $\wh v_i$ for all $v_i \in \R^m$, $i=1, 2$.
\end{lemma}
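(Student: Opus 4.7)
The strategy is to derive~\eqref{eq:lem:hessian} as a direct consequence of Lemma~\ref{lem:pressure}(3) together with the gauge invariance of $\beta$, and then to establish negative definiteness (hence non-degeneracy) via a Cauchy--Schwarz analysis followed by a loop argument exploiting the non-degeneracy of $\mu$ and the rank of $\Lambda$.

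\textbf{Formula~\eqref{eq:lem:hessian}.} Since $v\mapsto\wh v$ is $\R$-linear, the chain rule gives
\[
\br{v_1,\Hess_0\beta\, v_2}=\frac{\partial^2}{\partial r_1\partial r_2}\Big|_{(0,0)}\beta(r_1\wh{v_1}+r_2\wh{v_2}).
\]
By the gauge invariance $\beta(\omega+d\f)=\beta(\omega)$ recorded in Section~\ref{sec:perturbation}, we may replace each $\wh{v_i}$ by its harmonic part $u_i$ from Lemma~\ref{lem:decomposition} without altering the right-hand side. Applying Lemma~\ref{lem:pressure}(3) to the harmonic pair $(u_1,u_2)$ then yields~\eqref{eq:lem:hessian}.

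\textbf{Negative semidefiniteness.} Setting $v_1=v_2=v$ and letting $u$ denote the harmonic part of $\wh v$, and observing that $\sum_{e\in E(G)}c(e)=1$ so that $c$ is a probability measure on $E(G)$, the right-hand side of~\eqref{eq:lem:hessian} becomes
\[
\br{v,\Hess_0\beta\, v}=-4\pi^2\Bigl(\sum_{e\in E(G)} c(e)u(e)^2-\Bigl(\sum_{e\in E(G)} c(e)u(e)\Bigr)^{2}\Bigr)=-4\pi^2\,\Var_c(u)\le 0,
\]
by Cauchy--Schwarz, with equality iff $u$ is constant on $\supp(c)$, say $u(e)=\lambda$ for every $e$ with $c(e)>0$.

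\textbf{Non-degeneracy, the main obstacle.} Assume $\br{v,\Hess_0\beta\, v}=0$, so $u\equiv\lambda$ on $\supp(c)$. By the semigroup generation hypothesis, the identity admits a non-empty expression $\id=s_{i_1}\cdots s_{i_k}$ with $s_{i_j}\in S$ and $k\ge 1$ (for instance, fix $s\in S$ and expand $s^{-1}$ as a product of $S$-elements, so that $\id=s\cdot s^{-1}$ has such a factorisation). Summing $\wh v=u+df$ along the lift of this word to a loop in $\Cay(\Gamma,S)$ based at $\id$, one side equals $\br{v,\Phi(\id)-\Phi(\id)}=0$, while the other equals $k\lambda$, because every edge of the loop lies in $\supp(c)$ and $df$ telescopes to zero; hence $\lambda=0$. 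Thus $u$ vanishes on $\supp(c)$, and the 1-form identity $u(\bar e)=-u(e)$ combined with the observation that every $e\in E(G)$ satisfies $c(e)>0$ or $c(\bar e)>0$ (its label lying in $S\cup S^{-1}$) forces $u\equiv 0$, whence $\wh v=df$ for some $f\in C^0(G,\R)$. Finally, for any $\gamma\in\Gamma$ written as a product $s_{j_1}\cdots s_{j_\ell}$ of elements of $S$, telescoping $\wh v=df$ along the corresponding path in $\Cay(\Gamma,S)$ starting at $\id$ yields $\br{v,\Phi(\gamma)}=f([\gamma])-f([\id])$, which is bounded uniformly in $\gamma$. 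Choosing $\id\in\Delta$, the restriction to $\gamma\in\Lambda\cong\Z^m$ satisfies $\Phi(\gamma)=\gamma$, so $\br{v,\cdot}$ is a bounded linear functional on $\Z^m$, forcing $v=0$. This last step---where the rank-$m$ structure of $\Lambda$ and the semigroup generation by $\supp(\mu)$ are indispensable---is the real obstacle to the argument.
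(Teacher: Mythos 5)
Your proof is correct and follows essentially the same route as the paper: gauge invariance of $\beta$ plus Lemma~\ref{lem:pressure}(3) for the formula, the variance identity for negative semidefiniteness, and a telescoping argument along lifted paths to force $v=0$. Where you add genuine value is in the passage from $\Var_c(u)=0$ to $u\equiv 0$. The paper disposes of it with the terse sentence ``the unique harmonic part $u$ is constant, hence zero, because it is a $1$-form,'' which is only a complete argument when $\supp\mu$ is symmetric: the variance condition pins $u$ to a constant $\lambda$ only on $\supp(c)$, i.e.\ on edges with labels in $S$, and the $1$-form identity on reversed edges does not by itself give $\lambda=0$ when $S\neq S^{-1}$. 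Your loop argument, based on a nonempty relator $\id=s_{i_1}\cdots s_{i_k}$ in the semigroup generated by $S$ (which exists because the support generates $\Gamma$, not merely a sub-semigroup), is exactly what is needed to conclude $\lambda=0$ in general, after which the observation that every edge of $E(G)$ has $c(e)>0$ or $c(\bar e)>0$ upgrades $u\equiv 0$ on $\supp(c)$ to $u\equiv 0$. Your final step uses boundedness of $\gamma\mapsto\br{v,\Phi(\gamma)}$ where the paper notes the stronger fact that this quantity vanishes for $\gamma\in\Lambda$ (the projected path is a loop based at $[\id]$, so $f([\gamma])-f([\id])=0$), but either reasoning forces $v=0$.
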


\proof
For all $v \in \R^m$,
the associated $1$-form $\wh v$ is in $\ell_c^2(G, \R)$ by Lemma \ref{lem:second},
and thus $\wh v$ has a unique harmonic part $u$ by Lemma \ref{lem:decomposition}.
Lemma \ref{lem:pressure} \eqref{eq:lem:pressure2} implies \eqref{eq:lem:hessian}.
Suppose that $\br{v, \Hess_0\beta\,v}=0$.
By \eqref{eq:lem:hessian}, we have
\[
0=\sum_{e\in E(G)} u(e)^2c(e)-\Bigg(\sum_{e\in E(G)}u(e)c(e)\Bigg)^2
=\sum_{e \in E(G)}c(e)\Bigg(u(e)-\sum_{e\in E(G)}c(e)u(e)\Bigg)^2.
\]
The unique harmonic part $u$ of $\wh v$ is constant, hence zero, because it is a $1$-form; and thus $\wh v=df$ for some $f \in C^0(G, \R)$ by Lemma \ref{lem:decomposition}.
Let us consider a directed edge path in $\Cay(\Gamma, S)$ from $\id$ to $w \in \Z^m=\Lambda$ in $\Gamma$ and its projection $(e_1, \dots, e_n)$ in $E(G)$.
Since the projection is a loop,
we have (cf.\ \eqref{eq:period} below for details about the fourth equality)
\[
0=\sum_{i=1}^n df(e_i)=\sum_{i=1}^n \wh v(e_i)=\sum_{i=1}^n\br{v, \Phi_{e_i}}=
\br{v, w}.
\]
Running $w$ over a basis in $\Z^m$ yields $v=0$.
Therefore $\Hess_0 \beta$ is non-degenerate.
The formula \eqref{eq:lem:hessian} shows that $\Hess_0\beta$ is negative definite.
\qed

\subsection{Characteristic functions}\label{sec:char}

Let us define $\1_x:V(G) \to \R$ by $\1_x(y)=1$ for $y=x$ and $0$ for $y\neq x$.
For each positive integer $n$ and for all $x \in V(G)$, we can compute explicitly iterates of the perturbed transfer operator
\[
\Lc_{\wh v}^n\1_{x}(x_0)=\sum p(e_1)\cdots p(e_n)\exp\left(2\pi i (\wh v(e_1)+\cdots+\wh v(e_n))\right).
\]
The above summation runs over all paths $(e_1, \dots, e_n)$ from $x_0$ to $x$, $o e_1=x_0$, $te_i=o e_{i+1}$ for $i=1, \dots, n-1$ and $te_n=x$.
A lift of $(e_1, \dots, e_n)$ is defined by a path $(\wt e_1, \dots, \wt e_n)$ in $\Cay(\Gamma, S)$ with $t\wt e_i=o\wt e_{i+1}$ for $i=1, \dots, n-1$
and $\wt e_i \mapsto e_i$ via the covering map $\Cay(\Gamma, S) \to \Lambda\backslash \Cay(\Gamma, S)$.
For such a lift, we have
\begin{equation}\label{eq:period}
\sum_{i=1}^n \wh v(e_i)=\sum_{i=1}^n \br{v, \Phi_{e_i}}=\sum_{i=1}^n \br{v, \Phi(t\wt e_i)-\Phi(o\wt e_i)}=\br{v, \Phi(t\wt e_n)-\Phi(o\wt e_1)}.
\end{equation}
Taking all the lifts starting from $\id$,
we obtain by $o\wt e_1=\id$ and $\Phi(\id)=0$,
\[
\Lc_{\wh v}^n \1_{x}(x_0)=\sum \mu(s_1)\cdots \mu(s_n)\exp\left(2\pi i\br{v, \Phi(t\wt e_n)}\right)=\sum_{\gamma \in \Gamma, [\gamma]=x}\mu_n(\gamma)\exp\left(2\pi i\br{v, \Phi(\gamma)}\right),
\]
where $s_i$ are the labels of each path read along.
The second equality holds since the map assigning to a path from $x_0$ to $x$ a lift starting from $\id$ of length $n$ terminating at $\gamma$ with $[\gamma]=x$ is a bijection.
Note that $\Lc_0^n\1_{x}(x_0)$ is simply the probability that the random walk on $\Gamma$ started in $\id$ projects to $x \in V(G)$ at time $n$.

We consider the distribution $\Phi_\ast \mu_n:=\mu_n\circ \Phi^{-1}$ of $\Phi(w_n)$, which is the image by $\Phi$ into $\Lambda=\Z^m$ of the $\mu$-random walk on $\Gamma$ at time $n$. Its \emph{characteristic function} is given by
\[
\f_{\Phi_\ast\mu_n}(v):=\sum_{\gamma \in \Gamma} \mu_n(\gamma)\exp\left(2\pi i \langle v,\Phi(\gamma)\rangle \right)=\sum_{x\in V(G)} \Lc_{\wh v}^n \1_{x}(x_0).
\]
Since $\Phi(\Gamma)=\Z^m$,
the function $\f_{\Phi_\ast\mu_n}$ is defined on $\R^m/\Z^m$,
which will also be identified with a fundamental domain
\[
D:=\left[-\frac{1}{2}, \frac{1}{2}\right)^m \quad \text{in $\R^m$}.
\]
The Fourier inversion formula yields
\[
(\Phi_\ast\mu_n)(\lambda)=\int_{\R^m/\Z^m} \sum_{x\in V(G)} \Lc_{\wh v}^n \1_{x}(x_0) e^{-2\pi i\langle v,\lambda\rangle}dv \quad \text{for all $\lambda \in \Z^m$}.
\]
It also gives the following fact.

\begin{fact}
For each $\gamma \in \Gamma$,
\begin{equation}\label{eq:fourier_inversion}
\mu_n(\gamma)=\int_{\R^m/\Z^m} \Lc^n_{\wh{v}}\1_{[\gamma]}(x_0)e^{-2\pi i\langle v, \Phi(\gamma)\rangle}dv.
\end{equation}
\end{fact}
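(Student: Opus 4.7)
The plan is to combine the explicit formula
\[
\Lc_{\wh v}^n \1_{[\gamma]}(x_0)=\sum_{\gamma' \in \Gamma,\ [\gamma']=[\gamma]}\mu_n(\gamma')\exp\!\bigl(2\pi i\langle v, \Phi(\gamma')\rangle\bigr)
\]
derived just above in the excerpt with Fourier inversion (character orthogonality) on the torus $\R^m/\Z^m$. Concretely, I would multiply both sides of the displayed formula by $e^{-2\pi i\langle v,\Phi(\gamma)\rangle}$ and integrate over $v \in \R^m/\Z^m \cong D$.

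Interchanging the (absolutely convergent, in fact finite-mass) sum with the integral is immediate, since the exponentials have modulus one and $\sum_{\gamma'}\mu_n(\gamma')\le 1$, so Fubini applies. Each term contributes
\[
\mu_n(\gamma')\int_{\R^m/\Z^m}\exp\!\bigl(2\pi i\langle v,\Phi(\gamma')-\Phi(\gamma)\rangle\bigr)\,dv.
\]
Because $\Phi$ takes values in $\Lambda=\Z^m$, the difference $\Phi(\gamma')-\Phi(\gamma)$ lies in $\Z^m$, and the standard character orthogonality on $\R^m/\Z^m$ evaluates the integral to $\1_{\{\Phi(\gamma')=\Phi(\gamma)\}}$.

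The final observation is that within a fixed coset $[\gamma]=[\gamma']$ the map $\gamma'\mapsto\Phi(\gamma')$ is injective: using the unique decompositions $\gamma=vx$, $\gamma'=v'x'$ with $v,v'\in\Lambda$ and $x,x'\in\Delta$, the equality $\Lambda\gamma=\Lambda\gamma'$ forces $x=x'$, and then $\Phi(\gamma')=\Phi(\gamma)$ gives $v'=v$, so $\gamma'=\gamma$. Hence only the term $\gamma'=\gamma$ survives, and the integral equals $\mu_n(\gamma)$, as desired. There is no substantial obstacle here; the identity is a straightforward repackaging of Fourier inversion on each right coset, the transfer-operator formula having already done the bookkeeping of grouping elements by $[\gamma]$.
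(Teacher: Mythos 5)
Your proof is correct and follows essentially the same route as the paper: expand $\Lc_{\wh v}^n\1_{[\gamma]}(x_0)$ as a sum over $\gamma'$ in the coset $\Lambda\gamma$, interchange sum and integral via Fubini, and invoke character orthogonality on $\R^m/\Z^m$. The only cosmetic difference is that you spell out the injectivity of $\Phi$ on a fixed coset, which the paper leaves implicit.
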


\proof
Let us compute for each $\gamma_0 \in \Gamma$,
\begin{equation*}
\int_{\R^m/\Z^m}  \Lc^n_{\wh{v}}\1_{x}(x_0)e^{-2\pi i\langle v, \Phi(\gamma_0)\rangle}dv 
%& = \int_{\R^m/\Z^m} \sum_{\gamma \in \Gamma, [\gamma]=x}\mu_n(\gamma) e^{2\pi i\langle v,\Phi(\gamma)- \Phi(\gamma_0)\rangle}dv \\&
=\sum_{\gamma \in \Gamma, [\gamma]=x}\mu_n(\gamma)  \int_{\R^m/\Z^m} e^{2\pi i\langle v,\Phi(\gamma)- \Phi(\gamma_0)\rangle}dv.
\end{equation*}
The series is absolutely convergent as $\mu_n$ is a probability measure. The integral term vanishes when $\Phi(\gamma)-\Phi(\gamma_0)\neq 0$ as $\Phi$ takes integer values, and otherwise it takes value one. If $x=[\gamma_0]$, we get $\mu_n(\gamma_0)$ as required. Otherwise all terms are zero.
\qed

\medskip

We define the \emph{period} $q$ of a probability measure $\mu$ on $\Gamma$ by the greatest common divisor of all possible lengths of nontrivial directed loops at $\id$ in $\Cay(\Gamma, S)$ where $S=\supp \mu$.
We say that $\mu$ is \emph{aperiodic} if the period of $\mu$ is $1$.
This is equivalent to say that there exists $N$ such that $\mu_n(\id)>0$ for every $n \ge N$.
For example, $\mu$ is aperiodic if $\mu(\id)>0$.

For $\delta>0$,
let
\[
D_\delta:=\Bigl\{v=(v_1, \dots, v_m) \in \R^m \ : \ |v_i|<\delta, \ i=1, \dots, m\Bigr\}.
\]

\begin{lemma}\label{lem:aperiodic}
If $\mu$ is aperiodic,
then for each $\delta>0$,
there exist constants $c_\delta, C_\delta>0$ such that for all $n\in \N$ and for all $x\in V(G)$,
\[
\sup_{v \in D\setminus D_\delta}|\Lc_{\wh v}^n \1_{x}(x_0)|\le C_\delta \exp\left(-c_\delta n\right).
\]
\end{lemma}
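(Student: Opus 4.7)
The plan is to show that $\Lc_{\wh v}$, acting on the finite-dimensional space $C^0(G,\C)$, has spectral radius strictly less than $1$ for every $v\in D\setminus\{0\}$, and then to deduce uniform exponential decay on the compact set $\overline{D\setminus D_\delta}$ by upper semicontinuity of the spectral radius together with a compactness argument.

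The crucial step is the spectral gap off the origin. Suppose $\Lc_{\wh v}f=\lambda f$ with $|\lambda|=1$ and $f\neq 0$. Since $\sum_{e:oe=x}p(e)=1$, the triangle inequality gives $|f|\le P|f|$ pointwise; pairing against the stationary measure $\pi$ forces equality, and irreducibility of $P$ on the finite vertex set makes $|f|$ constant, normalized to~$1$. The equality case then implies that the complex numbers $e^{2\pi i\wh v(e)}f(te)$ all share the argument of $\lambda f(x)$ as $e$ ranges over edges out of $x$; writing $f=e^{2\pi i\theta}$ and $\lambda=e^{2\pi i\lambda'}$ produces the modular identity
\[
\wh v(e)+d\theta(e)\equiv \lambda'\pmod\Z \quad \text{for all } e\in E(G).
\]
Summing along a loop $(e_1,\dots,e_n)$ at $x_0$ in $G$ lifting to a path $\id\to\gamma$ in $\Cay(\Gamma,S)$, the $d\theta$-terms telescope to $0$ and by \eqref{eq:period} the $\wh v$-terms contribute $\br{v,\Phi(\gamma)}$, so $\br{v,\Phi(\gamma)}\equiv n\lambda'\pmod\Z$. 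Aperiodicity supplies random-walk loops at $\id$ in $\Cay(\Gamma,S)$ of coprime lengths, for which $\gamma=\id$ forces $\lambda'\in\Z$ by B\'ezout, i.e.\ $\lambda=1$; the identity then reads $\br{v,\Phi(\gamma)}\in\Z$. Since $S$ generates $\Gamma$ as a semigroup, every $\gamma\in\Lambda=\Z^m$ is reached from $\id$ by a forward path in $\Cay(\Gamma,S)$, giving $\br{v,\Z^m}\subset\Z$, so $v\in\Z^m$, and then $v=0$ because $v\in D$.

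For the uniform decay I observe that the matrix entries of $\Lc_{\wh v}$ are absolutely convergent sums, continuous in $v$ by dominated convergence, and $\Z^m$-periodic since $\Phi_e\in\Z^m$; thus $v\mapsto \Lc_{\wh v}$ descends to a continuous matrix-valued map on the torus $\R^m/\Z^m$, and the image of $\overline{D\setminus D_\delta}$ is compact there. By the previous step the spectral radius is pointwise $<1$ on this compact set, so upper semicontinuity yields a uniform bound $r(\Lc_{\wh v})\le \rho_\delta<1$. In finite dimension, a standard covering by neighborhoods with adapted norms (obtained from Jordan decompositions so that the operator norm at each base point is within $\e$ of the spectral radius) upgrades this into uniform constants $C_\delta,c_\delta>0$ with $\|\Lc_{\wh v}^n\|_\infty\le C_\delta e^{-c_\delta n}$; applying this to $\1_x$, whose sup-norm is $1$, gives the claim. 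The principal difficulty is the first step, where aperiodicity is used to pin down $\lambda=1$ and semigroup generation is used to pin down $v=0$ from the modular identity; the compactness argument is routine perturbative spectral theory.
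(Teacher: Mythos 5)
Your proof is correct and follows essentially the same route as the paper's: first show $\rad(\Lc_{\wh v})<1$ for $v\in D\setminus\{0\}$ by observing that a peripheral eigenvalue forces $|f|$ to be constant (via irreducibility of $P$) and hence forces the modular identity $\wh v(e)+d\theta(e)\equiv\lambda'\pmod\Z$, which, summed over lifted paths in $\Cay(\Gamma,S)$, contradicts aperiodicity and semigroup generation unless $\lambda=1$ and $v=0$; then upgrade this to a uniform exponential bound on $D\setminus D_\delta$ by upper semicontinuity of the spectral radius and compactness (using $\Z^m$-periodicity and absolute convergence/dominated convergence to get continuity of the matrix entries). The only cosmetic differences are that you separate the two tasks — loops at $\id$ of coprime lengths plus B\'ezout to pin $\lambda=1$, then forward paths to a basis of $\Z^m$ to pin $v=0$ — whereas the paper takes paths to a fixed $w$ of two consecutive large lengths, and that for the uniform bound you invoke Jordan-adapted local norms while the paper works directly with a fixed power $N_v$ and submultiplicativity of $\|\Lc_{\wh v}^n\|$; both are standard and interchangeable.
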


\proof
Note that $v \mapsto \Lc_{\wh v}$ is continuous and that $\|\Lc_{\wh v}\|:=\max_{\|f\|=1}\|\Lc_{\wh v}f\|$ depends continuously on $v \in \R^m$,
where $\|\cdot\|$ denotes the induced norm in $C^0(G, \C)$.
The spectral radius is obtained by $\rad(v)=\lim_{n \to \infty}\|\Lc_{\wh v}^n\|^{1/n}$ for each $v \in \R^m$,
where the limit exists by the sub-multiplicativity of $n \mapsto \|\Lc_{\wh v}^n\|$.
This shows that $v \mapsto \rad(v)$ is upper semi-continuous,
in particular, $\rad(v)$ attains its maximum on each compact set.

If $\mu$ is aperiodic,
then $\rad(v)<1$ for $v \in D\setminus\{0\}$.
Indeed, let $\lambda(v)$ be the maximal eigenvalue in absolute value and $f_v$ be a corresponding (nonzero) eigenfunction.
We have $|\lambda(v)|=\rad(v)$ and $\Lc_{\wh v}f_v=\lambda(v)f_v$ by definition.
If $|\lambda(v)|=1$,
then $|f_v|\le P|f_v|$.
Since $P$ is irreducible on the finite set $V(G)$,
the function $|f_v|$ is a nonzero constant.
Thus, $\lambda(v)f_v(x)$ and $e^{2\pi i\br{v, \Phi_e}}f_v(te)$ lie on a common circle in the complex plane for all $x\in V(G)$ and all $e\in E_x$.
Since $\lambda(v)f_v(x)=\sum_{e:oe=x}p(e)e^{2\pi i\br{v, \Phi_e}}f_v(te)$,
we have
\[
\lambda(v)f_v(x)=e^{2\pi i \br{v, \Phi_e}}f_v(te) \quad \text{for all $x \in V(G)$ and all $e$ with $oe=x$}.
\]
If $\mu$ is aperiodic,
then for each $w \in \Z^m=\Lambda$
there exists a directed path from $\id$ to $w$ in $\Cay(\Gamma, S)$ of all large enough length $n$.
Projecting the path to $G$, we have $\Phi(w)=w$ since we assume that $\id \in \Delta$,
and $\lambda(v)^nf_v(x_0)=e^{2\pi i\br{v, w}}f_v(x_0)$.
This yields $\lambda(v)^n=\lambda(v)^{n+1}$ by taking paths of length $n$ and $n+1$ from $\id$ to $w$, and thus $\lambda(v)=1$ and $\br{v, w} \in \Z$ since $|f_v|$ is a nonzero constant.
Taking $w$ over a basis of $\Z^m$,
we obtain $v=0$ since $v \in D$.
This shows that if $|\lambda(v)|=1$, then $v=0$.

Therefore, if $\mu$ is aperiodic,
then for each $\delta>0$, there exists a constant $c_\delta>0$ such that $\rad(v)\le e^{-c_\delta}$ for all $v \in D\setminus D_\delta$.
Furthermore, for each $v \in D\setminus D_\delta$,
there exists $N=N_v$ such that
$\|\Lc_{\wh v}^{N}\|^{1/N} \le e^{-c_\delta/2}$.
Since $v \mapsto \Lc_{\wh v}$ is continuous,
there exists an open set $U_v$ in $D\setminus D_\delta$ containing $v$ such that $\|\Lc_{\wh w}^{N}\|^{1/N}\le e^{-c_\delta/4}$ for every $w \in U_v$.
For every $n \in \N$, 
letting $n=k N+l$ for some $k \in \N$ and some $0 \le l<N$,
we have
\[
\|\Lc_{\wh w}^n\|\le \|\Lc_{\wh w}\|^l \cdot \|\Lc_{\wh w}^{N}\|^k
\le \exp\left(-\frac{c_\delta}{4}k N\right)= e^{c_\delta l/4}\exp\left(-\frac{c_\delta}{4}n\right) \le e^{c_\delta N/4}\exp\left(-\frac{c_\delta}{4}n\right).
\]
In the second inequality, we have used $\|\Lc_{\wh w}\|\le 1$.
By compactness of $D\setminus D_\delta$ under the identification $D=\R^m/\Z^m$,
we have finitely many open sets $U_v$ that cover $D\setminus D_\delta$.
Hence there exist constants $c_\delta', C_\delta>0$ such that
$\|\Lc_{\wh w}^n\|\le C_\delta \exp\left(-c_\delta' n\right)$ for all $w \in D\setminus D_\delta$ and for all $n \in \N$. 
We have a constant $C_\delta'$ such that
\[
|\Lc_{\wh v}^n \1_{x}(x_0)|\le \|\Lc_{\wh v}^n\|\sqrt{\pi(x)/\pi(x_0)}
\le C_\delta'\exp(-c_\delta' n)
\]
for all $v \in D\setminus D_\delta$, for all $x\in V(G)$ and all $n\in \N$, showing the claim.
\qed

\begin{remark}
Lemma \ref{lem:aperiodic} and its proof are known in the literature e.g.\ \cite[p.93, Section 5]{ks}.
We have provided a self-contained proof adapted to the current setting.
\end{remark}

\subsection{A concentration inequality}

Recall the $\Lambda$-equivariant map $\Phi:\Gamma \to \Z^m$ and $\zeta_c=\sum_{e \in E(G)}c(e)\Phi_e$ (cf.\ Section \ref{sec:potential}). The image under $\Phi$ of the random walk at time $n$ is concentrated around $n\zeta_c$.

\begin{lemma}\label{lem:martingale}
Let $\mu$ be a probability measure on $\Gamma$ of finite second moment,
and $\{w_n\}_{n \in \N}$ be a $\mu$-random walk starting from $\id$ on $\Gamma$.
There exists a constant $C>0$ such that for all $n \in \N$ and all real $r>0$,
\[
\Prob\Bigl(\|\Phi(w_n)-n\zeta_c\| \ge r \Bigr)\le \frac{C n}{r^2}.
\]
\end{lemma}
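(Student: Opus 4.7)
The plan is to run a standard Chebyshev argument after decomposing $\Phi(w_n) - n\zeta_c$ into two martingales plus a bounded remainder, using the Poisson equation on the finite quotient $V(G)$.

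First I would write $\Phi(w_n) = \sum_{k=1}^n \xi_k$ with increments $\xi_k := \Phi(w_k) - \Phi(w_{k-1}) = \alpha([w_{k-1}], \gamma_k)$ (using \eqref{eq:phialpha}). Conditionally on $w_{k-1}$, the increment $\xi_k$ has law $s \mapsto \alpha([w_{k-1}],s)$ under $\mu$, so
\[
\E[\xi_k \mid w_{k-1}] = b([w_{k-1}]), \qquad b(x) := \sum_{s \in S} \mu(s)\,\alpha(x,s).
\]
By definition of the conductance $c$ and Lemma~\ref{lem:second},
$\sum_{x \in V(G)} \pi(x) b(x) = \sum_{e \in E(G)} c(e) \Phi_e = \zeta_c$,
so $g := b - \zeta_c$ has vanishing mean under $\pi$.

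Next I decompose
\[
\Phi(w_n) - n\zeta_c = \sum_{k=1}^n \bigl(\xi_k - b([w_{k-1}])\bigr) + \sum_{k=1}^n g([w_{k-1}]) =: M_n + R_n.
\]
The term $M_n$ is a martingale with respect to the filtration of $(w_k)$, and the finite second moment hypothesis combined with the Lipschitz bound $\|\Phi_e\| \le \kappa|s|$ from the proof of Lemma~\ref{lem:second} gives $\E[\|\xi_k\|^2 \mid w_{k-1}] \le \kappa^2 \sum_s \mu(s)|s|^2 < \infty$, hence $\E[\|M_n\|^2] \le C_1 n$. For $R_n$, I would solve the Poisson equation $(I-P)h = g$ on the finite state space $V(G)$: since $P$ is irreducible and $g$ has $\pi$-mean zero, $h := \sum_{k\ge 0} P^k g$ converges (geometric spectral gap of $P$ restricted to the mean-zero subspace) and is bounded on $V(G)$. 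Writing $g([w_{k-1}]) = (h([w_{k-1}]) - h([w_k])) + (h([w_k]) - \E[h([w_k])\mid w_{k-1}])$ and summing yields $R_n = h([w_0]) - h([w_n]) + \wt M_n$ with $\wt M_n$ a martingale whose increments are bounded by $2\|h\|_\infty$, so $\E[\|\wt M_n\|^2] \le C_2 n$.

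Combining, $\E[\|\Phi(w_n)-n\zeta_c\|^2] \le 3 \bigl(\E\|M_n\|^2 + \E\|\wt M_n\|^2 + 4\|h\|_\infty^2\bigr) \le Cn$ for a constant $C$ depending only on $\mu$ and on the (finite) geometry of $(G,c)$. Markov's inequality then yields the claimed bound $\Prob(\|\Phi(w_n)-n\zeta_c\|\ge r)\le Cn/r^2$. No step is truly hard; the one that requires a little care is handling the fact that the increments $\xi_k$ are not centered i.i.d.\ (their conditional mean fluctuates with $[w_{k-1}]$), and this is precisely what the Poisson-equation/telescoping trick on the finite quotient is designed to absorb into a second martingale plus a bounded boundary term.
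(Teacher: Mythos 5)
Your argument is correct and arrives at essentially the same bound, but it separates into two martingales what the paper bundles into one via the harmonic $1$-form decomposition. In the paper, for each basis vector $v$, one writes $\wh v = u + df$ with $u$ harmonic, and the harmonicity condition $d^\ast u + \dr{c}{u} = 0$ is precisely what makes the single process $\wt u_n = \sum_{i}(\wt u(w_{i-1},\gamma_i) - \langle v,\zeta_c\rangle)$ a martingale; the potential $f$ supplies the bounded boundary term. Your decomposition $\Phi(w_n) - n\zeta_c = M_n + R_n$, with $R_n$ then converted into a boundary term plus a second martingale $\wt M_n$ via the Poisson equation $(I-P)h=g$, is the unrolled version of this: unwinding the identity $(I-P)f = d^\ast\wh v + \dr{c}{\wh v} = -\langle v,g\rangle$ shows that $\langle v, M_n + \wt M_n\rangle$ is exactly the paper's $\wt u_n$ and $\langle v, h([w_0])-h([w_n])\rangle$ is the paper's $\wt f(w_n)-\wt f(\id)$. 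So the two proofs are the same calculation organized differently; what you lose by not invoking the harmonic $1$-form is only that the Poisson correction and the centering are handled in two steps rather than one, and what you gain is that the argument is self-contained and does not depend on having already set up the Hodge-type decomposition of Lemma~\ref{lem:decomposition}.

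One small imprecision worth flagging: you define $h = \sum_{k\ge 0} P^k g$ and justify convergence by a ``geometric spectral gap,'' but the lemma does not assume $\mu$ aperiodic, so the quotient chain $P$ on $V(G)$ can be periodic and the Neumann series need not converge (e.g.\ $P$ a $2$-cycle). This does not break the proof, because on a finite irreducible chain $I-P$ is invertible on the $\pi$-mean-zero subspace regardless of periodicity (its kernel is exactly $\R\mathbf{1}$, and $\im(I-P)$ is the orthogonal complement of $\ker(I-P^\ast)=\R\mathbf{1}$), so a bounded $h$ solving $(I-P)h=g$ exists; just define it by inverting the finite matrix rather than by the series. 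With that repair, everything goes through.
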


\proof
For each $v \in \R^m$,
we have $\wh v \in \ell_c^2(G, \R)$ by Lemma \ref{lem:second}.
By Lemma \ref{lem:decomposition},
we have $\wh v=u+df$,
where $u$ is the unique harmonic part of $\wh v$ and $f \in C^0(G, \R)$.
The definition of the harmonic part, the last claim in Lemma \ref{lem:decomposition} and Lemma \ref{lem:second} imply that
\[
0=d^\ast u+\dr{c}{u}=d^\ast u+\dr{c}{\wh v}=d^\ast u+\br{v, \zeta_c}.
\]
Taking lifts $\wt v$, $\wt u$ and $\wt f$ of $\wh v$, $u$ and $f$ respectively,
via $\Cay(\Gamma, S) \to \Lambda\backslash \Cay(\Gamma, S)$,
we obtain
\[
\wt v=\wt u+d\wt f.
\]
Abusing notations, we use $1$-forms and differential $d$ defined on $G$ also on $\Cay(\Gamma, S)$.
For each $\gamma \in \Gamma$, we have
\[
-\sum_{s \in S}\wt u(\gamma, s)\mu(s)+\br{v, \zeta_c}=0.
\]
For a $\mu$-random walk $w_n=\gamma_1\cdots \gamma_n$ starting from $\id$ on $\Gamma$,
for $n \ge 1$,
let
\[
\wt u_n:=\sum_{i=1}^n\Bigl(\wt u(w_{i-1}, \gamma_i)-\br{v, \zeta_c}\Bigr) \quad \text{and} \quad \wt u_0:=0.
\]
This defines a martingale $\{\wt u_n\}_{n \in \N}$ with respect to the natural filtration $\{\Fc_n\}_{n \in \N}$ associated with $\{w_n\}_{n \in \N}$,
i.e., $\E[\wt u_{n+1}\mid \Fc_n]=\wt u_n$ almost surely for each $n \in \N$.
Furthermore, the martingale difference $\wt u_{n+1}-\wt u_n$ is square integrable since $u \in \ell_c^2(G, \R)$ and $\wt u$ is a lift of $u$.
Note that for each $\gamma \in \Gamma$ and for each path $e_1, \dots, e_n$ in $\Cay(\Gamma, S)$ with $oe_1=\id$, $te_i=oe_{i+1}$ for $i=1, \dots, n-1$ and $te_n=\gamma$,
\[
\br{v, \Phi(\gamma)-\Phi(\id)-n\zeta_c}=\sum_{i=1}^n\Bigl(\wt v(e_i)-\br{v, \zeta_c}\Bigr)=\sum_{i=1}^n \Bigl(\wt u(e_i)+d\wt f(e_i)-\br{v, \zeta_c}\Bigr).
\]
The last term equals
$\wt u_n+\wt f(w_n)-\wt f(\id)$ for a random walk path.
This implies that since $\Phi(\id)=0$, for all $n \in \N$,
\begin{equation}\label{eq:lem:martingale}
\br{v, \Phi(w_n)-n\zeta_c}=\wt u_n+\wt f(w_n)-\wt f(\id).
\end{equation}
Moreover, $\wt f$ is uniformly bounded, i.e., $\|\wt f\|_\infty=\|f\|_\infty<\infty$ since $\wt f$ is a lift of $f$.

Since $\{\wt u_n\}_{n \in \N}$ is a martingale with square integrable differences relative to the natural filtration,
by the Chebyshev inequality,
for all $n \in \N$ and for all real $r>0$,
\[
\Prob\Bigl(|\wt u_n|\ge r\Bigr)\le \frac{\E|\wt u_n|^2}{r^2}
\le \frac{C n}{r^2},
\quad \text{where $C:=\max_{x \in V(G)}\sum_{s\in S}|u(x, s)-\br{v, \zeta_c}|^2\,\mu(s)$}.
\]
By \eqref{eq:lem:martingale},
for all $n \in \N$ and all real $r>0$,
\[
\Prob\Bigl(|\br{v, \Phi(w_n)-n\zeta_c}|\ge r+2\|f\|_\infty\Bigr)
\le \Prob\Bigl(|\wt u_n|\ge r\Bigr)\le \frac{C n}{r^2}.
\]
Applying this to each $v$ in the standard basis in $\R^m$ and taking a union bound show that for a constant $C'>0$, for all $n\in \N$ and for all real $r>0$,
\[
\Prob\Bigl(\|\Phi(w_n)-n \zeta_c\| \ge r\Bigr)\le \frac{C' n}{r^2}.
\]
This shows the claim.
\qed

\subsection{Local CLTs}\label{sec:lclt}

We are now ready to state and prove local CLTs on virtually abelian groups.

For a non-degenerate positive definite (covariance) matrix $\Sigma$ of size $m$,
we consider the multivariate normal distribution of variance $\Sigma$:
\[
\xi_\Sigma(v):=\frac{1}{(2\pi)^{\frac{m}{2}}\sqrt{\det \Sigma}}\exp\Bigl(-\frac{1}{2}\br{v, \Sigma^{-1}v}\Bigr) \quad \text{for $v \in \R^m$}.
\]
Further, we associate to it a probability measure on $\Gamma$, given by
\[
\Nc_{n, \Sigma, \zeta_c}(\gamma):=\pi(\mb{x})\frac{1}{\sum_{w \in \Z^m}\xi_{n\Sigma}(w)}\xi_{n\Sigma}(\Phi(\gamma)-n\zeta_c) \quad \text{for $\gamma=v\mb{x}\in \Gamma=\Lambda \Delta$}.
\]

\begin{theorem}[Local CLT]\label{thm:lclt}
Let $\Gamma=\bigsqcup_{x\in \Delta}\Lambda x$ and $\mu$ be a probability measure on $\Gamma$ with finite second moment such that the support generates the group as a semigroup,
and let $(G, c)$ be the corresponding weighted diagram.
If $\mu$ has period $q$,
then
\[
\left\|q^{-1}\sum_{i=0}^{q-1}\mu_{qn+i}-\Nc_{qn, \Sigma, \zeta_c}\right\|_\TV \to 0 \quad \text{as $n \to \infty$}.
\]
In the above, the covariance matrix $\Sigma$ is computed as 
\[
\br{v_1, \Sigma v_2}=\sum_{e \in E(G)}u_1(e)u_2(e)c(e)-\sum_{e\in E(G)}u_1(e)c(e)\sum_{e\in E(G)}u_2(e)c(e) ,
\]
where $u_i$ denotes the unique harmonic part of $\wh v_i$ for $v_i \in \R^m$, $i=1, 2$.
\end{theorem}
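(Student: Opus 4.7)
The plan is to adapt the Nagaev--Guivarc'h perturbation method to the family of transfer operators $\Lc_{\wh v}$ constructed in Section~\ref{sec:perturbation}. The covariance formula is essentially already built into the statement: Lemma~\ref{lem:hessian} identifies the right-hand side of the claimed formula for $\br{v_1,\Sigma v_2}$ as $-\tfrac{1}{4\pi^{2}}\br{v_1,(\Hess_{0}\beta)v_2}$, so the explicit covariance will follow automatically from any Gaussian limit once the Hessian of $\beta$ at the origin is identified as the limiting covariance scale.

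The core step is a pointwise local CLT of the form
\[
\frac{1}{q}\sum_{i=0}^{q-1}\mu_{qn+i}(\gamma)=\pi([\gamma])\,\xi_{qn\Sigma}\bigl(\Phi(\gamma)-qn\zeta_c\bigr)+o\bigl((qn)^{-m/2}\bigr),
\]
uniformly in $\gamma \in \Gamma$. Starting from the Fourier inversion formula \eqref{eq:fourier_inversion}, I split the torus $D=\R^m/\Z^m$ into a small cube $D_\delta$ around the origin and its complement. On $D_\delta$, the construction of Section~\ref{sec:perturbation} furnishes a spectral decomposition $\Lc_{\wh v}^n=\lambda(v)^n\Pi_v+R_v^n$, where $\Pi_v$ is a rank-one spectral projection depending smoothly on $v$ and $R_v$ has spectral radius uniformly below $1$. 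At the origin, $\Pi_0 f=\br{f,\1}_\pi\1$, so $\Pi_v \1_{[\gamma]}(x_0)=\pi([\gamma])+O(\|v\|)$. Combining this with the Taylor expansion
\[
\beta(v)=2\pi i\br{v,\zeta_c}-2\pi^2\br{v,\Sigma v}+O(\|v\|^3),
\]
provided by Lemmas~\ref{lem:pressure} and~\ref{lem:hessian}, the change of variable $v\mapsto v/\sqrt{qn}$ converts the $D_\delta$-portion of the Fourier integral into the Fourier transform of a Gaussian of covariance $qn\Sigma$ centered at $qn\zeta_c$, recovering $\pi([\gamma])\xi_{qn\Sigma}(\Phi(\gamma)-qn\zeta_c)$ in the limit.

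On the complement $D\setminus D_\delta$, Lemma~\ref{lem:aperiodic} gives exponential decay of $|\Lc^n_{\wh v}\1_{[\gamma]}(x_0)|$ directly when $\mu$ is aperiodic. For general period $q$, there may be finitely many exceptional points $v\in D\setminus D_\delta$ at which $\Lc_{\wh v}$ has an eigenvalue on the unit circle equal to a nontrivial $q$-th root of unity, coming from the gcd condition on loop lengths at $\id$. The Ces\`aro average $q^{-1}\sum_{i=0}^{q-1}\Lc^{qn+i}_{\wh v}$ annihilates every such root of unity, so applying the compactness argument of Lemma~\ref{lem:aperiodic} to $\Lc^q_{\wh v}$ delivers the required exponential bound outside $D_\delta$ after averaging, thereby reducing the periodic case to the aperiodic one.

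To upgrade the pointwise estimate to total variation I would split
\[
\sum_{\gamma\in\Gamma}\left|\frac{1}{q}\sum_{i=0}^{q-1}\mu_{qn+i}(\gamma)-\Nc_{qn,\Sigma,\zeta_c}(\gamma)\right|
\]
according to whether $\|\Phi(\gamma)-qn\zeta_c\|\le R_n$ or not, for $R_n$ of order $\sqrt{qn\log n}$. The typical region contains $O(R_n^m)$ lattice points per coset of $\Lambda$, so the uniform pointwise bound $o((qn)^{-m/2})$ integrates to $o(1)$. On the atypical region, Lemma~\ref{lem:martingale} bounds each $\mu_{qn+i}$-mass by $O(qn/R_n^2)=o(1)$, and a standard subgaussian tail estimate controls the Gaussian mass similarly. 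The main technical obstacle is the periodic step above: correctly identifying all unit-modulus eigenvalues of $\Lc_{\wh v}$ and verifying that the Ces\`aro average kills each one outside $v=0$; once that spectral picture is in place, the rest is a routine transfer-operator computation.
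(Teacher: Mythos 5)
Your handling of the aperiodic case and of the passage from the pointwise estimate to total variation mirrors the paper's own argument quite closely: you reproduce Proposition~\ref{prop:lclt} via Fourier inversion, a split of the torus at a small cube $D_\delta$, the Nagaev--Guivarc'h rank-one spectral decomposition near $v=0$, and the quadratic Taylor expansion of $\beta$ from Lemmas~\ref{lem:pressure} and~\ref{lem:hessian}; you then integrate the pointwise bound over a typical region and control the atypical region by the martingale concentration of Lemma~\ref{lem:martingale} and a Gaussian tail. Your cutoff $R_n\sim\sqrt{qn\log n}$ is a perfectly good alternative to the paper's choice $r=\e^{-1/(m+2)}\sqrt{n}$; both give $o(1)$.

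The real divergence, and the place where your sketch is not complete, is the periodic case. You propose to work directly on $D\setminus D_\delta$, locate the finite exceptional set where $\Lc_{\wh v}$ has a unit-modulus eigenvalue, identify those eigenvalues as nontrivial $q$-th roots of unity, and let the Ces\`aro average $q^{-1}\sum_{i=0}^{q-1}\Lc_{\wh v}^{\,qn+i}$ annihilate their contribution. This is a classical strategy and is probably fillable, but you rightly flag it as the main obstacle: one must (i) show that at an exceptional $v\neq 0$ \emph{every} unit-modulus eigenvalue, not only the leading one, is a $q$-th root of unity different from $1$; (ii) deal with the fact that $\supp\mu_q$ need not generate $\Gamma$ as a semigroup, so irreducibility and Perron--Frobenius for $\Lc_{\wh v}^q$ are not automatic; and (iii) track the effect of the Ces\`aro average on the full spectral picture near each exceptional point. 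The paper avoids all of this with a shorter reduction: since $\mu_q$ is aperiodic, it applies the already-proved aperiodic TV estimate to the $\mu_q$-walk started from the averaged initial distribution $q^{-1}(\delta_{\id}+\mu+\cdots+\mu_{q-1})$, which produces exactly the Ces\`aro-averaged distribution $q^{-1}\sum_{i=0}^{q-1}\mu_{qn+i}$, and then removes the $i$-dependence of the Gaussian comparison via the elementary bound $\|\Nc_{qn+i,\Sigma,\zeta_c}-\Nc_{qn,\Sigma,\zeta_c}\|_\TV=O(n^{-1/2})$. That reduction never touches the exceptional set, which is precisely the part your proposal leaves open; you should either carry out the spectral analysis in full or adopt the paper's reduction.
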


The proof of Theorem \ref{thm:lclt} uses the following estimate.

\begin{proposition}\label{prop:lclt}
In the setting of Theorem \ref{thm:lclt},
if $\mu$ is aperiodic,
then
we have
\begin{equation*}%\label{eq:thm:lclt}
\lim_{n \to \infty}\sup_{\gamma \in \Gamma}
n^{\frac{m}{2}}|\mu_n(\gamma)-\pi([\gamma])\xi_{n\Sigma}(\Phi(\gamma)-n\zeta_c)|=0.
\end{equation*}
\end{proposition}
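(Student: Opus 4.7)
The plan is a Fourier-analytic local CLT argument, built on the spectral perturbation of $\Lc_{\wh v}$ developed in Section~\ref{sec:perturbation} and the exponential decay away from $v=0$ furnished by Lemma~\ref{lem:aperiodic}. Starting from the Fourier inversion formula~\eqref{eq:fourier_inversion}, I split the integration domain $D$ into a small neighborhood $D_\delta$ of $0$ and its complement. On $D\setminus D_\delta$, Lemma~\ref{lem:aperiodic} gives $|\Lc_{\wh v}^n\1_{[\gamma]}(x_0)|\le C_\delta e^{-c_\delta n}$ uniformly in $\gamma$, so that part of the integral is already $o(n^{-m/2})$.

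On $D_\delta$ with $\delta$ sufficiently small, I invoke the spectral perturbation of $\Lc_{\wh v}$ around the simple eigenvalue $\lambda(0)=1$ to write $\Lc_{\wh v}^n=\lambda(v)^n\Pi_v+N_v^n$, where $\Pi_v$ is the rank-one spectral projection onto the top eigenspace and $\|N_v^n\|\le C\rho^n$ uniformly in $v\in D_\delta$ for some $\rho<1$; the $N_v^n$ contribution integrates to $O(\rho^n)=o(n^{-m/2})$. At $v=0$, one has $\Pi_0 f=\br{f,\1}_\pi\,\1$, whence $\Pi_0\1_{[\gamma]}(x_0)=\pi([\gamma])$, and by $C^2$-regularity $\Pi_v\1_{[\gamma]}(x_0)=\pi([\gamma])+O(\|v\|)$, uniformly in the finite set $V(G)$. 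The main contribution to be analyzed is
\[
I_n(\gamma):=\int_{D_\delta}e^{n\beta(v)}\Pi_v\1_{[\gamma]}(x_0)\,e^{-2\pi i\br{v,\Phi(\gamma)}}\,dv.
\]

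I then substitute $v=w/\sqrt{n}$ and apply the Taylor expansion at $0$ provided by Lemmas~\ref{lem:pressure}(1) and~\ref{lem:hessian}, namely
\[
\beta(v)=2\pi i\br{v,\zeta_c}-2\pi^2\br{v,\Sigma v}+O(\|v\|^3).
\]
Setting $y_\gamma:=(\Phi(\gamma)-n\zeta_c)/\sqrt{n}$, the $\sqrt{n}$-order drift terms cancel, and the combined exponent becomes $-2\pi i\br{w,y_\gamma}-2\pi^2\br{w,\Sigma w}+O(\|w\|^3/\sqrt{n})$. Since $\Sigma$ is positive definite (Lemma~\ref{lem:hessian}), after shrinking $\delta$ once more the cubic remainder is absorbed into half the quadratic term for $v\in D_\delta$, yielding the integrable majorant $e^{-\pi^2\br{w,\Sigma w}}$. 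Dominated convergence then gives $n^{m/2}I_n(\gamma)\to\pi([\gamma])\xi_\Sigma(y_\gamma)$, and the scaling identity $\xi_{n\Sigma}(\Phi(\gamma)-n\zeta_c)=n^{-m/2}\xi_\Sigma(y_\gamma)$ yields the claim.

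The main technical point is justifying uniformity in $\gamma\in\Gamma$, since $\gamma$ ranges over an infinite set. The crucial observation is that the oscillatory factor $e^{-2\pi i\br{w,y_\gamma}}$ has modulus one: bounding the difference between the main integrand and its Gaussian limit by absolute values removes the $y_\gamma$-dependence, so what remains depends on $\gamma$ only through $[\gamma]$ in the finite set $V(G)$, where uniformity is automatic. Carrying this through, together with checking that the quadratic lower bound on $-\Re\beta$ and the perturbation expansion of $\Pi_v$ hold uniformly on all of $D_\delta$ (and not merely pointwise at $v=0$), is the part of the argument requiring the most care.
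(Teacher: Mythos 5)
Your proof follows essentially the same route as the paper's: Fourier inversion, split $D$ into $D_\delta$ and its complement, discard the far part by Lemma~\ref{lem:aperiodic}, use spectral perturbation on $D_\delta$, rescale $v=w/\sqrt{n}$, Taylor expand $\beta$, and compare with the Gaussian Fourier integral. The uniformity-in-$\gamma$ remark at the end is exactly the point the paper exploits as well, by bounding the discrepancy by absolute values so that the only $\gamma$-dependence left is through $[\gamma]$ in the finite set $V(G)$.

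Two technical points where your write-up overstates what is available under the stated hypotheses. First, you write the Taylor remainder of $\beta$ at $0$ as $O(\|v\|^3)$, which would require $\beta$ to be $C^3$; under the finite second moment assumption the map $\omega\mapsto\Lc_\omega$ is only $C^2$, so $\beta$ is only $C^2$ and the remainder is merely $o(\|v\|^2)$. The paper handles this via the quantified statement in Facts~\ref{fact:lclt}(2)--(3) (``for every $\e>0$ there is $\delta>0$ such that $|\eta(v)|\le\e\|v\|^2$ on $D_\delta$'', followed by a small-$\e$ domination estimate). Your ``absorb into half the quadratic plus dominated convergence'' step still works with this weaker bound --- after rescaling, $n|\eta(w/\sqrt n)|\le\e\|w\|^2$ on $D_{\delta\sqrt n}$, which both dominates and tends to zero pointwise --- but the $O(\|v\|^3)$ claim as written is not justified. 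Second, the paper does not use the abstract rank-one projection $\Pi_v$ directly; it exploits the identity $\Lc_{\wh v}=e^{-2\pi i\f}\Lc_u e^{2\pi i\f}$ from the harmonic decomposition $\wh v = u + d\f$, together with Lemma~\ref{lem:pressure}(2) which gives $f_u(x)=1+O(\|v\|^2)$. This yields the more refined expansion $\pi(x)e^{2\pi i(\f(x)-\f(x_0))}\bigl(1+O(\|v\|^2)\bigr)$ for the projection onto the leading eigenspace, with the $\f$-phase contributing $O(\|v\|/\sqrt n)$ after rescaling. Your cruder $\Pi_v\1_{[\gamma]}(x_0)=\pi([\gamma])+O(\|v\|)$ is correct and suffices here (it integrates to $O(n^{-1/2})$ relative to the main term), so this is a stylistic rather than a substantive difference, but the paper's choice pays off in Remark~\ref{rem:quantitative} where the sharper error rate is extracted.
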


Before proceeding to the proofs, % of the proposition and theorem, 
we record three useful facts.

\begin{facts}\label{fact:lclt}
We keep the notations as above.
\begin{itemize}
\item[(1)]\label{fit:1}
For each $v \in \R^m$, we have $\wh v=u+d\f$ for some $\f \in C^0(G, \R)$ with a constant $C$ independent of $v$ such that
\begin{equation}\label{eq:thm:potential}
\|\f\|_\infty\le C\|v\|.
\end{equation}
\item[(2)]\label{fit:2}
For every $\e>0$, there exists $\delta>0$ such that for all $v \in D_\delta$,
\begin{equation}\label{eq:thm:betaTaylor}
\beta(v)=2\pi i\langle v,\zeta_c\rangle-2\pi^2\langle v,\Sigma v\rangle+\eta(v) \quad \textrm{where  $|\eta(v)|\le \e\|v\|^2$}.
\end{equation}
\item[(3)]\label{fit:3}
For every $\e_0>0$, for all small enough $\e>0$, the following holds: for all $\delta>0$ and all $n$ large enough (depending on $\delta$),
\begin{equation}\label{eq:thm:errorR}
\int_{D_{\delta\sqrt{n}}}e^{-2\pi^2\langle v,\Sigma v\rangle}\left(e^{\e\|v\|^2}-1\right)dv \le \e_0.
\end{equation}
\end{itemize}
\end{facts}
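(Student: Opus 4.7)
\emph{Plan for Fact (1).} Apply Lemma~\ref{lem:decomposition} to write $\wh v = u + df$ with $u \in H^1$ unique and $f \in C^0(G,\R)$ unique up to an additive constant; let $\varphi = \varphi_v$ denote the representative of $f$ orthogonal to $\mathbf{1}$ in $C^0(G,\R)$ with respect to $\br{\cdot,\cdot}_\pi$, so that $v \mapsto \varphi_v$ is linear. Lemma~\ref{lem:second} supplies $\|\wh v\|_c \le C_1 \|v\|$ from the finite second moment hypothesis. Since $H^1$ and $\im d$ are closed in $\ell_c^2(G,\R)$ with $\im d$ finite-dimensional, the projection of $\wh v$ onto $\im d$ along $H^1$ is a bounded linear operator, so $\|d\varphi\|_c \le C_2 \|v\|$. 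Irreducibility of $P$ makes $d$ injective on the complement of $\mathbf{1}$ in the finite-dimensional space $C^0(G,\R)$, hence its inverse is bounded and yields $\|\varphi\|_\infty \le C_3 \|d\varphi\|_c \le C\|v\|$.

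\emph{Plan for Fact (2).} Since $v \mapsto \wh v$ is $\R$-linear into $\ell_c^2(G,\R)$ and $\omega \mapsto \beta(\omega)$ is $C^2$ on an open neighborhood of $0$ by construction, the composite $v \mapsto \beta(v) := \beta(\wh v)$ is $C^2$ near $0 \in \R^m$ and admits a second-order Taylor expansion. The linear term equals $2\pi i \br{v,\zeta_c}$: apply Lemma~\ref{lem:pressure}(1) with $\omega = \wh v$ and use Lemma~\ref{lem:second} to identify $\dr{c}{\wh v}$ with $\br{v,\zeta_c}$. The quadratic term equals $-2\pi^2 \br{v,\Sigma v}$ directly by Lemma~\ref{lem:hessian}, since $\Hess_0 \beta = -4\pi^2 \Sigma$ (note that $\beta$ is invariant under addition of exact forms, so the Hessian in directions $\wh v_1, \wh v_2$ coincides with the Hessian in directions $u_1, u_2$). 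The remainder $\eta(v) = o(\|v\|^2)$ is the Taylor error; given $\varepsilon > 0$, shrinking $\delta$ if needed ensures $|\eta(v)| \le \varepsilon\|v\|^2$ throughout $D_\delta$.

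\emph{Plan for Fact (3).} Positive definiteness of $\Sigma$ from Lemma~\ref{lem:hessian} yields $\lambda := \lambda_{\min}(\Sigma) > 0$, so $\br{v,\Sigma v} \ge \lambda\|v\|^2$. The elementary inequality $e^x - 1 \le x e^x$ for $x \ge 0$ dominates the integrand pointwise by
\[
\varepsilon \|v\|^2\, e^{-(2\pi^2 \lambda - \varepsilon)\|v\|^2}.
\]
For $\varepsilon < \pi^2 \lambda$, this bound is integrable on all of $\R^m$ and the integral equals $\varepsilon$ times a constant $K(\lambda,m)$ independent of $\delta$ and $n$. Choosing $\varepsilon \le \varepsilon_0/K$ makes the full $\R^m$-integral, and hence the integral over $D_{\delta\sqrt n} \subseteq \R^m$, at most $\varepsilon_0$, uniformly in $\delta > 0$ and $n \in \N$; in particular the qualifier ``$n$ large enough'' is automatic.

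\emph{Anticipated obstacle.} None of the three items is deep; each reduces to a standard application of the tools already built in Sections~\ref{sec:wdiagram}--\ref{sec:lcltmain}. The only point requiring some care is the uniform bound in Fact (1), because $\ell_c^2(G,\R)$ may be infinite-dimensional when $\supp\mu$ is infinite; what makes the argument go through is that $\im d$ and $C^0(G,\R)$ are finite-dimensional, and $d$ is injective modulo constants, which together furnish the needed two-sided norm control.
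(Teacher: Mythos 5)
Your proposal is correct throughout. Part~(2) follows the same route as the paper: compose linearity of $v\mapsto \wh v$ with the $C^2$-regularity of $\beta$, identify the first derivative via Lemmas~\ref{lem:pressure}(1) and \ref{lem:second}, the Hessian via Lemma~\ref{lem:hessian}, and invoke Taylor's theorem. Parts~(1) and (3), however, take genuinely different routes.

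For (1), the paper simply expands $v=\sum_i \alpha_i\mb{e}_i$ in the standard basis, takes the individual decompositions $\wh{\mb{e}_i}=u_i+d\f_i$, sets $\f=\sum_i\alpha_i\f_i$, and applies Cauchy--Schwarz to get $C=\sqrt{m}\max_i\|\f_i\|_\infty$. You instead go through functional analysis: the topological direct sum $\ell_c^2(G,\R)=H^1\oplus\im d$ of closed subspaces gives a bounded projection (closed graph theorem), and since $d$ restricted to the orthogonal complement of $\R\1$ in the finite-dimensional $C^0(G,\R)$ is injective, its inverse is bounded. Both work; the paper's version is more elementary and avoids appealing to the topological-complement theorem, while yours makes explicit the structural reason the constant is uniform (linearity of $v\mapsto\varphi_v$ plus two bounded operators). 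For (3), your argument is actually cleaner and slightly stronger than the paper's. The paper splits $D_{\delta\sqrt n}$ into $D_r$ and its complement, controls $D_r$ by a supremum bound of order $\e m r^2 e^{\e m r^2}$, controls the tail by the Gaussian decay $Ce^{-cr^2}$, and then chooses $r$ large, $\e$ small, and finally $n\ge (r/\delta)^2$. You instead dominate the integrand pointwise by $\e\|v\|^2 e^{-(2\pi^2\lambda-\e)\|v\|^2}$ using $e^x-1\le xe^x$ and positive definiteness ($\lambda=\lambda_{\min}(\Sigma)$), then integrate over all of $\R^m$; the resulting bound $\le \e\,K(\lambda,m)$ (an inequality, not the equality you wrote, since the exponent still carries $\e$, but harmlessly so once $\e<\pi^2\lambda$) is uniform in $\delta$ and $n$, so the qualifier ``$n$ large enough'' is indeed unnecessary. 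This is a more economical argument that also removes a spurious dependence on $n$.
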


\proof%[Proof of Facts \ref{fact:lclt}]
For the first point, write $v=\sum_{i=1}^m\alpha_i\mb{e}_i$ with the standard basis $\mb{e}_1, \dots, \mb{e}_m$, and $\alpha_1, \dots, \alpha_m \in \R$. For each $i=1, \dots, m$, taking $\wh{\mb{e}_i}=u_i+d\f_i$, we choose $\f=\sum_{i=1}^m \alpha_i \f_i$, implying \eqref{eq:thm:potential} with  $C=\sqrt{m}\max_{i=1, \dots, m}\|\f_i\|_\infty$ by the Cauchy-Schwarz inequality.

Lemma \ref{lem:pressure} (1) and Lemma \ref{lem:second} 
 show that $(d/dr)|_{r=0}\beta(r v)=2\pi i\dr{c}{\wh v}=2\pi i\langle v,\zeta_c \rangle$,
and Lemma \ref{lem:hessian} shows that the Hessian of $\beta$ at $0$ is $-4\pi^2 \Sigma$.
Since $\beta$ is twice continuously differentiable around $0$, the Taylor theorem implies (\ref{eq:thm:betaTaylor}) with $\eta(v)=o(\|v\|^2)$. There remains to take $\delta$ small enough depending on $\e$ to get the second point.

For the third point, we split the integral according to $0<r<\delta\sqrt{n}$. We have
\[
\int_{D_r}e^{-2\pi^2\langle v,\Sigma v\rangle}\left(e^{\e\|v\|^2}-1\right)dv\le C_\Sigma \sup_{v\in D_r} \left(e^{\e\|v\|^2}-1\right)\le C_\Sigma \e mr^2e^{\e mr^2}.
\]
For $\e>0$ small enough depending on $\Sigma$, we have
\[
\int_{D_{\delta \sqrt{n}}\backslash D_r}e^{-2\pi^2\langle v,\Sigma v\rangle}\left(e^{\e\|v\|^2}-1\right)dv \le \int_{\R^m\backslash D_r}e^{-\pi^2\langle v, \Sigma v \rangle}dv \le Ce^{-cr^2}
\]
for constants $c,C$ depending on $\Sigma$. Choosing $r$ large enough, then $\e$ small enough and finally $n\ge (r/\delta)^2$ we obtain the third point.
\qed

\proof[Proof of Proposition \ref{prop:lclt}]
By the Fourier inversion formula \eqref{eq:fourier_inversion},
we have for $n \in \N$ and for $\gamma \in \Gamma$ with $[\gamma]=x \in V(G)$,
for all $\delta>0$,
\begin{equation}\label{eq:thm:fourier}
\mu_n(\gamma)=\int_{D_\delta}\Lc_{\wh v}^n \1_{x}(x_0)e^{-2\pi i \br{v, \Phi(\gamma)}}\,dv+\int_{D\setminus D_\delta}\Lc_{\wh v}^n \1_{x}(x_0)e^{-2\pi i\br{v, \Phi(\gamma)}}\,dv=: I_1+I_2.
\end{equation}
We will specify $\delta$ later.
If $\mu$ is aperiodic,
then Lemma \ref{lem:aperiodic}
shows that $|I_2| \le C_\delta e^{-c_\delta n}$ for some $c_\delta,C_\delta >0$ and all $n \in \N$.
We will analyze the first term $I_1$. 

Recall that the dominating eigenvalue $e^{\beta(v)}$ of $\Lc_{\wh v}$ depends only on the harmonic part $u$ of $\wh v=u+d\f$ (cf.\ Section \ref{sec:perturbation}).
Since $\Lc_{\wh v}=e^{-2\pi i \f}\Lc_u e^{2\pi i \f}$,
for a small enough $\delta>0$ we have a constant $r_\delta>0$ such that
\[
\Lc_{\wh v}^n \1_{x}(x_0)=e^{n\beta(v)}\br{e^{2\pi i \f}\1_x, f_u}_\pi e^{-2\pi i\f}f_u(x_0)\left(1+O(e^{-r_\delta n})\right) \quad \text{for $v\in D_\delta$},
\]
where $f_u$ denotes the normalized eigenfunction of $\Lc_u$ for the eigenvalue $e^{\beta(v)}$.
By Lemma~\ref{lem:pressure} \eqref{eq:lem:eigen},
we have
$f_{u}(x)=1+O\left(\|v\|^2\right)$
and so
\[
\br{e^{2\pi i\f}\1_x, f_u}_\pi e^{-2\pi i\f}f_u(x_0)
=
\pi(x)e^{2\pi i(\f(x)-\f(x_0))}\left(1+O\left(\|v\|^2\right)\right).
\]
Then we can express
\[
I_1=\pi(x)\int_{D_\delta} e^{n\beta(v)}e^{2\pi i(\f(x)-\f(x_0))}e^{-2\pi i\langle v, \Phi(\gamma)\rangle}\left(1+O(\|v\|^2)+O(e^{-r_\delta n})\right)dv
\]
The change of variable $v \mapsto v/\sqrt{n}$ yields $u\mapsto u/\sqrt{n}$ and $\f\mapsto \f/\sqrt{n}$.
Note that since $\f$ is real-valued, by \eqref{eq:thm:potential} we have
\[
e^{2\pi i (\f(x)-\f(x_0))/\sqrt{n}}=1+O\left(\frac{\|v\|}{\sqrt{n}}\right).
\]
Using also the expression for $\beta(v)$ given in \eqref{eq:thm:betaTaylor}, we obtain
\[
I_1=\frac{\pi(x)}{n^{\frac{m}{2}}}\int_{D_{\delta \sqrt{n}}} e^{\sqrt{n}2\pi i \langle v,\zeta_c\rangle-2\pi^2\langle v,\Sigma v\rangle+n\eta(\frac{v}{\sqrt{n}})} e^{-2\pi i\langle \frac{v}{\sqrt{n}},\Phi(\gamma)\rangle}\left(1+O\left(\frac{\|v\|}{\sqrt{n}}\right)+O(e^{-r_\delta n})\right)dv
\]
Let $\e>0$ and $\delta>0$ be small enough (depending on $\Sigma$) such that \eqref{eq:thm:betaTaylor} gives 
\begin{equation}\label{eq:betaepsilon}
-2\pi^2\langle v, \Sigma v\rangle +|\eta(v)| \le -\pi^2\langle v, \Sigma v\rangle, \quad \text{for all $v \in D_\delta$}.
\end{equation} 
Then the integral of the modulus for the terms $O\left(\|v\|/\sqrt{n}\right)+O(e^{-r_\delta n})$ in the above expression is bounded above by
\[
\int_{D_{\delta\sqrt{n}}} e^{-\pi^2\langle v,\Sigma v\rangle}\left( \frac{\|v\|}{\sqrt{n}}+e^{-r_\delta n}\right) dv=O\left(\frac{1}{\sqrt{n}}\right).
\]
Therefore we get, setting $R_n(v):=e^{n\eta(\frac{v}{\sqrt{n}})}-1$,
\[
I_1=\frac{\pi(x)}{n^{\frac{m}{2}}}\int_{D_{\delta \sqrt{n}}} e^{2\pi i\left\langle v,\sqrt{n}\zeta_c-\frac{\Phi(\gamma)}{\sqrt{n}}\right\rangle} e^{-2\pi^2\langle v,\Sigma v\rangle}\left(1+R_n(v)\right) dv +O\left(\frac{1}{n^{\frac{m+1}{2}}}\right).
\]
Now by \eqref{eq:thm:errorR}, provided $\e>0$ is small enough, the integral of the term with $R_n(v)$ can be made smaller than any $\e_0>0$. Coming back to \eqref{eq:thm:fourier}, we have proved
\[
\left| \mu_n(\gamma)-\frac{\pi(x)}{n^{\frac{m}{2}}}\int_{D_{\delta\sqrt{n}}} e^{2\pi i\br{v, \sqrt{n}\zeta_c-\frac{\Phi(\gamma)}{\sqrt{n}}}}e^{-2\pi^2\br{v, \Sigma v}}\,dv\right| \le \frac{\pi(x)\e_0}{n^{\frac{m}{2}}}+O\left(\frac{1}{n^{\frac{m+1}{2}}} \right)
\]
For a given $\e_0>0$, we choose $\e>0$ and $\delta>0$ such that \eqref{eq:thm:betaTaylor}, \eqref{eq:thm:errorR}, and \eqref{eq:betaepsilon} hold.

On the other hand, the Fourier transform directly shows that for $w \in \R^m$,
\[
\xi_{n\Sigma}(w-n\zeta_c)=\frac{1}{n^{\frac{m}{2}}}\int_{\R^m}e^{2\pi i\br{v, \sqrt{n}\zeta_c-\frac{w}{\sqrt{n}}}}e^{-2\pi^2\br{v, \Sigma v}}\,dv.
\]
There exists a constant $c_\delta'>0$, depending only on $\Sigma$, such that for all $\gamma \in \Gamma$ and all $n\ge 1$,
\[
\xi_{n\Sigma}(\Phi(\gamma)-n\zeta_c)
=
\frac{1}{n^{\frac{m}{2}}}\int_{D_{\delta \sqrt{n}}}
e^{2\pi i\br{v, \sqrt{n}\zeta_c-\frac{\Phi(\gamma)}{\sqrt{n}}}}e^{-2\pi^2\br{v, \Sigma v}}\,dv+O(e^{-c_\delta' n}).
\]

Summarizing the above estimates shows that for every $\e_0>0$ there exists a constant $C_{\e_0}$ such that for all large enough $n$, uniformly in $\gamma \in \Gamma$ with $[\gamma]=x$,
\begin{align*}
n^{\frac{m}{2}}|\mu_n(\gamma)-\pi(x)\xi_{n\Sigma}(\Phi(\gamma)-n\zeta_c)|
 \le \pi(x)\e_0+C_{\e_0}\left(n^{-\frac{1}{2}}+ n^{\frac{m}{2}}e^{-c_\delta' n}\right).
\end{align*}
This shows the proposition.
\qed

\proof[Proof of Theorem \ref{thm:lclt}]
Note that the covariance matrix $\Sigma$ is computed in Lemma \ref{lem:hessian}.
First we show the claim assuming that $\mu$ is aperiodic.
By Lemma \ref{lem:martingale}, for a constant $C$ for all $n\in \N$ and all $r>0$,
\begin{equation}\label{eq:thm:lclt:martingale}
\Prob\left(\|\Phi(w_n)-n\zeta_c\|\ge r\right)\le \frac{C n}{r^2}.
\end{equation}
Moreover,
a direct computation yields a constant $C'$ such that for all $n\ge 1$ and all $r>0$,
\begin{equation*}
\sum_{v \in \Z^m, \|v\|\ge r}\xi_{n \Sigma}(v)\le C' \exp\left(-\frac{r^2}{C'n}\right).
\end{equation*}
Hence we have
\begin{equation}\label{eq:thm:lclt:gaussian}
\sum_{\gamma \in \Gamma, \|\Phi(\gamma)-n\zeta_c\|\ge r}\pi([\gamma])\xi_{n\Sigma}(\Phi(\gamma)-n\zeta_c) \le C'\exp\left(-\frac{r^2}{C' n}\right) \le \frac{C'^2n}{r^2}.
\end{equation}
By Proposition \ref{prop:lclt}, \eqref{eq:thm:lclt:martingale} and \eqref{eq:thm:lclt:gaussian}, for $C'':=C+C'^2$, for every $\e>0$, for all large enough $n$, and for all $r>0$,
\begin{align*}
\sum_{\gamma \in \Gamma}|\mu_n(\gamma)-\pi([\gamma])\xi_{n\Sigma}(\Phi(\gamma)-n\zeta_c)|
\le
\#B(r)\frac{\e}{n^{\frac{m}{2}}}+\frac{C''n}{r^2}.
\end{align*}
The inequality follows by dividing the sum over $\gamma$ into the two parts where $\|\Phi(\gamma)-n\zeta_c\|$ is less than $r$ or at least $r$.
In the above, $\#B(r)$ denotes the number of $\gamma \in \Gamma$ with $\|\Phi(\gamma)-n\zeta_c\|<r$,
and $\#B(r)\le C_0 r^m$ for a constant $C_0$ independent of $n$ or $r$.
Thus, letting $C:=C_0+C''$, we obtain for $r=\e^{-\frac{1}{m+2}}\sqrt{n}$,
\[
\sum_{\gamma \in \Gamma}|\mu_n(\gamma)-\pi([\gamma])\xi_{n\Sigma}(\Phi(\gamma)-n\zeta_c)|
\le \frac{C_0 \e r^m}{n^\frac{m}{2}}+\frac{C''n}{r^2}
=C_0\e^\frac{2}{m+2}+C''\e^{\frac{2}{m+2}}=C\e^{\frac{2}{m+2}}.
\]
Furthermore, by the Poisson summation formula, for a constant $c>0$, for all $n\ge 1$,
\[
\sum_{v \in \Z^m}\xi_{n\Sigma}(v)=\sum_{v \in \Z^m}e^{-2\pi^2 n\br{v, \Sigma v}}=1+\sum_{v \in \Z^m\setminus \{0\}}e^{-2\pi^2 n\br{v, \Sigma v}}=1+O(n^{-1}e^{-cn}).
\]
By choosing  a bigger constant $C$, for every $\e>0$, for all large enough $n$,
\[
\|\mu_n-\Nc_{n, \Sigma, \zeta_c}\|_\TV=\frac{1}{2}\sum_{\gamma \in \Gamma}|\mu_n(\gamma)-\Nc_{n, \Sigma, \zeta_c}(\gamma)|\le C\e^{\frac{2}{m+2}}.
\]
This shows the claim if $\mu$ is aperiodic.

In general, if $\mu$ has period $q$, then $\mu_q$ is aperiodic.
Applying the above discussion to $\mu_q$ with the initial distribution $q^{-1}(\delta_{\id}+\mu+\cdots+\mu_{q-1})$ yields
\[
\Bigl\|q^{-1}\sum_{i=0}^{q-1}\mu_{qn+i}-q^{-1}\sum_{i=0}^{q-1}\Nc_{qn+i, \Sigma, \zeta_c}\Bigr\|_\TV \to 0 \quad \text{as $n \to \infty$}.
\]
Noting that for each $i=0, \dots, q-1$,
\[
\Bigl\|\Nc_{qn+i, \Sigma, \zeta_c}-\Nc_{qn, \Sigma, \zeta_c}\Bigr\|_\TV =O\Bigl(\frac{1}{\sqrt{n}}\Bigr),
\]
we conclude the proof.
\qed

\begin{remark}\label{rem:quantitative}
In Theorem \ref{thm:lclt},
if $\mu$ has finite exponential moment (e.g.\ finite support), is symmetric and aperiodic,
the proof is simplified and strengthened.
In this case, $\zeta_c=0$, the period $q=1$, and $\Prob(\|\Phi(w_n)\|\ge r) \le C\exp(-r^2/(Cn))$ for $n\ge 1$ and $r>0$ in Lemma \ref{lem:martingale}.
We have
\[
\left\|\mu_n-\Nc_{n, \Sigma, 0}\right\|_\TV=O\left(\frac{(\log n)^{m/2}}{n^{1/2}}\right),
\]
as $n\to \infty$.
This follows since $\e_0$ can be chosen $O(n^{-1})$ in the proof of Proposition~\ref{prop:lclt} (using Lemma \ref{lem:pressure} (4) and the Taylor theorem on $\beta(u)$ up to the fourth order) and $r=A\sqrt{n\log n}$ for a large enough constant $A$ in the proof of Theorem \ref{thm:lclt}.
This is done in full details in~\cite{t-ns} for affine Weyl groups.
We refrain from reproducing the whole argument in the virtually abelian case.
\end{remark}

\begin{remark}\label{rem:ks}
The local CLT in the form of Theorem \ref{thm:lclt} has been shown, cf.\ \cite[Theorem 5.2]{ks}.
See also history and background therein for random walks with internal degrees of freedom.
We have provided a proof adapted to the present setting with an explicit form of covariance.
\end{remark}

\section{Noise sensitivity problem}\label{sec:ns}

We discuss noise sensitivity problem on random walks on $\Gamma$
by applying the local CLT established in Theorem \ref{thm:lclt}.

In fact, we first consider a more general setting.
For $i=1, 2$,
let $\Gamma_i$ be finitely generated infinite virtually abelian groups,
and $\Lambda_i$ be finite index normal subgroups of $\Gamma_i$ isomorphic to $\Z^{m_i}$ for some $m_i\ge 1$.
Further, let $\nu$ be a probability measure on $\Gamma_1\times \Gamma_2$. Denote $\mu^{(1)}$ and $\mu^{(2)}$ respectively the pushforwards to each factor.
We assume that $\nu$ is aperiodic and the support of $\nu$ generates $\Gamma_1\times \Gamma_2$ as a semigroup.
Note that $\mu^{(i)}$ is aperiodic and the support of $\mu^{(i)}$ generates $\Gamma_i$ as a semigroup for $i=1 ,2$.

Furthermore, 
$\nu$ has finite second moment if and only if both $\mu^{(i)}$ have finite second moment.
For $\Lambda_i$-equivariant map $\Phi_i:\Gamma \to \Lambda_i= \Z^{m_i}$,
let 
\[
\Phi: \Gamma_1 \times \Gamma_2 \to \Z^{m_1}\times \Z^{m_2}
\]
be the product map of $\Phi_1$ and $\Phi_2$.
This map $\Phi$ is $\Lambda_1\times \Lambda_2$-equivariant.
Let $(G, c)$ be the corresponding weighted diagram associated with $(\Gamma_1 \times \Gamma_2, \nu)$ and $\Lambda_1 \times \Lambda_2$.
We apply to $\Gamma_1 \times \Gamma_2$ and $\nu$ the whole discussion we have developed so far.

\subsection{The structure of covariance matrix related to transfer homomorphisms}\label{sec:structure} We apply the local CLT to random walks on a product of virtually abelian groups.

As in Section~\ref{sec:pre}, for $i=1,2$, we denote by $\vartheta_i:\Gamma_i \to \Lambda_i$ the transfer homomorphism, fix an identification $\Lambda_i$ with $\Z^{m_i}$ in $\R^{m_i}$, and consider the orthogonal decompositions:
\[
\R^{m_i}=\im (\uth_i) \oplus \im(\uth_i)^\perp,
\]
where $\uth_i$ is the linear operator on $\mathbb{R}^{m_i}$ which coincides with $\vartheta_i/\#F_i$ on $\Lambda_i$.

\begin{theorem}[Structure Theorem]\label{thm:structure}
Let $\nu$ be an aperiodic finite second moment probability measure on $\Gamma_1\times \Gamma_2$, with marginals $\mu^{(i)}$, and we keep the notations above. Then $\|\nu_n -\Nc_{n, \Sigma_\nu, \zeta_c}\|_{\TV} \to 0$ as $n \to \infty$ with $\zeta_c \in V_{\vartheta_1}\oplus V_{\vartheta_2}$ and the covariance matrix of the form
\[
\Sigma_\nu=
\left(\begin{array}{c|c}  \Sigma_{\mu^{(1)}} & 
\begin{matrix} \ast & 0 \\
0 & 0 \end{matrix}
 \\
 \hline
\begin{matrix} \ast & 0 \\
0 & 0 \end{matrix}
 &  \Sigma_{\mu^{(2)}} 
\end{array}\right)
\]
in a basis adapted to the decomposition $\R^{m_1}\times \R^{m_2}=\im (\uth_1) \oplus \im(\uth_1)^\perp \oplus \im (\uth_2) \oplus \im(\uth_2)^\perp$. 
\end{theorem}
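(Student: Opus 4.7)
The plan is to apply the local CLT (Theorem~\ref{thm:lclt}) to the pair $(\Gamma_1\times\Gamma_2,\nu)$, taking $\Lambda_1\times\Lambda_2$ as the finite-index abelian normal subgroup and $\Phi=(\Phi_1,\Phi_2)$ as the equivariant map. Aperiodicity, finite second moment, and generating support of $\nu$ yield $\|\nu_n-\Nc_{n,\Sigma_\nu,\zeta_c}\|_\TV\to 0$ immediately, with $\zeta_c$ and $\Sigma_\nu$ expressed via harmonic $1$-forms on the weighted diagram $(G,c)$ whose vertex set is $F_1\times F_2$ and whose stationary measure $\pi$ is uniform. The remaining task is to read off the claimed structure from the explicit formulas in Lemma~\ref{lem:hessian}, via Lemma~\ref{lem:diagonal} applied to each marginal.

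The key step, and the main obstacle, is to show that the harmonic decomposition on $G$ respects the product structure. Given $v_1'\in\R^{m_1}$ and writing $v=(v_1',0)$, the potential $\wh v((x_1,x_2),(s_1,s_2))=\br{v_1',\alpha_1(x_1,s_1)}$ depends only on the first factor. I would claim that its harmonic part $U_1$ on $G$ is the pull-back $U_1(e):=u_1'(e_1)$ of the harmonic part $u_1'$ of $\wh{v_1'}$ on the marginal diagram $G_1$ of $(\Gamma_1,\mu^{(1)})$. Verification is a direct computation: since $\sum_{s_2}\nu(s_1,s_2)=\mu^{(1)}(s_1)$ and both $\pi$ and the stationary measure $\pi_1$ on $G_1$ are uniform, one obtains $d^*U_1(x_1,x_2)=d_1^*u_1'(x_1)$ and $\dr{c}{U_1}=\dr{c_1}{u_1'}$. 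Hence harmonicity on $G_1$ transfers to harmonicity on $G$, and the uniqueness in Lemma~\ref{lem:decomposition} identifies $U_1$ as the harmonic part of $\wh v$ on $G$. The symmetric statement holds for $v=(0,v_2')$.

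Granted this factorization, the three structural claims are routine applications of Lemma~\ref{lem:diagonal}. For the drift, projecting $\zeta_c$ to the first factor gives $\zeta_{c_1}$, the drift of the marginal $\mu^{(1)}$-walk, which lies in $\mathrm{span}(\mathrm{im}(\vartheta_1))=V_{\vartheta_1}$ by Lemma~\ref{lem:diagonal}; symmetrically for the second factor, so $\zeta_c\in V_{\vartheta_1}\oplus V_{\vartheta_2}$. For the diagonal $(i,i)$-block, taking both $v_1,v_2$ in the $i$-th factor and using the factor-respecting harmonic part together with uniformity of $\pi$, the sums in Lemma~\ref{lem:hessian} collapse to the marginal covariance, giving $\Sigma_{\mu^{(i)}}$. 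For the off-diagonal block, take $v_1=(v_1',0)$ with $v_1'\in\mathrm{im}(\vartheta_1)^\perp$ and $v_2=(0,v_2')$ arbitrary; Lemma~\ref{lem:diagonal} yields $\sum_{x_1}u_1'(x_1,s_1)=0$ for all $s_1$ and $\dr{c}{U_1}=\br{v_1',\zeta_{c_1}}=0$, so the quadratic part
\[
\sum_{e\in E(G)} U_1(e)U_2(e)c(e) = \frac{1}{\#F_1\#F_2}\sum_{s_1,s_2}\nu(s_1,s_2)\Bigl(\sum_{x_1}u_1'(x_1,s_1)\Bigr)\Bigl(\sum_{x_2}u_2'(x_2,s_2)\Bigr)
\]
vanishes identically, while $\dr{c}{U_1}\dr{c}{U_2}=0$. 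The symmetric argument handles $v_2'\in\mathrm{im}(\vartheta_2)^\perp$, establishing the block form.
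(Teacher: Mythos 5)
Your proof is correct and follows essentially the same approach as the paper. The factorization of the harmonic part through the marginal diagram that you establish is exactly the paper's Fact~\ref{fact:u1}, and the subsequent computations (using Lemma~\ref{lem:diagonal} for the vanishing off-diagonal blocks and drift membership, and the marginal collapse for the diagonal blocks) mirror the paper's argument, with the paper deriving the drift claim slightly more directly from the cocycle identity $\sum_{x_i}\alpha_i(x_i,s_i)=\vartheta_i(s_i)$ rather than via projection and Lemma~\ref{lem:diagonal}.
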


In particular, when both $\Gamma_i$ admit no nonzero homomorphisms to $\Z$, the drift is zero. This is known by the general result of Karlsson and Ledrappier~\cite[Corollary 4]{Karlsson-Ledrappier}: Liouville  random walks on groups with no nonzero homomorphisms to $\Z$ have zero drift.

Prior to the proof, we record an important fact about harmonic decompositions in products. Recall that the edge set $E(G^{(1)}\times G^{(2)})$ is identified with $\Delta_1 \times S^{(1)}\times \Delta_2 \times S^{(2)}$. Edges will be simply denoted as tuples $(x_1,s_1,x_2,s_2)$.
The quotient diagram is denoted $\mb{F}:=(\Lambda_1\times \Lambda_2)\backslash (\Gamma_1\times \Gamma_2)=F_1\times F_2$.

\begin{fact}\label{fact:u1}
In the setting of Theorem~\ref{thm:structure}, for $v_1 \in \R^{m_1}$, we let $\mb{v}_1=(v_1,0) \in \R^{m_1}\times \R^{m_2}$. Denote their harmonic decompositions as
\[
\wh{v}_1=u_1+df_1 \quad \textrm{and} \quad \wh{\mb{v}}_1=\mb{u}_1+d\mb{f}_1.
\]
Then for all $(x_1,s_1,x_2,s_2) \in \Delta_1 \times S^{(1)}\times \Delta_2 \times S^{(2)}$,
\[
\mb{u}_1(x_1,s_1,x_2,s_2)=u_1(x_1,s_1) \quad \textrm{and} \quad \mb{f}_1(x_1,x_2)=f_1(x_1).
\]
\end{fact}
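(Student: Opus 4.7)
The plan is to exploit the uniqueness of the decomposition in Lemma~\ref{lem:decomposition}. I would define candidate pullbacks
\[
\tilde{\mb{u}}_1(x_1,s_1,x_2,s_2) := u_1(x_1,s_1), \qquad \tilde{\mb{f}}_1(x_1,x_2) := f_1(x_1),
\]
and then verify two things: first, that $\wh{\mb{v}}_1 = \tilde{\mb{u}}_1 + d\tilde{\mb{f}}_1$ on $G^{(1)}\times G^{(2)}$; and second, that $\tilde{\mb{u}}_1 \in H^1(G^{(1)}\times G^{(2)})$. Uniqueness then forces $\mb{u}_1 = \tilde{\mb{u}}_1$, and $\mb{f}_1 = \tilde{\mb{f}}_1$ up to an additive constant on $V(G)$ that may be absorbed in the normalization of the potential.

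The first identity is essentially formal. Since the $\Gamma_i$-actions on $\Delta_i$ are independent in the product, the cocycle splits coordinatewise: $\Phi_{(x_1,s_1,x_2,s_2)} = (\alpha_1(x_1,s_1),\alpha_2(x_2,s_2))$. Because $\mb{v}_1=(v_1,0)$, one has $\wh{\mb{v}}_1(x_1,s_1,x_2,s_2) = \langle v_1,\alpha_1(x_1,s_1)\rangle = \wh{v}_1(x_1,s_1)$, which is exactly $u_1(x_1,s_1) + df_1(x_1,s_1)$; and $df_1(x_1,s_1) = f_1(x_1 s_1)-f_1(x_1) = \tilde{\mb{f}}_1(x_1 s_1, x_2 s_2) - \tilde{\mb{f}}_1(x_1,x_2) = d\tilde{\mb{f}}_1(x_1,s_1,x_2,s_2)$.

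For the harmonicity of $\tilde{\mb{u}}_1$, I would use that both stationary measures are uniform: since $\Lambda_i$ is normal in $\Gamma_i$, the induced Markov chain on $F_i$ is a random walk and has uniform stationary distribution, and likewise for $F_1\times F_2$. Combined with the fact that $\nu$ has marginal $\mu^{(i)}$, the conductances satisfy $c(x_1,s_1,x_2,s_2) = \pi(x_1,x_2)\,\nu(s_1,s_2)$ with $\pi$ uniform, so summing out the second-factor coordinates yields
\[
d^*\tilde{\mb{u}}_1(x_1,x_2) \;=\; -\sum_{(s_1,s_2)\in S} \nu(s_1,s_2)\,u_1(x_1,s_1) \;=\; d^*u_1(x_1),
\]
and similarly $\dr{c}{\tilde{\mb{u}}_1} = \dr{c^{(1)}}{u_1}$. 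Since $u_1 \in H^1(G^{(1)})$, the harmonicity equation $d^*\tilde{\mb{u}}_1 + \dr{c}{\tilde{\mb{u}}_1} = 0$ follows immediately, and Lemma~\ref{lem:decomposition} concludes.

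There is no serious obstacle here; the entire argument hinges on the pair of marginalization identities in the previous paragraph, which in turn rest only on the fact that the induced walks on the finite quotients have uniform stationary distribution. This is where normality of $\Lambda_1\times\Lambda_2$ in $\Gamma_1\times\Gamma_2$ is used implicitly, and it is the only structural input beyond the coordinate-wise splitting of the cocycle.
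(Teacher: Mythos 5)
Your proposal is correct and takes essentially the same route as the paper: both verify that the coordinate-wise pullback of $u_1$ satisfies the harmonicity equation on the product diagram via the marginalization $\sum_{s_2}\nu(s_1,s_2)=\mu^{(1)}(s_1)$ together with uniformity of the stationary measure, and then invoke the uniqueness from Lemma~\ref{lem:decomposition}. The only difference is presentational — you name the candidate pair $(\tilde{\mb{u}}_1,\tilde{\mb{f}}_1)$ and spell out the uniqueness step, while the paper leaves it implicit.
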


\proof%[Proof of Fact~\ref{fact:u1}]
It is enough to check the decomposition
\begin{align*}
\wh{\mb{v}}_1(x_1,s_1,x_2,s_2)&=\langle \mb{v}_1,\Phi_{(x_1,s_1,x_2,s_2)} \rangle =\langle (v_1,0),(\Phi^{(1)}_{(x_1,s_1)},\Phi^{(2)}_{(x_2,s_2)}) \rangle \\
&=\langle v_1, \Phi^{(1)}_{(x_1,s_1)}\rangle = u_1(x_1,s_1)+df_1(x_1,s_1)
\end{align*}
and the harmonicity of $(x_1,s_1,x_2,s_2) \mapsto u_1(x_1,s_1)=:\mb{u}_1(x_1,s_1,x_2,s_2)$. Indeed, note that 
\begin{equation}\label{eq:conductance}
\mb{c}(x_1,s_1,x_2,s_2)=\pi(x_1,x_2)\nu(s_1,s_2)=\frac{1}{\#\mb{F}}\nu(s_1,s_2).
\end{equation} 
%and $\dr{c}{u_1}=0$ by Lemma \ref{lem:diagonal} as $v_1\in W_{\vartheta_1}$.
Thus, we have
\begin{align*}
d^*\mb{u}_1(x_1,x_2)&= -\sum_{e:oe=(x_1,x_2)}\frac{\mb{c}(e)}{\pi(x_1,x_2)} \mb{u}_1(x_1,s_1,x_2,s_2)
=-\sum_{s_1,s_2} \nu(s_1,s_2)u_1(x_1,s_1)  \\
&=-\sum_{s_1} \mu^{(1)}(s_1)u_1(x_1,s_1)=d^*u_1(x_1)
\end{align*}
and 
\begin{align}\label{eq:chicu}
\chi_{\mb{c}}(\mb{u}_1) &= \sum_{e \in E(G)} \mb{u}_1(e)\mb{c}(e) =\frac{1}{\#\mb{F}}\sum_{x_1,s_1,x_2,s_2} u_1(x_1,s_1)\nu(s_1,s_2) \nonumber \\
&=\frac{1}{\#\mb{F}}\sum_{s_1,s_1} u_1(x_1,s_1)\mu^{(1)}(s_1)\#F_2 =\chi_c(u_1).
\end{align}
Then $d^*\mb{u}_1(x_1,x_2)+\chi_{\mb{c}}(\mb{u}_1)=d^*u_1(x_1)+\chi_c(u_1)=0$ as $u_1$ is harmonic for $\mu^{(1)}$.
\qed

\proof[Proof of Theorem~\ref{thm:structure}]
The convergence is given by the local CLT (Theorem \ref{thm:lclt}). 
Letting $\alpha_i$ denote the cocycle in the definition of $\vartheta_i:\Gamma_i \to \Lambda_i$ for $i=1, 2$,
we define $\alpha(x_1,x_2,s_1,s_2)=(\alpha_1(x_1,s_1), \alpha_2(x_2,s_2))$.
Using \eqref{eq:conductance}, the drift is computed as
\begin{align*}
\zeta_{\mb{c}}&=\sum_{e\in E(G)}\mb{c}(e)\Phi_e=\sum_{x_1,s_1,x_2,s_2}\frac{1}{\#\mb{F}}\nu(s_1,s_2)\alpha(x_1,s_1,x_2,s_2) \\
&=\frac{1}{\#\mb{F}}\sum_{s_1,s_2} \nu(s_1,s_2)\left(\sum_{x_2, x_1}\alpha_1(x_1,s_1), \sum_{x_1, x_2}\alpha_2(x_2,s_2) \right).
\end{align*}
It belongs to $\mathrm{im}(\uth_1)\oplus  \mathrm{im}(\uth_2)$ since $\sum_{x_i}\alpha_i(x_i,s_i)=\vartheta_i(s_i)$.

To get the zeros in the covariance matrix, consider $\mb{v}_1=(v_1, 0)$ for $v_1 \in \mathrm{im}(\uth_1)^\perp \subset \R^{m_1}$ and $\mb{v}_2=(0, v_2)$ for $v_2 \in \R^{m_2}$ and compute, using the explicit form in Theorem~\ref{thm:lclt}, Fact~\ref{fact:u1}, and  equation (\ref{eq:chicu})
\begin{align}\label{eq:cov}
\langle \mb{v}_1, \Sigma_\nu \mb{v}_2\rangle &= \sum_{e\in E(G)}\mb{u}_1(e)\mb{u}_2(e) \mb{c}(e)-\left(\sum_{e\in E(G)}\mb{u}_1(e)\mb{c}(e)\right)\left(\sum_{e\in E(G)}\mb{u}_2(e)\mb{c}(e)\right) \nonumber\\
&= \sum_{x_1, x_2,s_1, s_2}u_1(x_1, s_1)u_2(x_2, s_2)\frac{\nu(s_1,s_2)}{\#\mb{F}} -\chi_{{c}}({{u}_1})\chi_{{c}}({{u}_2}). 
\end{align}
Reordering the sums,
we have
\[
\sum_{x_1, x_2,s_1, s_2}u_1(x_1, s_1)u_2(x_2, s_2)\frac{\nu(s_1,s_2)}{\#\mb{F}}=\sum_{x_2,s_1,s_2} u_2(x_2,s_2)\frac{\nu(s_1,s_2)}{\#\mb{F}}\sum_{x_1} u_1(x_1,s_1)=0,
\]
where the last equality follows from Lemma \ref{lem:diagonal} as $v_1 \in \mathrm{im}(\uth_1)^\perp$. Moreover $\chi_{{c}}({{u}_1})=0$ by Lemma~\ref{lem:diagonal}.
The same computation holds exchanging the roles of first and second group factors.
\qed

\medskip

The following corollary of Theorem \ref{thm:structure} gives Theorem~\ref{thm:zerohom_intro} from the introduction when the groups are infinite.

\begin{corollary}\label{thm:zerohom}
For $i=1, 2$, let $\Gamma_i$ be a finitely generated infinite virtually abelian group,
and $\nu$ be an aperiodic probability measure on $\Gamma_1\times \Gamma_2$ with finite second moment such that the support of $\nu$ generates $\Gamma_1\times \Gamma_2$ as a semigroup and $\nu$ has the marginal $\mu^{(i)}$ on $\Gamma_i$.
If~$\Gamma_1$ admits no nonzero homomorphisms onto $\Z$,
then 
\[
\big\|\nu_n-\mu^{(1)}_n\times \mu^{(2)}_n\big\|_\TV \to 0 \quad \text{as $n \to \infty$}.
\]
\end{corollary}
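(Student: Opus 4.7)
The plan is to combine the Structure Theorem (Theorem~\ref{thm:structure}) applied to $\nu$ with the local CLT (Theorem~\ref{thm:lclt}) applied separately to each marginal $\mu^{(i)}$, then argue via the triangle inequality that the joint Gaussian approximation of $\nu_n$ factorizes as a product of the marginal Gaussian approximations.

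First, I would check that the hypotheses of Theorem~\ref{thm:lclt} are inherited by each marginal. Finite second moment passes under projection; generation of $\Gamma_i$ as a semigroup by $\supp \mu^{(i)}$ is immediate by projecting words; and aperiodicity of $\mu^{(i)}$ follows from aperiodicity of $\nu$ because any non-trivial loop at $\id$ of length $n$ in $\Cay(\Gamma_1\times\Gamma_2,\supp\nu)$ projects to a loop of the same length at $\id$ in $\Cay(\Gamma_i,\supp\mu^{(i)})$. Next, the assumption that $\Gamma_1$ admits no nonzero homomorphism onto $\Z$ forces the transfer $\vartheta_1:\Gamma_1\to \Lambda_1\cong \Z^{m_1}$ to vanish identically: otherwise, composing with any coordinate projection of $\Z^{m_1}$ carrying a nonzero value would produce a nonzero homomorphism $\Gamma_1\to \Z$. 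Consequently $\im(\uth_1)=\{0\}$ and $\im(\uth_1)^\perp=\R^{m_1}$.

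Now I would invoke Theorem~\ref{thm:structure} for $\nu$. Since $\im(\uth_1)^\perp$ is all of $\R^{m_1}$, the entire off-diagonal block of $\Sigma_\nu$ coupling the two factors vanishes, so $\Sigma_\nu$ is block-diagonal with diagonal blocks $\Sigma_{\mu^{(1)}}$ and $\Sigma_{\mu^{(2)}}$ (the identification of the diagonal blocks with the marginal covariances follows from Fact~\ref{fact:u1} and the same summation computation used in the proof of the structure theorem). Moreover, the drift satisfies $\zeta_{\mb c}\in V_{\vartheta_1}\oplus V_{\vartheta_2}=\{0\}\oplus V_{\vartheta_2}$, and a direct reordering of the defining sum identifies its second component with $\zeta_{c^{(2)}}$. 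Applied also to $\mu^{(1)}$ itself, Lemma~\ref{lem:diagonal} yields $\zeta_{c^{(1)}}=0$.

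The key remaining point is to check that a block-diagonal covariance together with a split drift produce a product Gaussian on $\Gamma_1\times\Gamma_2$, namely
\[
\Nc_{n,\Sigma_\nu,\zeta_{\mb c}}=\Nc_{n,\Sigma_{\mu^{(1)}},0}\times \Nc_{n,\Sigma_{\mu^{(2)}},\zeta_{c^{(2)}}}.
\]
This uses three elementary facts: the uniform distribution on $F_1\times F_2$ factorizes as the product of uniform distributions on $F_1$ and $F_2$; the Gaussian density $\xi_{n\Sigma_\nu}$ splits into $\xi_{n\Sigma_{\mu^{(1)}}}\cdot \xi_{n\Sigma_{\mu^{(2)}}}$ because the covariance is block-diagonal and the drift decouples; and the lattice normalizing constant factorizes by Fubini, $\sum_{w\in\Z^{m_1+m_2}}\xi_{n\Sigma_\nu}(w)=\sum_{w_1\in\Z^{m_1}}\xi_{n\Sigma_{\mu^{(1)}}}(w_1)\cdot \sum_{w_2\in\Z^{m_2}}\xi_{n\Sigma_{\mu^{(2)}}}(w_2)$. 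Combining Theorem~\ref{thm:structure} for $\nu_n$, the factorization above, the local CLT applied to each marginal, and the subadditivity $\|\mu_1\times\mu_2-\nu_1\times\nu_2\|_\TV\le \|\mu_1-\nu_1\|_\TV+\|\mu_2-\nu_2\|_\TV$, the triangle inequality delivers the corollary. The main (very mild) obstacle is the bookkeeping in Step~3 of matching the diagonal blocks of $\Sigma_\nu$ with the marginal covariances; everything else is a routine application of the machinery already established.
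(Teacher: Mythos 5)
Your proposal is correct and follows the same basic strategy as the paper: invoke Theorem~\ref{thm:structure} to see that $\vartheta_1=0$ forces $\Sigma_\nu$ to be block-diagonal with diagonal blocks $\Sigma_{\mu^{(1)}}$, $\Sigma_{\mu^{(2)}}$ and drift supported only on the second factor, then compare $\nu_n$ and $\mu^{(1)}_n\times\mu^{(2)}_n$ through the common Gaussian approximation via the local CLT and the triangle inequality. The only (cosmetic) difference is the final step: the paper applies Theorem~\ref{thm:lclt} a second time directly to the product measure $\mu^{(1)}\times\mu^{(2)}$ on $\Gamma_1\times\Gamma_2$ and notes it yields the same $\Nc_{n,\Sigma_\nu,\zeta_c}$, whereas you apply Theorem~\ref{thm:lclt} to each marginal $\mu^{(i)}$ on $\Gamma_i$ separately, factorize $\Nc_{n,\Sigma_\nu,\zeta_c}$ into a product of marginal Gaussians, and conclude with subadditivity of total variation under products; both routes are valid and equally short.
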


\proof%[Proof of Corollary~\ref{thm:zerohom}]
The transfer homomorphism of $\Gamma_1$ vanishes so the vector spaces $\mathrm{Im}(\uth_1)$ is zero. Theorem~\ref{thm:structure} shows that $\Sigma_\nu$ is a block diagonal symmetric matrix along the decomposition $\R^{m_1} \times \R^{m_2}$.
Two blocks are the covariance matrices for $\Sigma_{\mu^{(i)}}$. The other blocks are only zero. Therefore, $\Sigma_\nu$ coincides with the covariance matrix for the $(\mu^{(1)}\times \mu^{(2)})$-random walk on $\Gamma_1\times \Gamma_2$.
The local CLT (Theorem~\ref{thm:lclt}) applied to $\nu$ and $\mu^{(1)}\times \mu^{(2)}$
yields by the triangle inequality,
\[
\big\|\nu_n-\mu^{(1)}_n\times\mu^{(2)}_n\big\|_\TV \le \big\|\nu_n-\Nc_{n, \Sigma_\nu, 0}\big\|_\TV+\big\|\mu^{(1)}_n\times \mu^{(2)}_n-\Nc_{n, \Sigma_\nu, 0}\big\|_\TV \to 0 \quad \text{as $n \to \infty$}.
\]
This concludes the claim.
\qed

\proof[Proof of Theorem~\ref{thm:zerohom_intro}]
First, in the case when both groups are infinite, we apply Corollary~\ref{thm:zerohom}.
Next, in the case when one group $\Gamma$ is infinite and the other group is finite $G$, we apply Theorem \ref{thm:lclt} to $\Gamma \times G$.
Note that $\Nc_{n, \Sigma, \zeta_c}$ is defined as the product of the uniform distribution on $F \times G$ and the discrete Gaussian distribution on $\Lambda$, where $\Lambda\backslash \Gamma=F$.
Finally, in the case when both groups are finite, both $\nu_n$ and $\mu_n^{(1)}\times \mu_n^{(2)}$ tend to the uniform distribution on the product group.
\qed

\subsection{Applications to noise sensitivity}

Recall that noise sensitivity of the $\mu$-random walk on a group $\Gamma$ is concerned with the measures $\pi^\rho=\rho (\mu\times \mu)+(1-\rho)\mu_\diag$ for $\rho \in [0,1]$ on $\Gamma\times\Gamma$.
As a special case of Corollary~\ref{thm:zerohom}, we obtain the following, which is the main step towards Theorem~\ref{thm:intro}.

\begin{corollary}\label{cor:ns}
Let $\Gamma$ be a finitely generated infinite virtually abelian group and $\mu$ be an aperiodic probability measure on $\Gamma$ with finite second moment such that the support generates the group as a semigroup.
If $\Gamma$ admits no nonzero homomorphism onto $\Z$,
then
\[
\left\|\pi^\rho_n-\mu_n\times \mu_n\right\|_\TV \to 0 \quad \text{as $n\to \infty$},
\]
for all $\rho \in (0, 1]$, i.e.\ the $\mu$-random walk on $\Gamma$ is noise sensitive in total variation.
\end{corollary}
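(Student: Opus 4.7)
The plan is to deduce this corollary by a direct application of Corollary~\ref{thm:zerohom} (equivalently Theorem~\ref{thm:zerohom_intro}), specialized to the pair $(\Gamma_1, \Gamma_2) = (\Gamma, \Gamma)$ with coupling $\nu := \pi^\rho$ on $\Gamma \times \Gamma$, for each fixed $\rho \in (0, 1]$. By construction $\pi^\rho$ has both marginals equal to $\mu$; combined with the hypothesis that $\Gamma_1 = \Gamma$ admits no nonzero homomorphism onto $\Z$, Corollary~\ref{thm:zerohom} yields precisely the desired
\[
\bigl\|\pi^\rho_n - \mu_n \times \mu_n\bigr\|_\TV \to 0 \quad \text{as } n \to \infty.
\]
The case $\rho = 1$ is anyway trivial since $\pi^1 = \mu \times \mu$.

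Thus the proof reduces to checking that $\pi^\rho$ satisfies the three structural hypotheses of Corollary~\ref{thm:zerohom}: finite second moment, aperiodicity, and semigroup generation of $\Gamma \times \Gamma$ by $\supp(\pi^\rho)$. The first is straightforward: fixing a word metric on $\Gamma \times \Gamma$ dominated by the sum of coordinatewise word metrics, the second moment of $\pi^\rho = \rho(\mu \times \mu) + (1-\rho)\mu_{\diag}$ is controlled by a linear combination of $\sum_{\gamma}|\gamma|^2 \mu(\gamma) < \infty$ contributions. For aperiodicity I would use that $\supp(\pi^\rho) \supset \supp(\mu) \times \supp(\mu)$: aperiodicity of $\mu$ gives an integer $N$ with $\mu_n(\id) > 0$ for all $n \ge N$, whence $\pi^\rho_n(\id, \id) \ge \rho^n \mu_n(\id)^2 > 0$ for $n \ge N$, producing loops at $(\id, \id)$ of lengths $N$ and $N+1$ and hence period one.

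I expect the main obstacle to be the semigroup generation of $\Gamma \times \Gamma$ by $\supp(\pi^\rho)$, which is the only place where aperiodicity of $\mu$ is genuinely needed beyond supplying a hypothesis of Corollary~\ref{thm:zerohom}. Given $(g, h) \in \Gamma \times \Gamma$, I would write $g = s_1 \cdots s_p$ and $h = t_1 \cdots t_q$ as products of elements of $\supp(\mu)$ and then equalize the two lengths using that aperiodicity of $\mu$ supplies, for every sufficiently large $k$, a factorization $\id = r_1 \cdots r_k$ with $r_i \in \supp(\mu)$; appending such a null word to whichever of the two sides is shorter aligns the lengths, and coordinatewise pairing of the resulting equal-length words expresses $(g, h)$ as a product of elements of $\supp(\mu) \times \supp(\mu) \subset \supp(\pi^\rho)$. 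Once these three hypotheses are verified, Corollary~\ref{thm:zerohom} applies and concludes the proof.
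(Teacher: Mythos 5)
Your proposal is correct and follows the paper's own route exactly: verify that $\pi^\rho$ satisfies the hypotheses of Corollary~\ref{thm:zerohom} (finite second moment, aperiodicity, and semigroup generation of $\Gamma\times\Gamma$ by $\supp\pi^\rho=\supp\mu\times\supp\mu$) and apply it with $\Gamma_1=\Gamma_2=\Gamma$ and $\nu=\pi^\rho$. You merely spell out the semigroup-generation and aperiodicity checks that the paper states without detail; one tiny refinement to the length-equalization step is that you may need to pad \emph{both} coordinates (not just the shorter one) to reach a common length at least the aperiodicity threshold, but that is immediate.
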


\proof
For each $\rho \in (0, 1]$, we have $\supp \pi^\rho=\supp \mu\times \supp \mu$, and the support of $\pi^\rho$ generates $\Gamma\times \Gamma$ as a semigroup since $\mu$ is aperiodic and $\supp \mu$ generates $\Gamma$ as a semigroup by assumption.
Furthermore, $\pi^\rho$ is aperiodic since $\mu$ is aperiodic, and $\pi^\rho$ has finite second moment since $\mu$ has finite second moment by assumption.
Applying Corollary~\ref{thm:zerohom} with $\nu=\pi^\rho$ for each $\rho \in (0, 1]$, $\Gamma_i=\Gamma$, and $\mu_i=\mu$ for $i=1, 2$ yields the claim.
\qed

\medskip

The local CLT implies the following corollary.

\begin{corollary}\label{cor:rho1}
Let $\Gamma$ be a finitely generated virtually abelian group and $\mu$ be a finite second moment probability measure on $\Gamma$ such that the support generates the group as a semigroup.
Then
\[
\lim_{\rho \to 1}\limsup_{n \to \infty}\left\|\pi^\rho_n-\mu_n\times \mu_n\right\|_\TV=0 
\]
\end{corollary}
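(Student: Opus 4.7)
The plan is to apply the local CLT (Theorem~\ref{thm:lclt}) to both $\pi^\rho$ and $\pi^1=\mu\times \mu$ on the virtually abelian group $\Gamma\times \Gamma$, and then to exploit continuity of the resulting covariance matrix in $\rho$ to compare the two Gaussian approximations when $\rho$ is close to~$1$. (If $\Gamma$ is finite the statement is immediate: for each $\rho\in(0,1]$ both $\pi^\rho_n$ and $\mu_n\times \mu_n$ are supported on and converge to the uniform distribution on a common coset of $\Gamma\times \Gamma$, and so $\limsup_n\|\pi^\rho_n-\mu_n\times \mu_n\|_\TV=0$ already.)

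For infinite~$\Gamma$, first I would verify the hypotheses of Theorem~\ref{thm:lclt} for $\pi^\rho$ on $\Gamma\times \Gamma$ with finite index normal subgroup $\Lambda\times \Lambda$. For every $\rho\in(0,1]$ the measure $\pi^\rho$ has finite second moment, its support generates $\Gamma\times \Gamma$ as a semigroup, and its period equals the period $q$ of $\mu$: any directed loop at $(\id,\id)$ projects to loops at $\id$ in each factor (so return lengths lie in $q\Z$), while pure-diagonal loops realize every multiple of $q$. Crucially, both marginals of $\pi^\rho$ equal $\mu$, so the drift vector $\zeta_{\pi^\rho}=(\zeta_c,\zeta_c)$ is independent of $\rho$.

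Next I would show that $\rho\mapsto \Sigma_{\pi^\rho}$ is continuous on $(0,1]$. The conductance $c_\rho$ of the weighted diagram of $\pi^\rho$ depends linearly on $\rho$, and the stationary distribution on $F\times F$ is uniform for every $\rho\in(0,1]$ since the right-multiplication chain on $F\times F$ is doubly stochastic. By Remark~\ref{rem:continuity}, the harmonic part of each relevant $1$-form $\widehat v$ then varies continuously in $\rho$, and the explicit formula for $\Sigma$ in Theorem~\ref{thm:lclt} delivers continuity of $\Sigma_{\pi^\rho}$. At $\rho=1$ the two factor walks are independent and $\Sigma_{\pi^1}$ is block diagonal with both diagonal blocks equal to $\Sigma_\mu$, which is the special case of Theorem~\ref{thm:structure} where the off-diagonal $\ast$-blocks vanish.

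The triangle inequality then bounds $\|\pi^\rho_n-\mu_n\times \mu_n\|_\TV$ by
\[
\|\pi^\rho_n-\Nc_{n,\Sigma_{\pi^\rho},\zeta}\|_\TV+\|\Nc_{n,\Sigma_{\pi^\rho},\zeta}-\Nc_{n,\Sigma_{\pi^1},\zeta}\|_\TV+\|\Nc_{n,\Sigma_{\pi^1},\zeta}-\mu_n\times \mu_n\|_\TV,
\]
with $\zeta=(\zeta_c,\zeta_c)$. The first and third terms tend to $0$ as $n\to \infty$ for each fixed $\rho$ by Theorem~\ref{thm:lclt}. The main obstacle is the middle term, which must be controlled \emph{uniformly} in $n$ as $\rho\to 1$. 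I would handle it by the substitution $v=u/\sqrt n$: the discrete sum defining this total variation becomes a Riemann sum on the rescaled lattice $\tfrac{1}{\sqrt n}\Z^{2m}$, whose limit as $n\to\infty$ equals $\|N(0,\Sigma_{\pi^\rho})-N(0,\Sigma_{\pi^1})\|_\TV$, a quantity independent of $n$, continuous in $\Sigma_{\pi^\rho}$, and vanishing at $\Sigma_{\pi^\rho}=\Sigma_{\pi^1}$. Uniformity of this Riemann sum approximation on a neighborhood of $\rho=1$, combined with $\Sigma_{\pi^\rho}\to\Sigma_{\pi^1}$, then yields $\lim_{\rho\to 1}\limsup_n\|\pi^\rho_n-\mu_n\times \mu_n\|_\TV=0$.
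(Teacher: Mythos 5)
Your strategy is essentially the paper's: apply the local CLT (Theorem~\ref{thm:lclt}) to $\pi^\rho$ and to $\mu\times\mu$ on $\Gamma\times\Gamma$, note that the drift is the same for both and that $\Sigma_{\pi^\rho}\to\Sigma_{\pi^1}$ as $\rho\to 1$ by Remark~\ref{rem:continuity}, and control the $\TV$ distance between the two discrete Gaussian approximations via a Riemann-sum limit equal to $\tfrac12\|\xi_{\Sigma^\rho}-\xi_{\Sigma^1}\|_{L^1}$. One additional remark: your final sentence worries about uniformity in $\rho$ of the Riemann-sum approximation, but this is not needed --- for each fixed $\rho$ the $\limsup_n$ is exactly $\tfrac12\|\xi_{\Sigma^\rho}-\xi_{\Sigma^1}\|_{L^1}$, and then one sends $\rho\to1$ using continuity.

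There is, however, one genuine gap as written. You claim that ``the first and third terms tend to $0$ as $n\to\infty$ for each fixed $\rho$ by Theorem~\ref{thm:lclt},'' but when the period $q$ of $\mu$ (and hence of $\pi^\rho$) is larger than~$1$, Theorem~\ref{thm:lclt} only asserts convergence of the Ces\`aro average $q^{-1}\sum_{i=0}^{q-1}\mu_{qn+i}$ to the Gaussian, not of the individual $\mu_n$. So $\|\pi^\rho_n-\Nc_{n,\Sigma_{\pi^\rho},\zeta}\|_\TV$ need not go to~$0$. You correctly identified the period $q$ earlier, but your triangle inequality quietly drops it. The paper fills this gap with an extra observation: because $\supp\pi^\rho=\supp\mu\times\supp\mu$, the measures $\pi^\rho_{qn+i}$ for $i=0,\dots,q-1$ have pairwise disjoint supports (and likewise for $\mu_{qn+i}\times\mu_{qn+i}$), so the $\TV$ distance for a single time $qn+i$ equals the $\TV$ distance for the sum over $i$ divided appropriately, and the local CLT can then be applied to the Ces\`aro average. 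Without this step, the argument as you wrote it fails for periodic $\mu$.
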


\proof
Note that if $\mu$ has period $q$, then $\pi^\rho$ has period $q$ for all $\rho \in (0, 1]$ since $\supp \pi^\rho=\supp \mu\times \supp \mu$.
Thus, $\pi^\rho_{nq+i}$ and $\mu_{qn+i}\times \mu_{qn+i}$ have the common support, and $\pi^\rho_{nq+i}$, $i=0, 1, \dots, q-1$, have disjoint supports.
Therefore we have
\[\sum_{i=0}^{q-1}\left\|\pi^\rho_{nq+i}-\mu_{qn+i}\times \mu_{qn+i}\right\|_\TV
=
\left\|\sum_{i=0}^{q-1}\pi^\rho_{nq+i}-\sum_{i=0}^{q-1}\mu_{qn+i}\times \mu_{qn+i}\right\|_\TV.
\]
For each $\rho \in (0, 1]$,
the local CLT (Theorem \ref{thm:lclt}) implies that 
\begin{equation}\label{eq:thm:nonzerohom:lclt}
\left\|\sum_{i=0}^{q-1}\pi^\rho_{qn+i}-q\Nc_{qn, \Sigma^\rho, \zeta_c}\right\|_\TV \to 0 \quad \text{as $n \to \infty$}.
\end{equation}
In the above, $\zeta_c=\sum_{e \in E(G)}c(e)\Phi_e \in \Lambda\times \Lambda$ has the same component in each factor $\Lambda$, in particular, independent of $\rho \in (0, 1]$ since $\pi^\rho$ has marginals $\mu$ in both factors. 
Note that
\[
\left\|\Nc_{n, \Sigma^\rho, \zeta_c}-\Nc_{n, \Sigma^1, \zeta_c}\right\|_\TV=\left\|\Nc_{n, \Sigma^\rho, 0}-\Nc_{n, \Sigma^1, 0}\right\|_\TV.
\]
Furthermore, we have
\begin{equation}\label{eq:limit_gaussian}
\lim_{n\to \infty}\left\|\Nc_{n, \Sigma^\rho, 0}-\Nc_{n, \Sigma^1, 0}\right\|_\TV=\frac{1}{2}\|\xi_{\Sigma^\rho}-\xi_{\Sigma^1}\|_{L^1(\R^{2m})}.
\end{equation}
Before we show \eqref{eq:limit_gaussian},
let us conclude the claim.
By \eqref{eq:limit_gaussian}, the triangle inequality, and \eqref{eq:thm:nonzerohom:lclt} for the case when $\rho \in (0, 1]$ and for the case when $\rho=1$,
we have
\begin{align}\label{eq:thm:nonzerohom:n}
\lim_{n \to \infty}q^{-1}\left\|\sum_{i=0}^{q-1}\pi^\rho_{nq+i}-\sum_{i=0}^{q-1}\mu_{qn+i}\times \mu_{qn+i}\right\|_\TV
&=\lim_{n\to \infty}\left\|\Nc_{qn, \Sigma^\rho, 0}-\Nc_{qn, \Sigma^1, 0}\right\|_\TV \nonumber\\
&=\frac{1}{2}\|\xi_{\Sigma^\rho}-\xi_{\Sigma^1}\|_{L^1(\R^{2m})}.
\end{align}
Furthermore, $\Sigma^\rho\to \Sigma^1$ as $\rho \to 1$ by Remark~\ref{rem:continuity} (see also the explicit form of covariance matrix provided by Proposition~\ref{thm:structurepi} in the case of semi-direct products).
This implies that the right hand side of \eqref{eq:thm:nonzerohom:n} tends to $0$ as $\rho \to 1$.

Let us show \eqref{eq:limit_gaussian}.
By the definition of $\Nc_{n, \Sigma^\rho, 0}$ (see, the one above Theorem \ref{thm:lclt}), there exists a constant $c>0$ such that for all large enough $n$,
\[
\|\Nc_{n, \Sigma^\rho, 0}-\Nc_{n, \Sigma^1, 0}\|_\TV
=\frac{1}{2}\sum_{x \in \Delta^2}\pi(x)\sum_{v \in \Z^{2m}}|\xi_{n\Sigma^\rho}(v)-\xi_{n\Sigma^1}(v)|+O(e^{-c n}).
\]
Note that there exists a constant $c>0$ such that for all real $\lambda>1$ and all integer $n\ge 1$, 
\[
\sum_{\|v\|> \lambda n^{1/2}}|\xi_{n\Sigma^\rho}(v)-\xi_{n\Sigma^1}(v)|=O(e^{-c\lambda^2})
\quad \text{and} \quad
\int_{\|v\| > \lambda n^{1/2}}|\xi_{n\Sigma^\rho}(v)-\xi_{n\Sigma^1}(v)|\,dv=O(e^{-c\lambda^2}).
\]
Furthermore, by estimating the gradients of the Gaussians, we have
\begin{align*}
\sum_{\|v\| \le \lambda n^{1/2}}|\xi_{n\Sigma^\rho}(v)-\xi_{n\Sigma^1}(v)|
=\int_{\|v\| \le \lambda n^{1/2}}|\xi_{n\Sigma^\rho}(v)-\xi_{n\Sigma^1}(v)|\,dv+O\left(\frac{1}{\sqrt{n}}\right).
\end{align*}
By change of variables $v\mapsto v/\sqrt{n}$, we have
\[
\int_{\R^{2m}}|\xi_{n\Sigma^\rho}(v)-\xi_{n\Sigma^1}(v)|\,dv=\int_{\R^{2m}}|\xi_{\Sigma^\rho}(v)-\xi_{\Sigma^1}(v)|\,dv=\|\xi_{\Sigma^\rho}-\xi_{\Sigma^1}\|_{L^1(\R^{2m})}.
\]
Summarizing the above estimates, first $n \to \infty$, and then $\lambda \to \infty$, we obtain \eqref{eq:limit_gaussian}.
\qed

\begin{corollary}\label{thm:nonzerohom}
Let $\Gamma$ be a finitely generated infinite virtually abelian group and $\mu$ be a finite second moment probability measure on $\Gamma$ such that the support generates the group as a semigroup.
If moreover $\Gamma$ admits a nonzero homomorphism onto $\Z$,
then
\[
\lim_{\rho \to 0}\liminf_{n \to \infty}\left\|\pi^\rho_n-\mu_n\times \mu_n\right\|_\TV=1,
\]
in particular, the $\mu$-random walk on $\Gamma$ is not noise sensitive in total variation.
\end{corollary}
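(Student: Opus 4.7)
The plan is to reduce the statement to a one-dimensional question on $\Z\times\Z$ by pushing forward under a nonzero homomorphism $\phi:\Gamma\to\Z$, and then to exhibit an explicit event on which $\pi^\rho_n$ and $\mu_n\times\mu_n$ differ maximally as $\rho\to 0$ and $n\to\infty$. Since total variation distance is non-increasing under the pushforward by $\phi\times\phi:\Gamma\times\Gamma\to\Z\times\Z$, it is enough to show
$$
\lim_{\rho\to 0}\liminf_{n\to\infty}\bigl\|(\phi\times\phi)_\ast\pi^\rho_n-(\phi_\ast\mu)_n\times(\phi_\ast\mu)_n\bigr\|_\TV=1.
$$

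First, I would verify that the variance $\sigma^2$ of the one-step law $\phi_\ast\mu$ on $\Z$ is strictly positive. Finiteness follows from $\mu$ having finite second moment together with the fact that $\phi$ is Lipschitz on $(\Gamma,|\cdot|)$. For positivity, if $\phi_\ast\mu$ were a Dirac mass $\delta_c$, then writing an arbitrary $\gamma\in\Gamma$ as a semigroup product $s_1\cdots s_n$ with $s_i\in \supp\mu$ would give $\phi(\gamma)=nc$. Expressing some $s^{-1}$ as such a product forces $(n+1)c=0$ in $\Z$, hence $c=0$, which in turn makes $\phi$ vanish on the semigroup generated by $\supp\mu$ and contradicts $\phi\neq 0$.

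Set $X_n:=\phi(w_n)$ and $Y_n:=\phi(w_n^\rho)$. Under $\pi^\rho$, the increments $\phi(\gamma_i)-\phi(\gamma_i^\rho)$ are i.i.d.\ of mean $0$ and variance $2\rho\sigma^2$: with probability $1-\rho$ the resampled step coincides with the original and contributes $0$, while with probability $\rho$ the difference of two independent copies of $\phi_\ast\mu$ contributes variance $2\sigma^2$. The classical one-dimensional CLT then yields $(X_n-Y_n)/\sqrt{n\rho}\Rightarrow Z$ with $Z\sim\mathcal{N}(0,2\sigma^2)$. Under $\mu_n\times\mu_n$ the increments are differences of independent copies of $\phi_\ast\mu$, so $(X_n-Y_n)/\sqrt{n}\Rightarrow Z$.

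For $K>0$, consider the event $A_n^{K,\rho}:=\{|X_n-Y_n|\le K\sqrt{n\rho}\}$. The two CLTs above give $\Prob_{\pi^\rho_n}(A_n^{K,\rho})\to \Prob(|Z|\le K)$ and $\Prob_{\mu_n\times\mu_n}(A_n^{K,\rho})\to \Prob(|Z|\le K\sqrt{\rho})$ as $n\to\infty$. The variational characterization of total variation then yields
$$
\liminf_{n\to\infty}\bigl\|\pi^\rho_n-\mu_n\times\mu_n\bigr\|_\TV\ge \Prob(|Z|\le K)-\Prob(|Z|\le K\sqrt{\rho}).
$$
Given $\epsilon>0$, choosing $K$ with $\Prob(|Z|\le K)\ge 1-\epsilon$ and sending $\rho\to 0$ forces the right-hand side above $1-\epsilon$; letting $\epsilon\to 0$ completes the proof. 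The only subtle step is the positivity of $\sigma^2$, which rests squarely on the semigroup-generation hypothesis; everything else is a standard one-dimensional CLT, which could alternatively be replaced by Theorem~\ref{thm:lclt} applied to $\Z^2$.
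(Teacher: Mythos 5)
Your proof is correct and takes a more self-contained route than the paper's. Both arguments rely on the same initial reduction (pushing forward under a homomorphism that collapses the virtually abelian structure, using that total variation does not increase under pushforwards), but the paper pushes forward to $\im(\vartheta)\times\im(\vartheta)$ via the transfer and then defers entirely to an external result, Theorem A.1 of~\cite{t-ns}, for the torsion-free abelian case. You instead fix an arbitrary nonzero homomorphism $\phi:\Gamma\to\Z$, project to the rank-one case $\Z\times\Z$, and give a direct proof via the classical CLT applied to the difference process $X_n-Y_n$, with the distinguishing event $\{|X_n-Y_n|\le K\sqrt{n\rho}\}$ separating the two laws in the limit $n\to\infty$, $\rho\to 0$. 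The two ingredients you supply that the paper leaves implicit (or delegates to~\cite{t-ns}) are genuine: the positivity of $\sigma^2=\Var(\phi_\ast\mu)$, which you deduce cleanly from the semigroup-generation hypothesis, and the variance scaling $2\rho\sigma^2$ for the rebonded increments. Your approach is more elementary and entirely within the present paper's toolkit (no appeal to the local CLT, Theorem~\ref{thm:lclt}, nor to external references), at the minor cost of re-deriving a fact the authors preferred to cite; it also makes transparent the key intuition that the two laws of $(X_n-Y_n)/\sqrt{n}$ concentrate at incompatible scales $\sqrt{\rho}$ and $1$ as $\rho\to 0$.
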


\proof
Considering the pushforward of $\pi^\rho$ from $\Gamma\times \Gamma$ to $\im(\vartheta)\times\im(\vartheta)$ via the product of transfer homomorphisms.
We consider the random walk on the finitely generated torsion free abelian group $\im(\vartheta)\times\im(\vartheta)$.
Note that the pushforward of $\pi^\rho$ has finite second moment, and that the projection does not increase total variation distances.
By \cite[Theorem A.1]{t-ns}, we have the first claim.

The second claim follows from the first by the definition of noise sensitivity.
\qed

\proof[Proof of Theorem \ref{thm:nonzerohom_intro}]
It follows from Corollary~\ref{cor:rho1} and Corollary~\ref{thm:nonzerohom}.
\qed

\proof[Proof of Theorem \ref{thm:intro}]
If $\Gamma$ is finite, then the $\mu$-random walk is noise sensitive in total variation \cite[Proposition 5.1]{bb}.
If $\Gamma$ is infinite, then the claim follows from Corollary \ref{cor:ns} and Corollary~\ref{thm:nonzerohom}.
\qed

\subsection{An explicit form of the structure theorem for semi-direct product groups}\label{sec:semi}
In the particular case of semi-direct product groups, Theorem~\ref{thm:structure} is more explicit. We use the notations above with $\Gamma_1=\Gamma_2=\Gamma$.

\begin{proposition}\label{thm:structurepi}
Assume that the virtually abelian group is a semi-direct product $\Gamma=\Lambda \rtimes F$. Let  $\mu$ be a finite second moment probability measure on $\Gamma$ such that the support generates the group as a semigroup. For the measure $\pi^\rho=\rho (\mu\times \mu)+(1-\rho)\mu_\diag$ on $\Gamma\times\Gamma$, we have
\[
\Sigma_{\pi^\rho}=
\left(\begin{array}{c|c} \Sigma_{\mu}    & 
\begin{matrix} (1-\rho)\Sigma_{\mu}|_{\mathrm{im}(\uth)\times \mathrm{im}(\uth)} & 0 \\
0 & 0 \end{matrix}
 \\
 \hline
\begin{matrix} (1-\rho)\Sigma_{\mu}|_{\mathrm{im}(\uth)\times \mathrm{im}(\uth)} & 0 \\
0 & 0 \end{matrix}
 &  \Sigma_{\mu} 
\end{array}\right)
\]
in a basis adapted to the decomposition $\R^{m_1}\times \R^{m_2}=\im (\uth_1) \oplus \im(\uth_1)^\perp \oplus \im (\uth_2) \oplus \im(\uth_2)^\perp$.
\end{proposition}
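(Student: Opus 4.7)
The plan is to reduce the statement to the computation of a single off-diagonal sub-block of $\Sigma_{\pi^\rho}$, invoking Theorem~\ref{thm:structure} to dispose of everything else for free. Applied to $\nu = \pi^\rho$ (whose two marginals on $\Gamma \times \Gamma$ are both $\mu$), that theorem gives diagonal blocks equal to $\Sigma_\mu$ and kills the off-diagonal block on the three sub-blocks involving at least one factor of $\im(\uth)^\perp$. What is left is therefore to compute $\langle \mb{v}_1, \Sigma_{\pi^\rho}\mb{v}_2\rangle$ for $\mb{v}_1 = (v_1, 0)$, $\mb{v}_2 = (0, v_2)$ with $v_1, v_2 \in \im(\uth)$, and to identify it with $(1-\rho)\langle v_1, \Sigma_\mu v_2\rangle$.

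For this I would feed the explicit covariance formula of Theorem~\ref{thm:lclt} through Fact~\ref{fact:u1}: the harmonic parts on the product diagram satisfy $\mb{u}_1(x_1, s_1, x_2, s_2) = u_1(x_1, s_1)$ and $\mb{u}_2(x_1, s_1, x_2, s_2) = u_2(x_2, s_2)$, where $u_i$ denotes the harmonic part of $\wh{v}_i$ on the single-factor diagram $(G, c)$ for $(\Gamma, \mu)$. Writing $\pi^\rho(s_1, s_2) = \rho\mu(s_1)\mu(s_2) + (1-\rho)\mu(s_1)\delta_{s_1, s_2}$ splits the ``second moment'' sum in the Theorem~\ref{thm:lclt} formula into a product part and a diagonal part. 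The product part factorises cleanly as $\rho\,\chi_c(u_1)\chi_c(u_2)$, and after subtracting the mean-correction $\chi_{\mb{c}}(\mb{u}_1)\chi_{\mb{c}}(\mb{u}_2) = \chi_c(u_1)\chi_c(u_2)$ (identity~\eqref{eq:chicu} in the proof of Fact~\ref{fact:u1}), its net contribution is $-(1-\rho)\chi_c(u_1)\chi_c(u_2)$.

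The decisive step, and where the semidirect product hypothesis genuinely enters, is Lemma~\ref{lem:ucst}: for $v_i \in \im(\uth)$, the harmonic parts $u_i(x, s)$ do not depend on $x \in \Delta \cong F$. Denoting the common value by $\wt{u}_i(s)$, this collapses the diagonal part to $(1-\rho)\sum_s \mu(s)\wt{u}_1(s)\wt{u}_2(s)$; the same observation also rewrites $\langle v_1, \Sigma_\mu v_2\rangle$ itself (applying Theorem~\ref{thm:lclt} on the single-factor diagram) as $\sum_s \mu(s)\wt{u}_1(s)\wt{u}_2(s) - \chi_c(u_1)\chi_c(u_2)$. Assembling the two pieces yields $\langle \mb{v}_1, \Sigma_{\pi^\rho}\mb{v}_2\rangle = (1-\rho)\langle v_1, \Sigma_\mu v_2\rangle$, i.e.\ the claimed top-left sub-block $(1-\rho)\Sigma_\mu|_{\im(\uth)\times\im(\uth)}$.

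I do not expect a genuine obstacle: the calculation is short once Theorem~\ref{thm:structure} has restricted attention to the single relevant block. The subtle point is that Lemma~\ref{lem:ucst} really does require $\Gamma$ to be a semidirect product with $\Delta$ a subgroup isomorphic to $F$; without this hypothesis, $u_i(x, s)$ need not be constant in $x$, and the diagonal part of the sum would not collapse to something expressible in terms of $\Sigma_\mu$ alone, leaving a residual covariance on the off-diagonal block that cannot be read off from the single-factor invariants.
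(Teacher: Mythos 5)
Your proposal is correct and follows essentially the same path as the paper's proof: invoke Theorem~\ref{thm:structure} for the diagonal blocks and the vanishing sub-blocks, then evaluate the single remaining off-diagonal sub-block via equation~\eqref{eq:cov}/Fact~\ref{fact:u1}, splitting $\pi^\rho$ into its product and diagonal parts and using Lemma~\ref{lem:ucst} (which, as you note, is where the semi-direct product hypothesis is used) to collapse the diagonal sum into $\langle v_1,\Sigma_\mu v_2\rangle$. Your bookkeeping of the $\chi_c$ cancellation and the $(1-\rho)$ factor matches the paper's computation line for line.
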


This proposition implies in particular that, regarding noise sensitivity, the underlying Euclidean space $\R^m$ can be decomposed into two complementary subspaces. On one of these subspaces, namely $\mathrm{im}(\uth)$, the effect of noise sensitivity is the same as on free abelian groups.
On the other subspace, that is $\mathrm{im}(\uth)^\perp$, the random walk behaves as noise sensitive.

Note also that as $\uth$ is a projection, we have
\[
 \uth^{\ast} \Sigma_\mu \uth=\left( \begin{matrix} \Sigma_{\mu}|_{\mathrm{im}(\uth)\times \mathrm{im}(\uth)} & 0 \\
0 & 0 \end{matrix}\right), \quad \textrm{so} \quad \Sigma_{\pi^\rho}=  \left( \begin{matrix} \Sigma_{\mu} & (1-\rho)\uth^{\ast} \Sigma_\mu \uth \\
(1-\rho)\uth^{\ast} \Sigma_\mu \uth & \Sigma_{\mu} \end{matrix}\right).
\]

\begin{proof}[Proof of Proposition \ref{thm:structurepi}]
Let $\mb{v}_i \in \mathrm{im}(\uth_i)$, for $i=1,2$. Using the definitions of $\pi^\rho$ and $\chi_c(u_i)=\sum_{x,s}u_i(x,s)\mu(s)/\#F$, we compute from equation (\ref{eq:cov})
\begin{align*}
\langle \mb{v}_1, \Sigma_\nu \mb{v}_2\rangle &=\sum_{x_1,s_1,x_2,s_2} u_1(x_1,s_1)u_2(x_2,s_2)\frac{\pi^\rho(s_1,s_2)}{(\#F)^2}-\chi_{{c}}({{u}_1})\chi_{{c}}({{u}_2}) \\
&=(1-\rho)\left(\sum_{x_1,x_2,s}u_1(x_1,s)u_2(x_2,s)\frac{\mu(s)}{(\#F)^2} -\chi_{{c}}({{u}_1})\chi_{{c}}({{u}_2})\right).
\end{align*}
By Lemma~\ref{lem:ucst}, we have $u_1$ and $u_2$ independent of $x$ so we get
\begin{align*}
\langle \mb{v}_1, \Sigma_\nu \mb{v}_2\rangle &=(1-\rho)\left(\sum_{x,s}u_1(x,s)u_2(x,s)\frac{\mu(s)}{\#F} -\chi_{{c}}({{u}_1})\chi_{{c}}({{u}_2})\right) \\
&=(1-\rho)\langle v_1,\Sigma_\mu v_2 \rangle = \langle v_1, \uth^{\ast} \Sigma_\mu \uth v_2 \rangle,
\end{align*}
as required.
\end{proof}

\subsection*{Acknowledgments}
The authors thank Professor Takuya Yamauchi for providing us a reference. J.B.\ is partially supported by ANR-22-CE40-0004 GoFR, ANR-24-CE40-3137 PLAGE and JSPS Invitational Fellowship L25508.
R.T.\ is partially supported by 
JSPS Grant-in-Aid for Scientific Research JP24K06711.

\bibliographystyle{plain}
%apalike
\bibliography{va}

\bigskip
\jb
\medskip
\rt

\end{document}